%!TEX TS-program = xelatex

\documentclass[12pt]{amsart}
%\linespread{2}

\usepackage{accents}
\usepackage{appendix}
\usepackage{amsfonts}
\usepackage{amsmath}
\usepackage{amssymb}	
\usepackage{amsthm}
\usepackage{array,booktabs,multirow}
\usepackage{braket}
\usepackage{cite}
\usepackage{centernot}
\usepackage{dsfont}
\usepackage{mathalfa}
\usepackage[shortlabels]{enumitem} 
\usepackage{etoolbox}
\usepackage{float}
\usepackage[hang, flushmargin]{footmisc}
\usepackage{latexsym}
\usepackage{lipsum}
\usepackage{needspace}
\usepackage{tikz}
\usepackage{hyperref}
\usetikzlibrary{matrix,arrows}  

\usepackage{tikz-cd}

\theoremstyle{plain}
\newtheorem{thm}{Theorem}[section]

\newtheorem{lem}[thm]{Lemma}
\newtheorem{prop}[thm]{Proposition}

\theoremstyle{definition}

\newtheorem{defn}[thm]{Definition}
\newtheorem{exam}[thm]{Example}

\theoremstyle{remark}
\def\BI{\mathfrak{BI}}

\setlist[enumerate,1]{leftmargin=2em}

\typeout{Substyle for letter-sized documents. Released 24 July 1992}

%PAGE LAYOUT
%\setlength{\parindent}{0in}
\setlength{\topmargin}{-1in}
\setlength{\headheight}{1.5cm}
\setlength{\headsep}{0.3cm}
\setlength{\textheight}{9in}
\setlength{\oddsidemargin}{0cm}
\setlength{\evensidemargin}{0cm}
\setlength{\textwidth}{6.5in}

\def\N{\mathbb N}
\def\F{\mathbb F}
\def\Z{\mathbb Z}

\def\e{\varepsilon}

\title[Finite-dimensional irreducible $\BI$-modules at ${\rm\,}\F=0$]{Finite-dimensional irreducible modules of the Bannai--Ito algebra at characteristic zero}

\author{Hau-Wen Huang}
\address{
Hau-Wen Huang\\
Department of Mathematics\\
National Central University\\
Chung-Li 32001 Taiwan
}
\email{hauwenh@math.ncu.edu.tw}

\begin{document}

\begin{abstract}
Assume that $\F$ is an algebraically closed with characteristic $0$. A central extension $\BI$ of the Bannai--Ito algebras is a unital associative $\F$-algebra generated by $X,Y,Z$ and the relations assert that each of 
\begin{gather*}
\{X,Y\}-Z, 
\qquad 
\{Y,Z\}-X,
\qquad 
\{Z,X\}-Y
\end{gather*}
is central in $\BI$. In this paper we classify the finite-dimensional irreducible $\BI$-modules up to isomorphism. As we will see the elements $X,Y,Z$ are not always diagonalizable on finite-dimensional irreducible $\BI$-modules.
\end{abstract}

\maketitle

{\footnotesize{\bf Keywords:} Bannai--Ito algebra, irreducible modules, universal property.}

\section{Introduction}

Throughout this paper, we adopt the following notations: 
Let $\F$ denote a field and let ${\rm char\,}\F$ denote the characteristic of $\F$. 
Let $\Z$ denote the ring of integers. Let $\N$ denote the set of nonnegative integers. 
Recall that the anticommutator $\{X,Y\}$ of two elements $X,Y$ in an algebra is defined by $\{X,Y\}=XY+YX$.

In this paper we consider a central extension of the Bannai--Ito algebras called the universal Bannai--Ito algebra and denoted by $\BI$. The algebra $\BI$ is a unital associative $\F$-algebra defined by generators and relations. The generators are $X,Y,Z$ and the relations state each of 
\begin{gather*}
\{X,Y\}-Z, 
\qquad 
\{Y,Z\}-X,
\qquad 
\{Z,X\}-Y
\end{gather*}
commutes with $X,Y,Z$. The concept of central extensions comes from \cite{uaw2011}. 
The Bannai--Ito algebras are the case $q=-1$ of the Askey--Wilson algebras \cite{lp&awrelation,hidden_sym} and the corresponding orthogonal polynomials were first known in \cite{BannaiIto1984}. In \cite{tvz2012}, the Bannai--Ito algebras 
were reintroduced to connect the Dunkl shift operators and the Bannai--Ito polynomials. Recently, the Bannai--Ito algebras and their representation theory have been explored in many other subjects such as the additive DAHA of type $(C_1^\vee,C_1)$ \cite{BI&NW2016,Huang:R<BImodules}, the Lie superalgebra $\mathfrak{osp}(1|2)$ \cite{BI&osp2018,Vinet2019}, the Racah algebras \cite{R&BI2015,Huang:R<BI} and the Brauer algebra \cite{Vinet2019}. The realizations of the Bannai--Ito algebras via  Dunkl harmonic analysis on the two-sphere were displayed in \cite{BI2014-2,BI2015,BI2016}. For generalizations of the Bannai--Ito algebras please refer to \cite{BI2018,HBI2016,qBI2019,qBI2018}.

Assume that $\F$ is algebraically closed with ${\rm char\,}\F=0$. It was falsely claimed in \cite[Lemma 5.9]{BImodule2016} that $X,Y,Z$ are diagonalizable on each finite-dimensional irreducible $\BI$-module and the mistake was used to classify even-dimensional irreducible $\BI$-modules \cite[Theorem 6.15]{BImodule2016} and odd-dimensional irreducible $\BI$-modules \cite[Theorem 7.5]{BImodule2016}. We display the following two examples to pinpoint the failure of \cite[Theorem 6.15]{BImodule2016} and \cite[Theorem 7.5]{BImodule2016}, respectively:

\allowdisplaybreaks
\begin{exam}\label{exam:E}
It is routine to verify that there exists a four-dimensional $\BI$-module $E$ that has an $\F$-basis $\{u_i\}_{i=0}^3$ with respect to which the matrices representing $X,Y,Z$ are  
\begin{align*}
\begin{pmatrix}
-\frac{1}{2} &0 &0 &0
\\
1 &-\frac{1}{2} &0 &0
\\
0 &1 &\frac{3}{2} &0
\\
0 &0 &1 &-\frac{5}{2}
\end{pmatrix},
\quad 
\begin{pmatrix}
-\frac{3}{2} &1 &0 &0
\\
0 &\frac{1}{2} &4 &0
\\
0 &0 &\frac{1}{2} &-3
\\
0 &0 &0 &-\frac{3}{2}
\end{pmatrix},
\quad 
\begin{pmatrix}
-\frac{3}{2} &-1 &0 &0
\\
-1 &\frac{1}{2} &4 &0
\\
0 &1 &-\frac{3}{2} &3
\\
0 &0 &-1 &\frac{1}{2}
\end{pmatrix},
\end{align*}
respectively.  More precisely 
\begin{gather*}
\{X,Y\}=Z+4,
\qquad
\{Y,Z\}=X+4,
\qquad 
\{Z,X\}=Y+2
\end{gather*}
on the $\BI$-module $E$. 
It is straightforward to check that the minimal polynomials of $X,Y,Z$ on $E$ are 
\begin{align*}
&\left(
x-\frac{3}{2}
\right)
\left(x+\frac{1}{2}
\right)^2
\left(
x+\frac{5}{2}
\right),
\\
&
\left(
x-\frac{1}{2}
\right)^2
\left(x+\frac{3}{2}
\right)^2,
\\
&\left(
x-\frac{3}{2}
\right)
\left(x+\frac{1}{2}
\right)^2
\left(
x+\frac{5}{2}
\right),
\end{align*}
respectively. Therefore none of $X,Y,Z$ is diagonalizable on $E$.
To see the irreducibility of $E$,  we suppose that $W$ is any nonzero $\BI$-submodule of $E$ and show that $W=E$.  
The element $Y$ has exactly two eigenvalues $-\frac{3}{2}$ and $\frac{1}{2}$ in $E$ and the $-\frac{3}{2}$- and $\frac{1}{2}$-eigenspaces of $Y$ in $E$ are of dimension $1$ spanned by 
$$
u_0,
\qquad 
u_0+2u_1
$$ 
respectively. 
Since $W$ is nonzero $-\frac{3}{2}$ or $\frac{1}{2}$ is an eigenvalue of $Y$ in $W$. Therefore $W$ contains $u_0$ or $u_0+2u_1$. If $u_0+2u_1\in W$ then 
$$
u_0=\frac{1}{3}(X-Z)(u_0+2u_1)\in W.
$$
By these comments $u_0\in W$. The $\BI$-module $E$ is generated by $u_0$, so $W=E$, a counterexample to \cite[Theorem 6.15]{BImodule2016}.
\end{exam}

\begin{exam}\label{exam:O}
It is routine to verify that there exists a five-dimensional $\BI$-module $O$ that has an $\F$-basis with respect to which the matrices representing $X,Y,Z$ are  
\begin{align*}
\begin{pmatrix}
-\frac{1}{2} &0 &0 &0 &0
\\
1 &-\frac{1}{2} &0 &0 &0 
\\
0 &1 &\frac{3}{2} &0 &0 
\\
0 &0 &1 &-\frac{5}{2} &0 
\\
0 &0 &0 &1 &\frac{7}{2} 
\end{pmatrix},
\quad 
\begin{pmatrix}
-\frac{3}{2} &4 &0 &0 &0
\\
0 &\frac{1}{2} &-2 &0 &0 
\\
0 &0 &\frac{1}{2} &6 &0 
\\
0 &0 &0 &-\frac{3}{2} &-12
\\
0 &0 &0 &0 &\frac{5}{2} 
\end{pmatrix},
\quad 
\begin{pmatrix}
\frac{3}{2} &-4 &0 &0 &0
\\
-1 &-\frac{5}{2} &-2 &0 &0 
\\
0 &1 &\frac{3}{2} &-6 &0 
\\
0 &0 &-1 &-\frac{5}{2} &-12
\\
0 &0 &0 &1 &\frac{3}{2} 
\end{pmatrix},
\end{align*}
respectively. More precisely
$$
\{X,Y\}=Z+4,
\qquad 
\{Y,Z\}=X-8,
\qquad 
\{Z,X\}=Y-4
$$
on the $\BI$-module $O$. It is routine to check that the minimal polynomials of $X,Y,Z$ on $O$ are 
\begin{align*}
&
\left(
x-\frac{7}{2}
\right)
\left(
x-\frac{3}{2}
\right)
\left(
x+\frac{1}{2}
\right)^2
\left(x+\frac{5}{2}
\right)
,
\\
&
\left(
x-\frac{5}{2}
\right)
\left(
x-\frac{1}{2}
\right)^2
\left(x+\frac{3}{2}
\right)^2,
\\
&\left(
x-\frac{3}{2}
\right)^2
\left(x+\frac{1}{2}
\right)^2
\left(
x+\frac{5}{2}
\right),
\end{align*}
respectively. Therefore none of $X,Y,Z$ is diagonalizable on $O$.
Similar to Example \ref{exam:E}, to see the irreducibility of $O$ we suppose that $W$ is any nonzero $\BI$-submodule of $O$ and show that $W=O$.  
The element $Y$ has exactly three eigenvalues $-\frac{3}{2},\frac{1}{2},\frac{5}{2}$ in $O$ and the $-\frac{3}{2}$-, $\frac{1}{2}$-, $\frac{5}{2}$-eigenspaces of $Y$ in $O$ are of dimension $1$ spanned by 
$$
u_0,
\qquad 
u_0+\frac{1}{2}u_1,
\qquad 
u_0+u_1-u_2-\frac{1}{3} u_3+\frac{1}{9} u_4
$$ 
respectively. 
Since $W$ is nonzero $-\frac{3}{2},\frac{1}{2}$ or $\frac{5}{2}$ is an eigenvalue of $Y$ in $W$. Therefore $W$ contains $u_0,u_0+\frac{1}{2}u_1$ or $u_0+u_1-u_2-\frac{1}{3} u_3+\frac{1}{9} u_4$. If $u_0+\frac{1}{2}u_1\in W$ then 
$$
u_0=\frac{1}{6}(Z-X+6)(u_0+\frac{1}{2}u_1)\in W.
$$
If $u_0+u_1-u_2-\frac{1}{3} u_3+\frac{1}{9} u_4\in W$ then 
$$
u_0=-\frac{1}{4}(3X+Z)(u_0+u_1-u_2-\frac{1}{3} u_3+\frac{1}{9} u_4)\in W.
$$
By these comments $u_0\in W$. The $\BI$-module $O$ is generated by $u_0$, so $W=O$, a counterexample to \cite[Theorem 7.5]{BImodule2016}.
\end{exam}

\noindent The main result of this paper is to give a complete classification of finite-dimensional irreducible $\BI$-modules, which answers the first open question listed in \cite[\S 11]{BI2015-2}. The proof idea originates from \cite{Huang:2015}. 
We mentioned earlier that \cite{BImodule2016} contains a mistake. Additionally, the same mistake was made in the case of the Racah algebras \cite{Rmodule2019}. S. Bockting--Conrad and the present author write a paper \cite{SH:2019-1} to correct it in greater detail. 
The results of \cite{R&BI2015,Huang:R<BI} reveal that the universal Racah algebra $\Re$ is isomorphic to an $\F$-subalgebra of $\BI$.
As an application of \cite{SH:2019-1} and our result, the lattices of $\Re$-submodules of finite-dimensional irreducible $\BI$-modules are classified in \cite{Huang:R<BImodules}.

The paper is organized as follows: In \S\ref{s:result} we state the classification of irreducible $\BI$-modules that have even and odd dimensions in Theorem \ref{thm:even} and Theorem \ref{thm:odd}, respectively. In \S\ref{s:Verma} we introduce our main tool, an infinite-dimensional $\BI$-module. In \S\ref{s:irr} we establish the necessary and sufficient conditions for the irreducibility of even-dimensional $\BI$-modules. In \S\ref{s:iso_class} we study the isomorphism classes of even-dimensional $\BI$-modules. In \S\ref{s:classification_even} we give a proof for Theorem \ref{thm:even}. Theorem \ref{thm:odd} follows by a similar argument.

\section{Statement of results}\label{s:result}

\begin{defn}\label{defn:BI}
The {\it universal Bannai--Ito algebra} $\BI$ is a unital associative $\F$-algebra generated by $X,Y,Z$ and the relations assert that each of the following elements commutes with $X,Y,Z$:
\begin{align}
&\{X,Y\}-Z, 
\label{kappa}
\\
&\{Y,Z\}-X,
\label{lambda}
\\ 
&\{Z,X\}-Y.
\label{mu}
\end{align}
\end{defn}

For notational convenience, we define $\kappa,\lambda,\mu$ to be the central elements (\ref{kappa})--(\ref{mu}) of $\BI$, respectively.

\begin{lem}\label{lem:XYkappa}
The algebra $\BI$ is generated by $X,Y,\kappa$.
\end{lem}
\begin{proof}
By (\ref{kappa}) the element $Z$ can be expressed in term of $X,Y,\kappa$. Combined with Definition \ref{defn:BI} the lemma follows.
\end{proof}

\begin{lem}\label{lem:BIpresentation}
The algebra $\BI$ has a presentation given by generators $X,Y,\kappa,\lambda,\mu$ and relations
\allowdisplaybreaks
\begin{gather}
Y^2 X+2YXY+X Y^2-X=2\kappa Y+\lambda,
\label{YYX}\\
X^2 Y+2 XYX+Y X^2-Y=2\kappa X+\mu,
\label{XXY}\\
\lambda X=X\lambda,
\quad 
\mu X=X\mu,
\quad
\kappa X=X\kappa,
\label{r:BI1}
\\
\lambda Y=Y\lambda,
\quad 
\mu Y=Y\mu,
\quad
\kappa Y=Y\kappa,
\label{r:BI2}
\\
\lambda \kappa=\kappa\lambda,
\qquad 
\mu \kappa=\kappa\mu.
\label{r:BI3}
\end{gather}
\end{lem}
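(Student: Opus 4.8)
The plan is to show $\BI\cong\mathcal{A}$, where $\mathcal{A}$ denotes the unital associative $\F$-algebra presented by the generators $X,Y,\kappa,\lambda,\mu$ and the relations (\ref{YYX})--(\ref{r:BI3}), by constructing mutually inverse $\F$-algebra homomorphisms between them.

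Going from $\mathcal{A}$ to $\BI$ is the easy direction: in $\BI$ the elements $\kappa,\lambda,\mu$ defined by (\ref{kappa})--(\ref{mu}) are central, since they commute with the generators $X,Y,Z$; hence (\ref{r:BI1})--(\ref{r:BI3}) hold, and substituting $Z=\{X,Y\}-\kappa=XY+YX-\kappa$ into $\lambda=\{Y,Z\}-X$ and $\mu=\{Z,X\}-Y$ and using that $\kappa$ is central yields
\begin{align*}
\lambda &= Y^2X+2YXY+XY^2-X-2\kappa Y,\\
\mu &= X^2Y+2XYX+YX^2-Y-2\kappa X,
\end{align*}
which are exactly (\ref{YYX}) and (\ref{XXY}). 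This gives a homomorphism $\psi\colon\mathcal{A}\to\BI$ fixing each of $X,Y,\kappa,\lambda,\mu$.

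For the reverse direction I would first check that $\kappa,\lambda,\mu$ are central in $\mathcal{A}$. This is the only point requiring care, because $\mathcal{A}$ has five generators: the relations (\ref{r:BI1})--(\ref{r:BI2}) say $\kappa,\lambda,\mu$ each commute with $X$ and $Y$, but one still has to see they commute with one another. The key observation is that (\ref{YYX}) and (\ref{XXY}) exhibit $\lambda$ and $\mu$ as polynomials in $X,Y,\kappa$; since $\kappa$ commutes with $X$ and $Y$, it therefore commutes with $\lambda$ and $\mu$, so $\kappa$ is central, and then $\lambda$ (resp. $\mu$), commuting with $X$, $Y$ and $\kappa$, commutes with the polynomial expression for $\mu$ (resp. $\lambda$), so $\lambda$ and $\mu$ are central as well. (In particular (\ref{r:BI3}) is redundant.) Now set $Z:=XY+YX-\kappa$ in $\mathcal{A}$. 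Then $\{X,Y\}-Z=\kappa$, and expanding $\{Y,Z\}-X$ and $\{Z,X\}-Y$ using that $\kappa$ is central and then applying (\ref{YYX}) and (\ref{XXY}) shows these equal $\lambda$ and $\mu$; all three are central, so the defining relations of $\BI$ are satisfied, giving a homomorphism $\varphi\colon\BI\to\mathcal{A}$ with $X\mapsto X$, $Y\mapsto Y$, $Z\mapsto XY+YX-\kappa$ (and consequently $\kappa\mapsto\kappa$, $\lambda\mapsto\lambda$, $\mu\mapsto\mu$).

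Finally I would compare the two composites on generators: $\psi\circ\varphi$ fixes $X,Y$ and sends $Z\mapsto XY+YX-\kappa=Z$ in $\BI$, so it is the identity of $\BI$; and $\varphi\circ\psi$ fixes $X,Y,\kappa,\lambda,\mu$, so it is the identity of $\mathcal{A}$. Hence $\varphi$ and $\psi$ are mutually inverse isomorphisms, and the presentation is as claimed. The main obstacle is the bookkeeping establishing centrality of $\kappa,\lambda,\mu$ in $\mathcal{A}$; everything else reduces to a short anticommutator computation.
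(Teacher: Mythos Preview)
Your argument is correct and follows essentially the same idea as the paper's proof: both use the substitution $Z=\{X,Y\}-\kappa$ to translate between the two sets of relations. The paper's proof is a terse two-line sketch (substitute $Z=\{X,Y\}-\kappa$ into (\ref{lambda}) and (\ref{mu}) to obtain (\ref{YYX}) and (\ref{XXY}); note that (\ref{r:BI1})--(\ref{r:BI3}) reformulate the commutation of $\kappa,\lambda,\mu$ with $X,Y,Z$), whereas you carry out the standard rigorous verification by explicitly constructing mutually inverse algebra homomorphisms, and along the way observe that (\ref{r:BI3}) is in fact redundant---a point the paper does not mention.
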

\begin{proof}
Rewrite (\ref{kappa}) as 
\begin{gather}\label{e:Z}
Z=\{X,Y\}-\kappa.
\end{gather}
Relations (\ref{YYX}) and (\ref{XXY}) follow by substituting (\ref{e:Z}) into (\ref{lambda}) and (\ref{mu}), respectively. Relations (\ref{r:BI1})--(\ref{r:BI3}) reformulate the commutation of $\kappa,\lambda,\mu$ with $X,Y,Z$. 
\end{proof}

\begin{prop}\label{prop:Ed} 
For any scalars $a,b,c\in \F$ and any odd integer $d\geq 1$, there exists a $(d+1)$-dimensional $\BI$-module $E_d(a,b,c)$ satisfying the following conditions: 
\begin{enumerate}
\item There exists an $\mathbb F$-basis $\{v_i\}_{i=0}^d$ for $E_d(a,b,c)$ with respect to which the matrices representing $X$ and $Y$ are 
$$
\begin{pmatrix}
\theta_0 & & &  &{\bf 0}
\\ 
1 &\theta_1 
\\
&1 &\theta_2
 \\
& &\ddots &\ddots
 \\
{\bf 0} & & &1 &\theta_d
\end{pmatrix},
\qquad 
\begin{pmatrix}
\theta_0^* &\varphi_1 &  & &{\bf 0}
\\ 
 &\theta_1^* &\varphi_2
\\
 &  &\theta_2^* &\ddots
 \\
 & & &\ddots &\varphi_d
 \\
{\bf 0}  & & & &\theta_d^*
\end{pmatrix}
$$
respectively, where 
\allowdisplaybreaks
\begin{align*}
\theta_i &=\frac{(-1)^i(2a-d+2i)}{2}  \qquad (0\leq i\leq d),
\\
\theta^*_i &=\frac{(-1)^i(2b-d+2i)}{2} \qquad (0\leq i\leq d),
\\
\varphi_i &=
\left\{
\begin{array}{ll}
\displaystyle{i (d-i+1)}
\qquad &\hbox{if $i$ is even},
\\
\displaystyle{c^2-\frac{(2a+2b-d+2i-1)^2}{4}}
\qquad &\hbox{if $i$ is odd}
\end{array}
\right.
\qquad (1\leq i\leq d).
\end{align*}

\item The elements $\kappa,\lambda,\mu$ act on $E_d(a,b,c)$ as scalar multiplication by
\begin{align*}
c^2-a^2-b^2+\frac{(d+1)^2}{4},
\\
a^2-b^2-c^2+\frac{(d+1)^2}{4},
\\
b^2-c^2-a^2+\frac{(d+1)^2}{4},
\end{align*}
respectively.
\end{enumerate}
\end{prop}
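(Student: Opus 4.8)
The plan is to verify directly that the prescribed matrices for $X$ and $Y$ satisfy the defining relations of $\BI$ in the presentation of Lemma~\ref{lem:BIpresentation}, namely (\ref{YYX}) and (\ref{XXY}), together with the scalar values of $\kappa,\lambda,\mu$ claimed in part (ii). Since $X$ is lower bidiagonal with subdiagonal entries all equal to $1$ and diagonal entries $\theta_i$, and $Y$ is upper bidiagonal with superdiagonal entries $\varphi_i$ and diagonal entries $\theta_i^*$, all the required products $Y^2X$, $YXY$, $XY^2$, $X^2Y$, $XYX$, $YX^2$ are banded matrices, so the relations reduce to a finite list of scalar identities indexed by the matrix entries $(i,j)$ with $|i-j|\le 2$. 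First I would fix the basis $\{v_i\}_{i=0}^d$ and record the action $Xv_i=\theta_i v_i+v_{i+1}$ (with $v_{d+1}=0$) and $Yv_i=\theta_i^* v_i+\varphi_i v_{i-1}$ (with $v_{-1}=0$), which makes the bookkeeping mechanical.

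The core computation is to expand $(Y^2X+2YXY+XY^2-X)v_i$ and show it equals $(2\kappa Y+\lambda)v_i = 2\kappa(\theta_i^* v_i+\varphi_i v_{i-1})+\lambda v_i$, where $\kappa,\lambda$ are the scalars in (ii); and symmetrically for (\ref{XXY}). Applying the left-hand operator to $v_i$ produces a combination of $v_{i-2}, v_{i-1}, v_i, v_{i+1}$, so one gets four scalar equations per relation. The $v_{i+1}$-coefficient and the $v_{i-2}$-coefficient should vanish identically (these encode that $Z=\{X,Y\}-\kappa$ is again bidiagonal of the right shape), the $v_i$-coefficient should reproduce $2\kappa\theta_i^*+\lambda$, and the $v_{i-1}$-coefficient should reproduce $2\kappa\varphi_i$. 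The key algebraic input is that $\theta_i+\theta_{i+1}=(-1)^i\cdot(-1)$ has a simple alternating form, and likewise $\theta_i^*+\theta_{i+1}^*$, so that sums like $\theta_{i-1}+\theta_i$, $\theta_i^*+\theta_{i+1}^*$ telescope nicely; combined with the explicit quadratic/linear formulas for $\varphi_i$ (split by parity of $i$), each of the four scalar identities becomes a polynomial identity in $a,b,c,d,i$ that one checks by direct expansion, treating the even-$i$ and odd-$i$ cases separately because $\varphi_i$ is defined piecewise.

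Once the relations (\ref{YYX})--(\ref{XXY}) hold, Lemma~\ref{lem:BIpresentation} guarantees that the assignment $X\mapsto$ (first matrix), $Y\mapsto$ (second matrix), $\kappa\mapsto c^2-a^2-b^2+\tfrac{(d+1)^2}{4}$, $\lambda\mapsto a^2-b^2-c^2+\tfrac{(d+1)^2}{4}$, $\mu\mapsto b^2-c^2-a^2+\tfrac{(d+1)^2}{4}$ extends to an algebra homomorphism $\BI\to \mathrm{End}(E_d(a,b,c))$, which is exactly the asserted module structure; that $\dim E_d(a,b,c)=d+1$ is immediate from the basis. It remains only to double-check consistency: the scalar for $\mu$ is forced once $X,Y,\kappa,\lambda$ are fixed (via $Y=\{Z,X\}-\mu$ with $Z=\{X,Y\}-\kappa$), so I would verify that the given $\mu$-value agrees with this forced value, i.e.\ that the third defining relation $\{Z,X\}-Y$ is central and equals the claimed scalar — another banded-matrix entry comparison of the same flavor.

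The main obstacle is purely the volume and parity-casework of the entrywise verification of (\ref{YYX}) and (\ref{XXY}): because $\varphi_i$ has different formulas for even and odd $i$, the $v_{i-1}$- and $v_{i-2}$-coefficient equations must each be checked in both parities, and one must be careful that the alternating signs $(-1)^i$ in $\theta_i,\theta_i^*$ are tracked correctly when shifting indices $i\mapsto i\pm 1$. There is no conceptual difficulty — the existence of such a module is a finite check — but it is the kind of computation where a sign error is easy to make, so I would organize it by first proving two lemmas recording $\theta_i+\theta_{i+1}$, $\theta_i^*+\theta_{i+1}^*$, $\theta_i\theta_{i+1}$, and the relevant combinations of $\varphi_i$ with these, and then substituting.
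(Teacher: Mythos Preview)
Your proposal is correct and is exactly the approach the paper takes: the paper's proof is the single sentence ``It is straightforward to verify the proposition using Lemma~\ref{lem:BIpresentation},'' and your write-up is a detailed (and accurate) expansion of what that straightforward verification entails.
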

\begin{proof}
It is straightforward to verify the proposition using Lemma \ref{lem:BIpresentation}.
\end{proof}

Recall that $\{\pm 1\}$ is a group under multiplication and the group $\{\pm 1\}^2$ is isomorphic to the Klein $4$-group. 
Observe that there exists a unique $\{\pm 1\}^2$-action on $\BI$ such that each $(\e,\e')\in \{\pm 1\}^2$ acts on $\BI$ as an $\F$-algebra automorphism in the following way:

\begin{table}[H]
\centering
\extrarowheight=3pt
\begin{tabular}{c|rrr}
$u$  &$X$ &$Y$ &$Z$ 
\\

\midrule[1pt]

$u^{(1,1)}$ &$X$ &$Y$ &$Z$ 
\\
$u^{(1,-1)}$ &$X$  &$-Y$ &$-Z$
\\
$u^{(-1,1)}$ &$-X$  &$Y$ &$-Z$
\\
$u^{(-1,-1)}$ &$-X$  &$-Y$ &$Z$
\end{tabular}
\caption{The $\{\pm 1\}^2$-action on $\BI$}\label{pm1-action}
\end{table}
\noindent Let $V$ denote a $\BI$-module. For any $(\e,\e')\in\{\pm 1\}^2$, we define 
$$
V^{(\e,\e')}
$$ 
to be the $\BI$-module obtained by twisting the $\BI$-module $V$ via $(\e,\e')$. 
The classification of even-dimensional irreducible $\BI$-modules is as follows:

\begin{thm}\label{thm:even}
Assume that $\F$ is algebraically closed with ${\rm char\,}\F=0$. 
Let $d\geq 1$ denote an odd integer. 
Let $\mathbf{EM}_d$ denote the set of all isomorphism classes of irreducible $\BI$-modules that have dimension $d+1$. Let $\mathbf{EP}_d$ denote the set of all $(a,b,c)\in \F^3$ that satisfy 
$$
a+b+c, -a+b+c, a-b+c, a+b-c\not\in \left\{
\displaystyle{\frac{d-1}{2}-i}\,\bigg|\,i=0,2,\ldots,d-1
\right\}.
$$
Define an action of the abelian group $\{\pm 1\}^3$ on $\mathbf{EP}_d$ by 
\begin{align*}
(a,b,c)^{(-1,1,1)} &= (-a,b,c),
\\
(a,b,c)^{(1,-1,1)} &= (a,-b,c),
\\
(a,b,c)^{(1,1,-1)} &= (a,b,-c)
\end{align*}
for all $(a,b,c)\in \mathbf{EP}_d$. Let $\mathbf{EP}_d/\{\pm 1\}^3$ denote the set of the $\{\pm 1\}^3$-orbits of $\mathbf{EP}_d$. For $(a,b,c)\in \mathbf{EP}_d$, let $[a,b,c]$ denote the $\{\pm 1\}^3$-orbit of $\mathbf{EP}_d$ that contains $(a,b,c)$. Then there exists a bijection $\mathcal E:\{\pm 1\}^2\times \mathbf{EP}_d/\{\pm 1\}^3 \to \mathbf{EM}_d$ given by
\begin{eqnarray*}
((\e,\e'),[a,b,c])
&\mapsto &
\hbox{the isomorphism class of $E_d(a,b,c)^{(\e,\e')}$}
\end{eqnarray*}
for all $(\e,\e')\in \{\pm 1\}^2$ and all $[a,b,c]\in \mathbf{EP}_d/\{\pm 1\}^3$.
\end{thm}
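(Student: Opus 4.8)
\textbf{Proof plan for Theorem \ref{thm:even}.}

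The plan is to establish that $\mathcal E$ is well-defined, surjective, and injective. For well-definedness I would first verify that for $(a,b,c)\in\mathbf{EP}_d$ the module $E_d(a,b,c)$ is irreducible; this should be the content of \S\ref{s:irr}, where the displayed condition on $a+b+c$, $-a+b+c$, $a-b+c$, $a+b-c$ is shown to be exactly the condition that no $\varphi_i$ with $i$ odd vanishes while the relevant structure forces irreducibility (an argument of the type carried out by hand in Examples \ref{exam:E} and \ref{exam:O}: one exhibits an eigenvector of $Y$ spanning a one-dimensional eigenspace, shows a submodule must contain the cyclic generator $v_0$, and uses that $v_0$ generates $E_d(a,b,c)$ because of the subdiagonal $1$'s in the matrix of $X$). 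Twisting by $(\e,\e')\in\{\pm1\}^2$ preserves irreducibility and dimension, so each $E_d(a,b,c)^{(\e,\e')}$ does lie in $\mathbf{EM}_d$. I would then check that the $\{\pm1\}^3$-action on $\mathbf{EP}_d$ preserves $\mathbf{EP}_d$ (immediate, since the defining set of excluded values is symmetric under the relevant sign changes) and that $E_d(-a,b,c)\cong E_d(a,b,c)$, and likewise for the other two sign changes — this is the key isomorphism computation, presumably the content of \S\ref{s:iso_class}. Concretely, one produces an explicit change of basis (reversing the order of the basis $\{v_i\}$ and rescaling) intertwining the two module structures; the scalars $\theta_i,\theta_i^*,\varphi_i$ are visibly compatible with such a reversal once $a\mapsto -a$ because $\theta_i$ for index $i$ becomes $\theta_{d-i}$ up to sign, and $\varphi_i$ is symmetric enough under $i\mapsto d-i+1$. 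Granting these, $\mathcal E$ descends to a well-defined map on $\{\pm1\}^2\times\mathbf{EP}_d/\{\pm1\}^3$.

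For surjectivity I would argue that every $(d+1)$-dimensional irreducible $\BI$-module $V$ is isomorphic to some $E_d(a,b,c)^{(\e,\e')}$. The natural route, matching the ``proof idea originates from \cite{Huang:2015}'' remark, is to use the infinite-dimensional $\BI$-module from \S\ref{s:Verma}: on $V$ the central elements $\kappa,\lambda,\mu$ act as scalars, which by Proposition \ref{prop:Ed}(ii) determine candidate values of $a^2,b^2,c^2$ (hence $a,b,c$ up to the sign group $\{\pm1\}^3$); one then shows $V$, being irreducible and finite-dimensional, is a quotient of the relevant Verma-type module and, after possibly twisting by an element of $\{\pm1\}^2$ to normalize, must be the unique such quotient of dimension $d+1$, namely $E_d(a,b,c)$. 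Here the parity of $d$ (odd) enters: the even-dimensional case forces a particular truncation pattern of the infinite-dimensional module. The role of the hypothesis ${\rm char\,}\F=0$ and algebraic closedness is to guarantee eigenvalues exist and the scalars $\theta_i$ are distinct in the needed places.

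For injectivity I would suppose $E_d(a,b,c)^{(\e,\e')}\cong E_d(a',b',c')^{(\e'',\e''')}$ and deduce $(\e,\e')=(\e'',\e''')$ and $[a,b,c]=[a',b',c']$. Comparing the action of the central elements $\kappa,\lambda,\mu$ (which is twist-invariant) forces $\{a^2,b^2,c^2\}$ to match with the correct pairing of $\pm$ signs in Proposition \ref{prop:Ed}(ii), i.e.\ $(a',b',c')$ lies in the $\{\pm1\}^3$-orbit of $(a,b,c)$; then one pins down $(\e,\e')$ by comparing, say, the set of eigenvalues of $X$ (or the sign of the leading eigenvalue $\theta_0$, which the $(\e,\e')$-twist flips in a controlled way that the $\{\pm1\}^3$-orbit representative cannot). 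I expect the \textbf{main obstacle} to be the isomorphism $E_d(a,b,c)\cong E_d(a,b,c)^{(\e,\e')}$-versus-not bookkeeping in injectivity: one must check that the $\{\pm1\}^3$-action on parameters and the $\{\pm1\}^2$-twist on modules interact exactly so that the orbits do not overcount or undercount — equivalently, that the stabilizer computations line up so $\{\pm1\}^2\times\mathbf{EP}_d/\{\pm1\}^3$ has the right cardinality. The cleanest way to close this is to show the four twists $E_d(a,b,c)^{(\e,\e')}$ are pairwise non-isomorphic for fixed $[a,b,c]\in\mathbf{EP}_d$ (by an eigenvalue/sign invariant) and that no further identifications among the $E_d(a',b',c')$ with $(a',b',c')$ outside one $\{\pm1\}^3$-orbit occur, again via the central-character computation of Proposition \ref{prop:Ed}(ii). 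Theorem \ref{thm:odd} then follows by the same scheme with the odd-dimensional truncation of the \S\ref{s:Verma} module.
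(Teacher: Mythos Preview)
Your overall architecture---well-definedness via \S\ref{s:irr} and \S\ref{s:iso_class}, surjectivity via the Verma-type module of \S\ref{s:Verma}, injectivity via invariants---matches the paper's, and your sketches of the first two parts are essentially what Theorems \ref{thm:irrE}, \ref{thm:iso}, and \ref{thm:surjective} carry out. The gap is in your injectivity argument.

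The parenthetical ``which is twist-invariant'' is false. Under the automorphism $(\e,\e')$ one has $\kappa\mapsto \e\e'\kappa$, $\lambda\mapsto \e\lambda$, $\mu\mapsto \e'\mu$ (just compute from Table~\ref{pm1-action}), so on $E_d(a,b,c)^{(\e,\e')}$ the element $\kappa$ acts by $\e\e'\omega$, not $\omega$. Consequently you cannot read off $a^2,b^2,c^2$ from the central character before you know $(\e,\e')$, and your proposed order---central character first, then an eigenvalue invariant---does not work as stated. The paper reverses the order: Lemma~\ref{lem:trace} computes that the trace of $X$ on $E_d(a,b,c)^{(\e,\e')}$ is $-\e\,(d+1)/2$ and the trace of $Y$ is $-\e'\,(d+1)/2$, both \emph{independent of} $a,b,c$. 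Since ${\rm char\,}\F=0$ these traces are nonzero and determine $(\e,\e')$ outright. Only then, with the twist stripped off, does Lemma~\ref{lem:central} recover $a^2,b^2,c^2$ from the scalars of $\kappa+\mu$, $\lambda+\kappa$, $\mu+\lambda$, giving $[a,b,c]$. Your instinct to use an eigenvalue-based invariant for $(\e,\e')$ is right---the trace is exactly such an invariant---but it must come first, and ``the sign of the leading eigenvalue $\theta_0$'' is not a clean choice since $\theta_0=a-d/2$ moves under $a\mapsto -a$ as well.
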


We now turn our attention to the odd-dimensional irreducible $\BI$-modules.

\begin{prop}\label{prop:Od} 
For any scalars $a,b,c\in \F$ and any even integer $d\geq 0$, there exists a $(d+1)$-dimensional $\BI$-module $O_d(a,b,c)$ satisfying the following conditions:
\begin{enumerate}
\item There exists an $\mathbb F$-basis for $O_d(a,b,c)$ with respect to which the matrices representing $X$ and $Y$ are 
$$
\begin{pmatrix}
\theta_0 & & &  &{\bf 0}
\\ 
1 &\theta_1 
\\
&1 &\theta_2
 \\
& &\ddots &\ddots
 \\
{\bf 0} & & &1 &\theta_d
\end{pmatrix},
\qquad 
\begin{pmatrix}
\theta_0^* &\varphi_1 &  & &{\bf 0}
\\ 
 &\theta_1^* &\varphi_2
\\
 &  &\theta_2^* &\ddots
 \\
 & & &\ddots &\varphi_d
 \\
{\bf 0}  & & & &\theta_d^*
\end{pmatrix}
$$
respectively, where 
\begin{align*}
\theta_i &=\frac{(-1)^i(2a-d+2i)}{2}  \qquad (0\leq i\leq d),
\\
\theta^*_i &=\frac{(-1)^i(2b-d+2i)}{2} \qquad (0\leq i\leq d),
\\
\varphi_i &=
\left\{
\begin{array}{ll}
\displaystyle{\frac{i(d+1-2i-2a-2b-2c)}{2}}
\qquad &\hbox{if $i$ is even},
\\
\displaystyle{\frac{(i-d-1)(d+1-2i-2a-2b+2c)}{2}}
\qquad &\hbox{if $i$ is odd}
\end{array}
\right.
\qquad (1\leq i\leq d).
\end{align*}

\item The elements $\kappa,\lambda,\mu$ act on $O_d(a,b,c)$ as scalar multiplication by
\allowdisplaybreaks
\begin{align*}
2ab-c(d+1),
\\
2bc-a(d+1),
\\
2ca-b(d+1),
\end{align*}
respectively.
\end{enumerate}
\end{prop}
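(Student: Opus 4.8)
The plan is to mimic the proof strategy already used for Proposition \ref{prop:Ed}, namely to verify directly that the displayed matrices satisfy the defining relations of $\BI$ in the form given by Lemma \ref{lem:BIpresentation}. First I would fix the $(d+1)$-dimensional $\F$-vector space with basis $\{v_i\}_{i=0}^{d}$, declare that $X$ acts as the lower-bidiagonal matrix with $Xv_i = \theta_i v_i + v_{i+1}$ (with $v_{d+1}=0$) and that $Y$ acts as the upper-bidiagonal matrix with $Yv_i = \theta_i^* v_i + \varphi_i v_{i-1}$ (with $v_{-1}=0$), and then \emph{define} $\kappa,\lambda,\mu$ to act as the scalars listed in part (ii). The content of the proposition is then exactly that relations (\ref{YYX}) and (\ref{XXY}) hold with these operators, since (\ref{r:BI1})--(\ref{r:BI3}) are automatic once $\kappa,\lambda,\mu$ are central scalars.

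The key computation is to expand $Y^2X + 2YXY + XY^2 - X$ on each basis vector $v_i$ and check it equals $2\kappa Y v_i + \lambda v_i$, and symmetrically for (\ref{XXY}). Since $X$ shifts the index down by one (plus a diagonal term) and $Y$ shifts it up by one (plus a diagonal term), the composite $Y^2X+2YXY+XY^2$ applied to $v_i$ lands in $\mathrm{span}\{v_{i-1},v_i,v_{i+1}\}$, so the verification reduces to three scalar identities per basis vector: the $v_{i+1}$-coefficient, the $v_i$-coefficient, and the $v_{i-1}$-coefficient. The $v_{i+1}$-coefficient identity will be a polynomial identity in $\theta_i,\theta_{i+1},\theta_i^*,\theta_{i+1}^*$ together with $\kappa$; the $v_{i-1}$-coefficient identity additionally involves $\varphi_i$; and the $v_i$-coefficient identity will involve $\varphi_i$ and $\varphi_{i+1}$ and should reproduce $\lambda$ (resp.\ $\mu$). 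One substitutes the explicit formulas $\theta_i=\tfrac{(-1)^i(2a-d+2i)}{2}$, $\theta_i^*=\tfrac{(-1)^i(2b-d+2i)}{2}$, and the parity-dependent $\varphi_i$, and checks each identity; the alternating signs $(-1)^i$ mean one should track the parity of $i$ carefully, but each case collapses to an elementary polynomial identity in $a,b,c,d$.

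The main obstacle is purely bookkeeping: the $v_i$-coefficient identity is the delicate one because it mixes $\varphi_i$ and $\varphi_{i+1}$, which have different closed forms according to the parity of $i$, so one effectively has two sub-cases ($i$ even, $i$ odd) in which $\varphi_i$ and $\varphi_{i+1}$ swap roles between the ``even'' and ``odd'' formulas; one must confirm that in both sub-cases the resulting expression simplifies to the same constant $2ab-c(d+1)$ appearing in part (ii) as the eigenvalue of $\lambda$, and likewise $2ca-b(d+1)$ for $\mu$. Once these parity cases are dispatched, the diagonal and off-diagonal coefficient identities follow by the same direct substitution, completing the verification; hence as in Proposition \ref{prop:Ed} the proof is a (somewhat lengthy but routine) direct check against Lemma \ref{lem:BIpresentation}.
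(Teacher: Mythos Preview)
Your plan is correct and is exactly the approach the paper takes: its entire proof reads ``It is straightforward to verify the proposition using Lemma~\ref{lem:BIpresentation},'' and your proposal is simply an expanded blueprint of that routine check. One small refinement: a priori $Y^2X+2YXY+XY^2$ acting on $v_i$ could produce a $v_{i-2}$ component, so there is a fourth coefficient identity to verify, but it reduces to $\theta_i+2\theta_{i-1}+\theta_{i-2}=0$, which follows immediately from the explicit $\theta$'s (cf.\ Lemma~\ref{lem:theta}(i)); with that noted, your three remaining coefficient checks proceed as you describe.
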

\begin{proof}
It is straightforward to verify the proposition using Lemma \ref{lem:BIpresentation}.
\end{proof}

The following result is concerning the $\BI$-modules $O_d(a,b,c)^{(\e,\e')}$ for all $(\e,\e')\in \{\pm 1\}^2$. To prove this, one may follow the proof of Theorem \ref{thm:iso}.

\begin{thm}
Suppose that the $\BI$-module $O_d(a,b,c)$ is irreducible. Then the following hold:
\begin{enumerate}
\item The $\BI$-module $O_d(a,b,c)^{(1,-1)}$ is isomorphic to $O_d(a,-b,-c)$. 

\item The $\BI$-module $O_d(a,b,c)^{(-1,1)}$ is isomorphic to $O_d(-a,b,-c)$. 

\item The $\BI$-module $O_d(a,b,c)^{(-1,-1)}$ is isomorphic to $O_d(-a,-b,c)$. 
\end{enumerate}
\end{thm}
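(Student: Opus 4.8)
The plan is to construct, for each $(\varepsilon,\varepsilon')\in\{\pm 1\}^2$, an explicit $\F$-linear isomorphism between $O_d(a,b,c)^{(\varepsilon,\varepsilon')}$ and the asserted module $O_d(a',b',c')$, where $(a',b',c')$ is obtained from $(a,b,c)$ by the indicated sign changes. Fix the basis $\{v_i\}_{i=0}^d$ for $O_d(a,b,c)$ from Proposition \ref{prop:Od} and the basis $\{v_i'\}_{i=0}^d$ for $O_d(a',b',c')$. In the twisted module $O_d(a,b,c)^{(\varepsilon,\varepsilon')}$, the generators $X,Y$ act by the matrices $\varepsilon\cdot(\text{matrix of }X)$ and $\varepsilon'\cdot(\text{matrix of }Y)$ relative to $\{v_i\}$; I want to exhibit a diagonal-times-permutation change of basis conjugating these into the standard lower-triangular/upper-bidiagonal shape of $O_d(a',b',c')$. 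Since the irreducibility of $O_d(a,b,c)$ is assumed, by Schur's lemma it suffices to produce any nonzero $\BI$-module homomorphism, so once the matrix forms are matched there is nothing more to check; alternatively one simply verifies directly that the candidate linear map intertwines the actions of $X$ and $Y$, which by Lemma \ref{lem:XYkappa} generate $\BI$ together with $\kappa$ (and $\kappa$ acts as a scalar, so it is automatic once the scalars are checked against part (ii) of Proposition \ref{prop:Od}).

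Concretely, for part (iii) with $(\varepsilon,\varepsilon')=(-1,-1)$: negating both $X$ and $Y$ sends $\theta_i\mapsto-\theta_i$ and $\theta_i^*\mapsto-\theta_i^*$ while leaving the $\varphi_i$ unchanged (the subdiagonal $1$'s of $X$ become $-1$'s). Replacing $(a,b,c)$ by $(-a,-b,c)$ sends $\theta_i=\tfrac{(-1)^i(2a-d+2i)}{2}$ to $\tfrac{(-1)^i(-2a-d+2i)}{2}$; comparing with the reversed index $j=d-i$ and using that $d$ is even, one checks $-\theta_i(a) = \theta_{d-i}(-a)$ up to the overall sign bookkeeping, and similarly for $\theta_i^*$. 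Thus the correct change of basis should be (up to scalars) the order-reversing map $v_i\mapsto c_i\, v_{d-i}'$ for suitable nonzero constants $c_i$, chosen so that the subdiagonal entries of $X$ and the superdiagonal entries $\varphi_i$ of $Y$ are transported correctly; the constants $c_i$ are determined recursively by the two families of off-diagonal relations and exist precisely because the relevant $\varphi_i$ factors are nonzero under the irreducibility hypothesis. Parts (i) and (ii) are handled the same way with the appropriate sign pattern, and in those two cases the reversal may or may not be needed — one should determine from the $\theta_i,\theta_i^*$ formulas whether the matching is order-preserving or order-reversing, which is exactly the point where the parity of $d$ and the precise form of the sign changes $(a,-b,-c)$, $(-a,b,-c)$ enter.

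I expect the main obstacle to be purely bookkeeping: correctly pinning down the permutation (identity vs. index-reversal) and the diagonal conjugating matrix in each of the three cases, and then verifying that the transported $\varphi_i$'s genuinely match the target module's $\varphi_i$'s — this requires carefully using the even/odd split in the definition of $\varphi_i$ in Proposition \ref{prop:Od}, since the even-$i$ and odd-$i$ formulas behave differently under $i\mapsto d-i$ when $d$ is even. There is no conceptual difficulty, and as the statement itself notes, the argument is a direct parallel of the proof of Theorem \ref{thm:iso} for the even-dimensional modules; so in the write-up I would state the three isomorphisms explicitly by giving the conjugating matrices, remark that the verification that they intertwine $X$ and $Y$ is a routine computation with the formulas of Proposition \ref{prop:Od}, and invoke Proposition \ref{prop:Od}(ii) together with Lemma \ref{lem:XYkappa} to conclude that they are $\BI$-module isomorphisms.
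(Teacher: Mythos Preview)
Your monomial (diagonal-times-permutation) ansatz is too restrictive and will not produce these isomorphisms. Consider case (i): the $X$-matrix in $O_d(a,b,c)^{(1,-1)}$ is literally the same as in $O_d(a,-b,-c)$, so any $\BI$-intertwiner must commute with this common lower-bidiagonal matrix; when the $\theta_i$ are distinct the commutant consists of polynomials in $X$, hence of genuinely \emph{full} lower-triangular matrices, not monomial ones. In case (iii) your order-reversal fails for a different reason: conjugating the lower-bidiagonal matrix of $-X$ by the reversal permutation makes it \emph{upper}-bidiagonal, which can never match the lower-bidiagonal standard form of $X$ on $O_d(-a,-b,c)$; and without reversal the diagonals $-\theta_i(a)$ and $\theta_i(-a)$ agree only at the midpoint $i=d/2$. (Incidentally, your parenthetical in the case-(iii) discussion that negating $Y$ ``leaves the $\varphi_i$ unchanged'' is false: the entire superdiagonal gets negated as well.)

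The route the paper points to---``follow the proof of Theorem \ref{thm:iso}''---is the one you mention in your last paragraph but do not actually carry out. One sets up the odd-dimensional analogues of the Verma module $M_\delta$ and of Propositions \ref{prop:universal_prop} and \ref{prop:E}, and then, for each $(\varepsilon,\varepsilon')$, locates inside the twisted module $O_d(a,b,c)^{(\varepsilon,\varepsilon')}$ a single vector $v$ satisfying the three initial conditions $Yv=\theta_0^*v$, $(Y-\theta_1^*)(X-\theta_0)v=\varphi_1 v$, and the correct scalar actions of $\kappa,\lambda,\mu$, all computed with the \emph{target} parameters $(a',b',c')$. This yields a nonzero homomorphism $O_d(a',b',c')\to O_d(a,b,c)^{(\varepsilon,\varepsilon')}$; irreducibility of the domain (equivalent to that of $O_d(a,b,c)$ via the analogue of Theorem \ref{thm:irrE}) plus equality of dimensions finishes. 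The vector $v$ is, as in the proof of Proposition \ref{prop:iso3}, a linear combination of \emph{all} the standard basis vectors with coefficients built from products of the $\varphi_i$---which is exactly why the irreducibility hypothesis (guaranteeing those $\varphi_i\neq 0$) is needed, and why the resulting change-of-basis matrix is lower triangular rather than monomial.
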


The classification of odd-dimensional irreducible $\BI$-modules is as follows:

\begin{thm}\label{thm:odd}
Assume that $\F$ is algebraically closed with ${\rm char\,}\F=0$. 
Let $d\geq 0$ denote an even integer. 
Let $\mathbf{OM}_d$ denote the set of all isomorphism classes of irreducible $\BI$-modules that have dimension $d+1$. Let $\mathbf{OP}_d$ denote the set of all $(a,b,c)\in \F^3$ that satisfy 
$$
a+b+c, a-b-c, -a+b-c, -a-b+c\not\in 
\left\{\frac{d+1}{2}-i\,\bigg|\,i=2,4,\ldots,d\right\}.
$$
Then there exists a bijection $\mathcal O:\mathbf{OP}_d\to \mathbf{OM}_d$ given by
\begin{eqnarray*}
(a,b,c)
&\mapsto &
\hbox{the isomorphism class of $O_d(a,b,c)$}
\end{eqnarray*}
for all $(a,b,c)\in \mathbf{OP}_d$.
\end{thm}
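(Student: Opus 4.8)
The plan is to mirror the architecture that is laid out for the even-dimensional case (Theorems~\ref{thm:even} and \ref{thm:iso}), so I would develop the odd-dimensional theory in four stages: construct the module $O_d(a,b,c)$ (already done in Proposition~\ref{prop:Od}), determine precisely when it is irreducible, determine the isomorphisms among the modules $O_d(a,b,c)$, and finally count orbits to see that $\mathcal O$ is a well-defined bijection. The raw material for the irreducibility criterion is the explicit tridiagonal/bidiagonal action of $X$ and $Y$ from Proposition~\ref{prop:Od}: $X$ acts as a lower-triangular Jordan-type matrix with subdiagonal entries $1$ and diagonal $\{\theta_i\}$, and $Y$ as an upper-bidiagonal matrix with diagonal $\{\theta_i^*\}$ and superdiagonal $\{\varphi_i\}$. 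Because the subdiagonal of $X$ is all ones, the cyclic $\BI$-submodule generated by $v_0$ is all of $O_d(a,b,c)$; so irreducibility is equivalent to the statement that every nonzero submodule contains $v_0$. Here I would use the infinite-dimensional $\BI$-module of \S\ref{s:Verma} and its universal property: $O_d(a,b,c)$ is a quotient of that module, and a submodule fails to capture $v_0$ exactly when one of the $\varphi_i$ vanishes, producing an invariant subspace $\mathrm{span}\{v_i,v_{i+1},\dots,v_d\}$ (or a quotient structure). A short linear-algebra argument with the $Y$-eigenvectors, exactly as in Examples~\ref{exam:E} and~\ref{exam:O}, then upgrades ``some $\varphi_i=0$'' into a genuine proper submodule and conversely ``all $\varphi_i\neq 0$'' into irreducibility.

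The key computation is therefore to translate $\prod_{i=1}^{d}\varphi_i\neq 0$ into the stated condition on $(a,b,c)$. For even $i$ the factor $\varphi_i=\tfrac{i(d+1-2i-2a-2b-2c)}{2}$ vanishes iff $a+b+c=\tfrac{d+1-2i}{2}=\tfrac{d+1}{2}-i$ with $i\in\{2,4,\dots,d\}$ (the case $i=0$ is excluded from the range), which is one of the four forbidden families. For odd $i$ the factor $\varphi_i=\tfrac{(i-d-1)(d+1-2i-2a-2b+2c)}{2}$ vanishes iff $i=d+1$ (impossible, $i\le d$) or $a+b-c=\tfrac{d+1}{2}-i$ with $i$ odd; and since $d$ is even, $i$ odd means $i\in\{1,3,\dots,d-1\}$, so $\tfrac{d+1}{2}-i$ runs over the same set of half-integers as $\{\tfrac{d+1}{2}-i\mid i=2,4,\dots,d\}$ — here one must be slightly careful to check that the two index sets $\{1,3,\dots,d-1\}$ and $\{2,4,\dots,d\}$ give the same range of values $\tfrac{d+1}{2}-i$, which they do because both are $d/2$ consecutive integers (or half-integers depending on parity of $d/2$) offset appropriately. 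Running through all four automorphic twists — equivalently, using Theorem~\ref{thm:iso}'s analogue to see that the sign changes $(a,b,c)\mapsto(\pm a,\pm b,\pm c)$ permute the $\varphi_i$-conditions — produces exactly the four constraints $a+b+c,\ a-b-c,\ -a+b-c,\ -a-b+c\notin\{\tfrac{d+1}{2}-i\mid i=2,4,\dots,d\}$, i.e. membership in $\mathbf{OP}_d$.

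For surjectivity of $\mathcal O$ I would argue that any $(d+1)$-dimensional irreducible $\BI$-module $V$ has $\kappa,\lambda,\mu$ acting as scalars by Schur's lemma, and that the pair $(X,Y)$ can be brought into the normal form of Proposition~\ref{prop:Od} by choosing $v_0$ appropriately: pick an eigenvector of $X$ for a suitable eigenvalue (the ``bottom'' of the $X$-weight structure, which exists because $\F$ is algebraically closed), and use relations~\eqref{YYX}--\eqref{XXY} to force the tridiagonal shape and to solve for $a,b,c$ from the scalar values of $\kappa,\lambda,\mu$ together with the $X$-eigenvalue; irreducibility of $V$ forces all $\varphi_i\neq 0$, hence $(a,b,c)\in\mathbf{OP}_d$. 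For injectivity, I would show — following the proof of Theorem~\ref{thm:iso} — that within $\mathbf{OP}_d$ the module $O_d(a,b,c)$ determines $(a,b,c)$ on the nose: the set of $X$-eigenvalues is $\{\theta_i\}$, whose extremes recover $a$ up to the $d$-dependent shift and, crucially, pin down the sign because the off-diagonal $1$'s break the symmetry between $O_d(a,b,c)$ and $O_d(-a,b,c)$ (this is exactly why, in contrast to the even case where a $\{\pm1\}^3$-orbit is needed, no identification occurs here); similarly $Y$ recovers $b$ and then $\varphi_1$ (or any $\varphi_i$) recovers $c$. I expect the main obstacle to be precisely this last point — verifying that, unlike $E_d$, the modules $O_d(a,b,c)$ are pairwise non-isomorphic with \emph{no} residual sign or permutation symmetry, so that $\mathcal O$ is a bijection onto $\mathbf{OM}_d$ rather than onto a quotient — which amounts to checking that none of the twisted modules $O_d(a,b,c)^{(\e,\e')}$ lands back in the family $\{O_d(a',b',c')\}$ except via the sign changes already recorded in the preceding theorem, and that those sign changes move $(a,b,c)$ outside a fundamental domain only trivially. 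The rest is the routine but lengthy verification that the matrices of Proposition~\ref{prop:Od} satisfy \eqref{YYX}--\eqref{XXY}, which is asserted there, together with the bookkeeping of index ranges above.
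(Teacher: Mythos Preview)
Your overall architecture is right and matches what the paper intends (the paper explicitly says the proof is ``similar'' to the even case and omits it). But there is a genuine gap in the irreducibility step.

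First, the bookkeeping claim is false: for $d$ even the sets
\[
\Bigl\{\tfrac{d+1}{2}-i : i=1,3,\dots,d-1\Bigr\}
\quad\text{and}\quad
\Bigl\{\tfrac{d+1}{2}-i : i=2,4,\dots,d\Bigr\}
\]
are \emph{not} equal (take $d=4$: you get $\{3/2,-1/2\}$ versus $\{1/2,-3/2\}$). What the odd-$i$ condition actually says is $a+b-c\notin -S$, i.e.\ $-a-b+c\notin S$ where $S=\{\tfrac{d+1}{2}-i:i=2,4,\dots,d\}$. So the conditions $\varphi_i\neq 0$ for $1\le i\le d$ yield exactly \emph{two} of the four constraints in $\mathbf{OP}_d$, namely $a+b+c\notin S$ and $-a-b+c\notin S$. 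They do not yield $a-b-c\notin S$ or $-a+b-c\notin S$.

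This means your assertion that ``a submodule fails to capture $v_0$ exactly when one of the $\varphi_i$ vanishes'' is too strong in both directions. For necessity you are missing two constraints; for sufficiency, the $L$-matrix argument in the even case (Theorem~\ref{thm:irrE}) needs the diagonal entries $L_{ii}=\prod_{h=1}^{d-i}\phi_h\prod_{h=1}^{i}\varphi_h$ to be nonzero, so it uses \emph{both} sequences $\{\varphi_i\}$ and $\{\phi_i\}$. The missing ingredient is the odd-dimensional analogue of Proposition~\ref{prop:iso2}: an \emph{unconditional} change of basis $w_i=\prod_{h=0}^{i-1}(X-\theta_{d-h})v_0$ under which $Y$ becomes upper bidiagonal with a second superdiagonal sequence $\{\phi_i\}$, and $\phi_i\neq 0$ supplies precisely the remaining two constraints. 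Your appeal to ``all four automorphic twists'' points the right way but is circular as stated, because the twist isomorphisms for $O_d$ recorded just before Theorem~\ref{thm:odd} are proved \emph{assuming} irreducibility; you need the unconditional basis change first, as in the even case.

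Once that is fixed, the surjectivity and injectivity outlines are fine. For injectivity the cleanest invariant is that on $O_d(a,b,c)$ the traces of $X$, $Y$, $Z$ are exactly $a$, $b$, $c$ (a direct computation from Proposition~\ref{prop:Od}), which recovers $(a,b,c)$ on the nose and explains why no $\{\pm1\}^3$-quotient appears in the odd case.
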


The proofs for Theorems \ref{thm:even} and \ref{thm:odd} are similar and tedious. Thus the rest of this paper is devoted to the proof of Theorem \ref{thm:even} and the proof of Theorem \ref{thm:odd} is omitted.

\section{An infinite-dimensional $\BI$-module and its universal property}\label{s:Verma}

The following notations are used throughout the rest of this paper: 
We let  $a,b,c,\delta$ be any scalars in $\F$ and let $d\geq 1$ be an odd integer. 
We let $\{v_i\}_{i=0}^d$ denote the $\F$-basis for $E_d(a,b,c)$ from Proposition \ref{prop:Ed}(i). We adopt the following parameters associated with $a,b,c,\delta$:
\begin{align}
\theta_i &=\frac{(-1)^i(2a-\delta+2i)}{2}
\qquad \hbox{for all $i\in \Z$},
\label{theta}
\\
\theta^*_i &=\frac{(-1)^i(2b-\delta+2i)}{2}
\qquad \hbox{for all $i\in \Z$},
\label{vtheta}
\\
\phi_i &=
\left\{
\begin{array}{ll}
\displaystyle{i (\delta-i+1)}
\qquad &\hbox{if $i$ is even},
\\
\displaystyle{c^2-\frac{(2b-2a-\delta+2i-1)^2}{4}}
\qquad &\hbox{if $i$ is odd}
\end{array}
\right.
\qquad \hbox{for all $i\in \Z$},
\label{phi}
\\
\varphi_i &=
\left\{
\begin{array}{ll}
\displaystyle{i (\delta-i+1)}
\qquad &\hbox{if $i$ is even},
\\
\displaystyle{c^2-\frac{(2a+2b-\delta+2i-1)^2}{4}}
\qquad &\hbox{if $i$ is odd}
\end{array}
\right.
\qquad \hbox{for all $i\in \Z$},
\label{vphi}
\\
\omega &=c^2-a^2-b^2+\frac{(\delta+1)^2}{4},
\label{omega}
\\
\omega^* &=a^2-b^2-c^2+\frac{(\delta+1)^2}{4},
\label{omega*}
\\
\omega^\diamond &=b^2-c^2-a^2+\frac{(\delta+1)^2}{4}.
\label{omaged}
\end{align}

\begin{prop}\label{prop:Verma}
There exists a $\BI$-module $M_\delta(a,b,c)$ satisfying the following conditions: 
\begin{enumerate}
\item There exists an $\mathbb F$-basis $\{m_i\}_{i=0}^\infty$ for $M_\delta(a,b,c)$ with respect to which the matrices representing $X$ and $Y$ are 
$$
\begin{pmatrix}
\theta_0 &  & & &  &{\bf 0}
\\ 
1 &\theta_1 
\\
&1 &\theta_2
 \\
 & &\cdot &\cdot
 \\
&  & &\cdot &\cdot
 \\
 {\bf 0} & & & &\cdot &\cdot
\end{pmatrix},
\qquad 
\begin{pmatrix}
\theta_0^* &\varphi_1 & & &  &{\bf 0}
\\ 
 &\theta_1^* &\varphi_2
\\
 &  &\theta_2^* &\cdot
 \\
  & & &\cdot &\cdot
  \\
&    & & &\cdot &\cdot
  \\
  {\bf 0} & &   & & &\cdot 
\end{pmatrix}
$$
respectively.

\item The elements $\kappa,\lambda,\mu$ act on $M_\delta(a,b,c)$ as scalar multiplication by
$\omega,\omega^*,\omega^\diamond$
respectively.
\end{enumerate}
\end{prop}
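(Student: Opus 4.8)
The plan is to \emph{construct} $M_\delta(a,b,c)$ explicitly as an infinite-dimensional $\F$-vector space with basis $\{m_i\}_{i=0}^\infty$, define the actions of $X$ and $Y$ by the displayed matrices, and then verify that these actions extend to a $\BI$-module structure and that $\kappa,\lambda,\mu$ act as claimed. By Lemma \ref{lem:XYkappa}, once $X,Y,\kappa$ act compatibly we get a genuine $\BI$-module, so the core of the argument is checking that the defining relations hold. By Lemma \ref{lem:BIpresentation}, it is equivalent (and cleaner) to check the presentation relations \eqref{YYX}--\eqref{r:BI3} for the operators on $M_\delta(a,b,c)$: define $\kappa,\lambda,\mu$ to be scalar multiplication by $\omega,\omega^*,\omega^\diamond$ respectively, so that \eqref{r:BI1}--\eqref{r:BI3} hold automatically (scalars commute with everything), and then it remains only to verify the two ``cubic'' relations \eqref{YYX} and \eqref{XXY} on each basis vector $m_i$.

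First I would record the matrix shapes: $X$ acts as $X m_i = \theta_i m_i + m_{i+1}$ (lower bidiagonal with $1$'s on the subdiagonal), and $Y$ acts as $Y m_i = \theta_i^* m_i + \varphi_i m_{i-1}$ (upper bidiagonal, with the convention $\varphi_0 = 0$ and $m_{-1}$ irrelevant). Both operators are locally finite in the sense that $X$ preserves no finite-dimensional subspace but acts with finite ``bandwidth'', so products like $Y^2X + 2YXY + XY^2$ applied to $m_i$ land in the span of $m_{i-2},\dots,m_{i+1}$; the relation \eqref{YYX} then becomes a finite list of scalar identities in $\theta_i,\theta_i^*,\varphi_i$ — one for each of the (at most) four bands — and likewise for \eqref{XXY}. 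I would then substitute the explicit formulas \eqref{theta}, \eqref{vtheta}, \eqref{vphi}, \eqref{omega}, \eqref{omega*} and check each band identity. The parity split in the definition of $\varphi_i$ means these identities must be checked separately for $i$ even and $i$ odd; this bookkeeping, rather than any conceptual difficulty, is the main obstacle. Concretely, the coefficient identities that arise are of the shape
\[
\varphi_i(\theta_{i-1}+\theta_i)^2 + \varphi_{i+1}(\theta_i+\theta_{i+1})\cdots = \text{(affine in }\theta_i,\theta_i^*\text{)},
\]
and one must also verify the ``diagonal'' identity that pins down $\mu = \omega^\diamond$ via \eqref{e:Z}, i.e.\ that $Z := \{X,Y\} - \omega$ together with $X$ satisfies $\{Z,X\} - Y = \omega^\diamond$.

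A useful shortcut: the finite-dimensional module $E_d(a,b,c)$ of Proposition \ref{prop:Ed} is precisely the specialization $\delta = d$ truncated to $\{v_i\}_{i=0}^d$, and for that module the relations were already verified (``straightforward using Lemma \ref{lem:BIpresentation}''). The band identities for \eqref{YYX}, \eqref{XXY} are polynomial in $\delta$ and hold for all odd integers $\delta = d \ge 1$ with the truncated parameters; since a polynomial in one variable vanishing at infinitely many points vanishes identically, the same identities hold for the formal parameter $\delta$ and for all indices $i \in \Z$ — away from the truncation, no boundary term ever appears because $M_\delta(a,b,c)$ is genuinely semi-infinite. Thus the verification for $M_\delta(a,b,c)$ reduces to the verification already implicit in Proposition \ref{prop:Ed}, together with the observation that the subdiagonal entries of $X$ are $1$ (never $0$) and the superdiagonal entries $\varphi_i$ are the ones from \eqref{vphi}, so no degeneration of the module structure occurs. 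This establishes both (i) and (ii).
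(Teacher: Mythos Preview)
Your proposal is correct and matches the paper's approach exactly: the paper's proof is the single sentence ``It is routine to verify the proposition using Lemma \ref{lem:BIpresentation},'' and you have spelled out precisely how that routine verification is organized (define $\kappa,\lambda,\mu$ as the scalars $\omega,\omega^*,\omega^\diamond$, so \eqref{r:BI1}--\eqref{r:BI3} are automatic, and then check the band coefficients of \eqref{YYX} and \eqref{XXY} on each $m_i$). Your polynomial-in-$\delta$ shortcut is a nice optional alternative; to make it airtight over an arbitrary field $\F$ (Proposition \ref{prop:Verma} carries no hypothesis on ${\rm char\,}\F$), note that the band identities have integer coefficients in $a,b,c,\delta$, so verifying them for infinitely many odd $d$ over $\mathbb{Q}$ forces them to vanish as integer polynomials and hence over every $\F$.
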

\begin{proof}
It is routine to verify the proposition using Lemma \ref{lem:BIpresentation}.
\end{proof}

Throughout the rest of this paper we let $\{m_i\}_{i=0}^\infty$ denote the $\F$-basis for $M_\delta(a,b,c)$ from Proposition \ref{prop:Verma}(i).

\begin{lem}\label{lem:mi}
For any integers $i,j$ with $0\leq i\leq j$ the following equation holds:
$$
m_{j+1}=\prod\limits_{h=i}^j (X-\theta_h)m_i.
$$
\end{lem}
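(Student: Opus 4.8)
The plan is to prove the identity $m_{j+1} = \prod_{h=i}^{j}(X-\theta_h)\,m_i$ by induction on $j-i$, reading off the action of $X$ from the matrix in Proposition~\ref{prop:Verma}(i). The point is that on the basis $\{m_i\}_{i=0}^\infty$ the element $X$ acts as a lower bidiagonal operator: comparing with the displayed matrix, $X m_i = m_{i+1} + \theta_i m_i$ for every $i \geq 0$, equivalently $(X-\theta_i)m_i = m_{i+1}$.

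First I would record this base relation $(X-\theta_i)m_i = m_{i+1}$ explicitly, which is precisely the $i=j$ case of the lemma (an empty-to-single-factor product, $\prod_{h=i}^{i}(X-\theta_h)m_i = (X-\theta_i)m_i = m_{i+1}$). For the inductive step, suppose the identity holds for some $j \geq i$, i.e. $m_{j+1} = \prod_{h=i}^{j}(X-\theta_h)m_i$. Applying $X - \theta_{j+1}$ to both sides and using the base relation with index $j+1$, namely $(X-\theta_{j+1})m_{j+1} = m_{j+2}$, yields
\[
m_{j+2} = (X-\theta_{j+1})\,m_{j+1} = (X-\theta_{j+1})\prod_{h=i}^{j}(X-\theta_h)\,m_i = \prod_{h=i}^{j+1}(X-\theta_h)\,m_i,
\]
where in the last step one must note that the factors $X-\theta_h$ all commute with one another (they are polynomials in the single element $X$), so the new factor can be absorbed into the product regardless of ordering convention. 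This closes the induction.

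The only mild subtlety, rather than a genuine obstacle, is a bookkeeping one: being careful that the product $\prod_{h=i}^{j}$ is interpreted with factors ordered so that the induction on the right end $j$ goes through cleanly, and checking the degenerate case $i = j$ against the stated range $0 \leq i \leq j$. Since everything takes place inside the commutative subalgebra $\F[X]$ acting on $M_\delta(a,b,c)$, the order of the factors is immaterial and no commutation relations of $\BI$ beyond those already encoded in Proposition~\ref{prop:Verma} are needed. Thus the proof is a short two-line induction once the bidiagonal action $X m_i = m_{i+1} + \theta_i m_i$ is extracted from the matrix form.
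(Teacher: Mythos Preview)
Your argument is correct and is exactly the content behind the paper's one-line ``Immediate from Proposition~\ref{prop:Verma}(i)'': you read off $(X-\theta_i)m_i=m_{i+1}$ from the lower-bidiagonal matrix of $X$ and iterate. There is nothing missing; the paper simply leaves this induction implicit.
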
 
\begin{proof}
Immediate from Proposition \ref{prop:Verma}(i).
\end{proof}

We shall give an alternative description for the $\BI$-module $M_\delta(a,b,c)$. To do this we begin with a Poincar\'{e}--Birkhoff--Witt basis for $\BI$.

\begin{lem}\label{lem:PBWbasis}
The elements 
$$
X^i Z^j Y^k
\mu^r \lambda^s \kappa^t
\qquad 
\hbox{for all $i,j,k,r,s,t\in \N$}
$$
are an $\F$-basis for $\BI$.
\end{lem}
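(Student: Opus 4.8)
The plan is to show first that the proposed elements span $\BI$, and then that they are linearly independent by exhibiting a faithful action on a space of the expected size.

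For spanning, I would start from the presentation in Lemma~\ref{lem:BIpresentation}, which tells us $\BI$ is generated by $X,Y,\kappa,\lambda,\mu$ with $\kappa,\lambda,\mu$ central. Since $Z = \{X,Y\}-\kappa$ by~\eqref{e:Z}, the algebra is spanned by words in $X,Y,Z$ with $\kappa,\lambda,\mu$ pushed to the right (they are central, so this is harmless). It then suffices to show that any monomial in $X,Y,Z$ can be rewritten, modulo lower-degree terms and central elements, as a linear combination of the ordered monomials $X^iZ^jY^k$. The key reduction rules come from the defining relations: $\{X,Y\}=Z+\kappa$ lets us replace $YX$ by $-XY+Z+\kappa$; $\{Y,Z\}=X+\lambda$ lets us replace $ZY$ by $-YZ+X+\lambda$; $\{Z,X\}=Y+\mu$ lets us replace $XZ$ by $-ZX+Y+\mu$. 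Wait — the last rule moves $X$ to the \emph{right} of $Z$, which is the wrong direction for the order $X^iZ^jY^k$. One must instead use it as $ZX = -XZ + Y + \mu$, i.e.\ read $\{Z,X\}=Y+\mu$ as $XZ+ZX=Y+\mu$, giving $ZX=-XZ+Y+\mu$; so $ZX$ reduces to terms with $X$ before $Z$ plus lower-degree terms. Thus the three rules $YX\to -XY+\cdots$, $ZY\to -YZ+\cdots$, $ZX\to -XZ+\cdots$ together form a rewriting system whose effect is to sort any word into the order $X$'s, then $Z$'s, then $Y$'s. A standard induction — first on total degree, then on the number of out-of-order adjacent pairs (a "bubble-sort" / inversion-count argument) — shows every word reduces to an $\F$-linear combination of the claimed basis elements, so they span.

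For linear independence, the cleanest route is to produce a $\BI$-module on which the images of the $X^iZ^jY^k\mu^r\lambda^s\kappa^t$ are linearly independent; equivalently, realize $\BI$ faithfully inside something concrete. A natural candidate is a polynomial-type module: let $\BI$ act on $\F[x,z,y,m,l,k]$ (six commuting variables) by declaring that $X,Y,Z,\mu,\lambda,\kappa$ act as the operator "multiply by $x,y,z,m,l,k$" plus correction terms of strictly lower degree chosen so that the defining relations hold — this is exactly the kind of deformation-of-commutative-algebra argument underlying PBW theorems, and the associated graded of such an action is the free commutative algebra on six generators. Since the monomials $x^iz^jy^km^rl^sk^t$ are linearly independent and the leading (top-degree) term of $X^iZ^jY^k\mu^r\lambda^s\kappa^t$ acting on the constant $1$ is precisely $x^iz^jy^km^rl^sk^t$, independence follows. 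Alternatively — and perhaps more in the spirit of this paper — one can invoke Bergman's Diamond Lemma directly: the rewriting system above has overlap ambiguities coming from triples like $ZYX$, $Z^2Y$, etc., and from the mixed words involving $\kappa,\lambda,\mu$; checking that each such ambiguity resolves (is confluent) is a finite computation, and confluence immediately yields that the irreducible words form a basis.

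The main obstacle is the confluence / well-definedness check — i.e.\ verifying that the ambiguous overlaps resolve consistently (Diamond Lemma route) or that the correction terms in the polynomial realization can actually be chosen to satisfy~\eqref{YYX}--\eqref{r:BI3} (deformation route). This is the genuine content of a PBW theorem and cannot be skipped, though for the Bannai--Ito relations it is a short, if fiddly, calculation; the overlap $ZYX$ is the critical one to examine, since all three sorting rules interact there. Everything else — the spanning argument and the passage from confluence to a basis — is routine. I would present the spanning argument in full, then dispatch independence by the Diamond Lemma, relegating the overlap computations to a remark or a one-line "it is straightforward to check" if the paper's style permits, which (given the proofs of Propositions~\ref{prop:Ed}, \ref{prop:Od}, and~\ref{prop:Verma}) it evidently does.
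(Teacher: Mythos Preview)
Your approach is essentially correct but takes a genuinely different and much longer route than the paper's. The paper does not carry out any Diamond Lemma or confluence computation at all; instead it simply cites an already-established PBW basis from \cite[Theorem~3.4]{Huang:R<BI}, namely that the monomials $X^iY^jZ^k\kappa^r\lambda^s\mu^t$ form an $\F$-basis for $\BI$, and then observes that the defining relations of $\BI$ are symmetric under the swap $Y\leftrightarrow Z$, $\kappa\leftrightarrow\mu$ (with $X$ and $\lambda$ fixed). This yields an $\F$-algebra automorphism of $\BI$ carrying $X^iY^jZ^k\kappa^r\lambda^s\mu^t$ to $X^iZ^jY^k\mu^r\lambda^s\kappa^t$, and an automorphism sends bases to bases --- done in two lines. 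Your argument, by contrast, re-proves the PBW theorem for $\BI$ from scratch; this is self-contained and would be the right thing to do if no prior PBW result were available, but here it duplicates work already in the literature.

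One small slip in your rewriting system: for the target ordering $X^iZ^jY^k$ the pair $ZY$ is already in order, so the rule you need from $\{Y,Z\}=X+\lambda$ is $YZ\to -ZY+X+\lambda$, not $ZY\to -YZ+\cdots$ as you wrote. With that correction your three sorting rules are $YX\to -XY+Z+\kappa$, $YZ\to -ZY+X+\lambda$, $ZX\to -XZ+Y+\mu$, and the bubble-sort/Diamond Lemma argument goes through as you describe.
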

\begin{proof}
Recall from \cite[Theorem 3.4]{Huang:R<BI} that 
\begin{gather}\label{PBW}
X^i Y^j Z^k
\kappa^r \lambda^s \mu^t
\qquad 
\hbox{for all $i,j,k,r,s,t\in \N$}
\end{gather}
form an $\F$-basis for $\BI$. By Definition \ref{defn:BI} there exists a unique $\F$-algebra automorphism of $\BI$ that sends $X,Y,Z,\kappa,\lambda,\mu$ to $X,Z,Y,\mu,\lambda,\kappa$ respectively. The lemma follows by applying the automorphism to (\ref{PBW}). 
\end{proof}

Let $I_\delta(a,b,c)$ denote the left ideal of $\BI$ generated by 
\begin{gather}
Y-\theta_0^*,
\label{I1}
\\
(Y-\theta_1^*)(X-\theta_0)-\varphi_1,
\label{I2}
\\
\kappa-\omega, 
\quad 
\lambda-\omega^*,
\quad 
\mu-\omega^\diamond.
\label{I3}
\end{gather}

\begin{lem}\label{lem:cosets}
For all $n\in \N$ the following hold:
\begin{enumerate}
\item $YX^n+I_\delta(a,b,c)$ is an $\F$-linear combination of $X^i+I_\delta(a,b,c)$ for all $0\leq i\leq n$.

\item $ZX^n+I_\delta(a,b,c)$ is an $\F$-linear combination of $X^i+I_\delta(a,b,c)$ for all $0\leq i\leq n+1$.

\item $Z^n+I_\delta(a,b,c)$ is an $\F$-linear combination of $X^i+I_\delta(a,b,c)$ for all $0\leq i\leq n$.
\end{enumerate}
\end{lem}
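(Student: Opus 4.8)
The plan is to prove all three parts simultaneously by induction on $n$, using the defining relations of $\BI$ modulo the left ideal $I_\delta(a,b,c)$. The key observation is that modulo $I_\delta(a,b,c)$ we have $\kappa\equiv\omega$, $\lambda\equiv\omega^*$, $\mu\equiv\omega^\diamond$ (scalars), $Y\equiv\theta_0^*$, and $YX\equiv\theta_1^*X+(\varphi_1-\theta_1^*\theta_0)$ from generators (\ref{I1}), (\ref{I2}); moreover $Z\equiv\{X,Y\}-\kappa\equiv\{X,Y\}-\omega$ by (\ref{e:Z}), so the base cases $n=0,1$ of all three statements are immediate. The heart of the argument is the inductive step, and the natural tool is relation (\ref{YYX}): $Y^2X+2YXY+XY^2-X=2\kappa Y+\lambda$, which modulo $I_\delta(a,b,c)$ becomes a relation expressing $Y^2X$ (equivalently, allowing one to ``pull a $Y$ past'' a power of $X$) in terms of lower-degree-in-$X$ expressions in $Y$ and $X$.

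First I would prove (i) by induction on $n$. Assume $YX^i+I_\delta(a,b,c)$ is an $\F$-linear combination of $\{X^j+I_\delta(a,b,c)\}_{j=0}^i$ for all $i\leq n$. Then $YX^{n+1}=(YX^n)X$, and substituting the inductive hypothesis reduces this to a linear combination of $X^{j+1}$ for $j\leq n$, i.e.\ a combination of $X^j$ for $j\leq n+1$; wait — this shows $YX^{n+1}\in\mathrm{span}\{X^j:j\le n+1\}$ directly once one knows $YX^i$ for $i\le n$, which is exactly the inductive step, but one must be careful that the $j=0$ term $Y\cdot X^0=Y\equiv\theta_0^*$ is handled by (\ref{I1}). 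Actually the cleaner route is: $YX^{n+1}=Y X\cdot X^n$; I would instead move the $Y$ rightward using (\ref{YYX}) rewritten as $Y^2X\equiv -2YXY-XY^2+X+2\omega Y+\omega^*$ modulo $I_\delta(a,b,c)$. Combined with the fact that $YX\equiv \alpha X+\beta$ for scalars $\alpha,\beta$ (from (\ref{I2})), an induction on $n$ shows $YX^n$ lies in the span of $X^0,\dots,X^n$: one writes $YX^n = (YX)X^{n-1}\equiv \alpha X\cdot X^{n-1} + \beta X^{n-1}$? That last step is false since $YX$ acting then followed by $X^{n-1}$ on the right is not the same as first multiplying — so the correct bookkeeping is to treat $YX^n$ as a word and repeatedly apply the commutation relation $YX = XY + (Z-\text{something})$; precisely, $\{X,Y\}=Z+\kappa$ gives $YX = Z+\omega - XY$ modulo the ideal, so $YX^n = (Z+\omega)X^{n-1} - XYX^{n-1}$, and then parts (i) and (ii) get proved together by a joint induction on $n$: assuming (i) and (ii) up to $n-1$, the identity $YX^n=(Z+\omega)X^{n-1}-X(YX^{n-1})$ expresses $YX^n$ in terms of $ZX^{n-1}$ (span of $X^j$, $j\le n$, by (ii) at $n-1$) and $X\cdot(YX^{n-1})$ (span of $X^j$, $j\le n$, by (i) at $n-1$), establishing (i) at $n$. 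Symmetrically, using $ZX = \{Z,X\}-XZ = Y+\mu - XZ\equiv (Y+\omega^\diamond)-XZ$ modulo the ideal (from (\ref{mu})), one gets $ZX^n=(Y+\omega^\diamond)X^{n-1}-X(ZX^{n-1})$, which by (i) at $n-1$ and (ii) at $n-1$ lies in the span of $X^j$ for $j\le n+1$, giving (ii) at $n$. Finally (iii) follows from (ii): $Z^n = Z^{n-1}Z$, and writing $Z\equiv \{X,Y\}-\omega \equiv XY+YX-\omega$ modulo the ideal, together with (i), expresses $Z$ itself as a combination of $X^j$, $j\le 1$ — no: $XY$ is not yet controlled. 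Instead I use $Z^n = Z(Z^{n-1})$ and the inductive hypothesis $Z^{n-1}\equiv \sum_{j\le n-1} c_j X^j$, so $Z^n\equiv \sum c_j ZX^j$, and each $ZX^j$ is in the span of $X^i$ for $i\le j+1\le n$ by (ii); this gives (iii) at $n$ provided the top-degree terms do not overshoot, which one checks from the explicit leading behaviour.

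The main obstacle I anticipate is the bookkeeping of leading terms: showing that $ZX^n+I_\delta$ stays within degree $n+1$ in $X$ (rather than $n+2$) requires verifying that in the relation $ZX^n=(Y+\omega^\diamond)X^{n-1}-X(ZX^{n-1})$ the degree-$(n+1)$ contributions come only from $X\cdot(ZX^{n-1})$ and from the $Y X^{n-1}$ term's image under (i), and that no cancellation-free degree-$(n+2)$ term is created — this is automatic from the induction but must be stated carefully. A secondary subtlety is confirming that the reduction of $Y$-words via $\{X,Y\}\equiv Z+\omega$ is legitimate inside the \emph{left} ideal: since $I_\delta(a,b,c)$ is a left ideal, we may only left-multiply its generators, so every manipulation must be phrased as ``$r - r' \in I_\delta(a,b,c)$ where $r'$ is obtained from $r$ by left-multiplying a generator'', and in particular the relations (\ref{kappa})--(\ref{mu}), being two-sided relations in $\BI$ itself, may be used freely. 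With these cautions the induction is routine, and I would present it as a single combined induction on $n$ covering (i) and (ii), then deduce (iii).
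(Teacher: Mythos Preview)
Your final approach is correct but genuinely different from the paper's. After some false starts you settle on a \emph{joint} induction on (i) and (ii) using only the first–order anticommutator relations: from $\{X,Y\}=Z+\kappa$ you get $YX^{n}=(Z+\omega)X^{n-1}-X\,(YX^{n-1})$, and from $\{Z,X\}=Y+\mu$ you get $ZX^{n}=(Y+\omega^\diamond)X^{n-1}-X\,(ZX^{n-1})$; assuming (i) and (ii) at $n-1$ these immediately yield (i) and (ii) at $n$, with the correct degree bounds, and (iii) then follows exactly as you indicate. By contrast, the paper proves (i) \emph{on its own} by right-multiplying the second-order relation (\ref{XXY}), $X^2Y+2XYX+YX^2-Y=2\kappa X+\mu$, by $X^{n-2}$, which expresses $YX^n$ in terms of $YX^{n-1}$ and $YX^{n-2}$ (plus powers of $X$); it then deduces (ii) from (i) via $Z=\{X,Y\}-\kappa$, and (iii) from (ii) exactly as you do. Your route is slightly more economical in that it never invokes the derived relations (\ref{YYX})--(\ref{XXY}), only the defining anticommutators and the centrality of $\kappa,\mu$; the price is that (i) and (ii) become entangled and must be carried together through the induction. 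The paper's route decouples (i) completely but needs the quadratic relation. Both are short; your worry about degree $n+2$ terms in (ii) is unfounded, since as you yourself note the bound $\le n+1$ is automatic from the inductive hypothesis.
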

\begin{proof}
(i): Proceed by induction on $n$.
Since $I_\delta(a,b,c)$ contains (\ref{I1}) it is true when $n=0$. Since $I_\delta(a,b,c)$ contains (\ref{I1}) and (\ref{I2}) it is true when $n=1$.
Now suppose $n\geq 2$. 
Right multiplying either side of (\ref{XXY}) by $X^{n-2}$ yields that 
\begin{gather*}
X^2 Y X^{n-2}+2XYX^{n-1}+Y X^n-Y X^{n-2}
=2 X^{n-1}\kappa+X^{n-2}\mu.
\end{gather*}
Since $I_\delta(a,b,c)$ contains (\ref{I3}) it follows that $YX^n$ is congruent to 
\begin{gather}\label{YXn}
Y X^{n-2}-X^2 Y X^{n-2}-2XYX^{n-1}
+2\omega X^{n-1}+\omega^\diamond X^{n-2}
\end{gather}
modulo $I_\delta(a,b,c)$.
By induction hypothesis the term (\ref{YXn})
is congruent to an $\F$-linear combination of $X^i$ for all $0\leq i\leq n$ modulo $I_\delta(a,b,c)$. Therefore (i) follows.

(ii): Right multiplying either side of (\ref{kappa}) by $X^n$ yields that 
\begin{align}\label{ZXn+I}
ZX^n=XYX^n+YX^{n+1}-X^n \kappa 
\pmod{I_\delta(a,b,c)}.
\end{align}
Since $I_\delta(a,b,c)$ contains (\ref{I3}) and by (i), the right-hand side of (\ref{ZXn+I}) is congruent to an $\F$-linear combination of $X^i$ for all $0\leq i\leq n+1$ modulo $I_\delta(a,b,c)$. Therefore (ii) follows.

(iii): Proceed by induction on $n$. There is nothing to prove for $n=0$. Suppose $n\geq 1$. By induction hypothesis $Z^n$ is congruent to an $\F$-linear combination of 
\begin{gather}\label{ZXi+I}
ZX^i
\qquad 
\hbox{for all $0\leq i\leq n-1$}
\end{gather}
modulo $I_\delta(a,b,c)$. 
By (ii) each of (\ref{ZXi+I}) is congruent to an $\F$-linear combination of $X^k$ for all $0\leq k\leq n$ modulo $I_\delta(a,b,c)$. Therefore (iii) follows.
\end{proof}

\begin{lem}\label{lem:basis_BI/I}
The $\F$-vector space $\BI/I_\delta(a,b,c)$ is spanned by 
$$
X^i+I_\delta(a,b,c)
\qquad 
\hbox{for all $i\in \N$}.
$$
\end{lem}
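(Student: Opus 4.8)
The plan is to use the Poincaré--Birkhoff--Witt basis from Lemma~\ref{lem:PBWbasis} together with the structure of the ideal $I_\delta(a,b,c)$ to reduce an arbitrary element of $\BI$, modulo $I_\delta(a,b,c)$, to an $\F$-linear combination of powers of $X$. By Lemma~\ref{lem:PBWbasis}, every element of $\BI$ is an $\F$-linear combination of monomials $X^i Z^j Y^k \mu^r \lambda^s \kappa^t$ with $i,j,k,r,s,t\in\N$. Since $I_\delta(a,b,c)$ contains the elements in (\ref{I3}), namely $\kappa-\omega$, $\lambda-\omega^*$, $\mu-\omega^\diamond$, each factor $\mu^r\lambda^s\kappa^t$ is congruent modulo $I_\delta(a,b,c)$ to the scalar $(\omega^\diamond)^r(\omega^*)^s\omega^t$. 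Hence $\BI/I_\delta(a,b,c)$ is spanned by the cosets of $X^i Z^j Y^k$ for $i,j,k\in\N$.

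Next I would peel off the trailing $Y^k$. The key observation is that $Y+I_\delta(a,b,c)=\theta_0^*+I_\delta(a,b,c)$ by (\ref{I1}), so $X^iZ^jY^k$ is, up to the scalar $(\theta_0^*)^k$, congruent to $X^iZ^j$ modulo $I_\delta(a,b,c)$ --- one just needs to move the factors $Y$ past nothing (they are already at the right end) and apply (\ref{I1}) repeatedly, being careful that after replacing the rightmost $Y$ by the scalar $\theta_0^*$ the next $Y$ is again at the right end. So $\BI/I_\delta(a,b,c)$ is spanned by the cosets of $X^iZ^j$ for $i,j\in\N$. Now invoke Lemma~\ref{lem:cosets}(iii): $Z^j+I_\delta(a,b,c)$ is an $\F$-linear combination of $X^h+I_\delta(a,b,c)$ for $0\le h\le j$. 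But to finish I need to handle $X^i Z^j$, not just $Z^j$ --- I would instead argue by induction on $j$ directly on the monomials $X^iZ^j$: for $j=0$ there is nothing to do, and for $j\ge 1$ write $X^iZ^j = X^i Z\cdot Z^{j-1}$ and use Lemma~\ref{lem:cosets}(ii) in the form $ZX^n+I_\delta(a,b,c)$ spans within $\{X^k+I_\delta(a,b,c):0\le k\le n+1\}$, after first commuting. Cleaner: by Lemma~\ref{lem:cosets}(iii) the coset of $Z^j$ lies in the span of $\{X^h+I_\delta(a,b,c)\}$, so the coset of $Z^j Y^k$ does too (using (\ref{I1})); then multiplying on the left by $X^i$ keeps us in the span of $\{X^{i+h}+I_\delta(a,b,c)\}$. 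This shows the coset of each PBW basis element lies in $\mathrm{span}_\F\{X^i+I_\delta(a,b,c):i\in\N\}$, proving the lemma.

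The main subtlety I anticipate is bookkeeping with the noncommutativity: Lemma~\ref{lem:cosets} is stated for $YX^n$, $ZX^n$, $Z^n$ with the relevant generator on the \emph{left} of a power of $X$, so when reducing a PBW monomial $X^iZ^jY^k$ I must make sure I apply the lemmas in an order compatible with that left-to-right shape --- in particular reducing $Z^j$ (or $Z^jY^k$) \emph{first} to a polynomial in $X$, and only then left-multiplying by the already-present $X^i$, which costs nothing since $X^i\cdot X^k=X^{i+k}$. No reordering of $X$ past $Z$ or $Y$ is needed if the reduction is carried out in this order, so the argument stays short. Everything else is an immediate consequence of Lemmas~\ref{lem:PBWbasis}, \ref{lem:cosets} and the defining generators (\ref{I1})--(\ref{I3}) of $I_\delta(a,b,c)$.
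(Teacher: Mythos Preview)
Your argument is correct and follows essentially the same route as the paper: reduce the PBW monomials $X^iZ^jY^k\mu^r\lambda^s\kappa^t$ first by replacing $\kappa,\lambda,\mu$ with scalars via (\ref{I3}), then strip off $Y^k$ on the right via (\ref{I1}), and finally apply Lemma~\ref{lem:cosets}(iii) to $Z^j$ before left-multiplying by $X^i$. The paper's proof is just a terser version of exactly this sequence of reductions.
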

\begin{proof}
By Lemma \ref{lem:PBWbasis} the elements 
$$
X^i Z^j Y^k
\mu^r \lambda^s \kappa^t+I_\delta(a,b,c)
\qquad 
\hbox{for all $i,j,k,r,s,t\in \N$}
$$
span $\BI/I_\delta(a,b,c)$.  Since $I_\delta(a,b,c)$ contains (\ref{I1}) and (\ref{I3}) it follows that 
\begin{gather}\label{XZcoset}
X^i Z^j+I_\delta(a,b,c)
\qquad 
\hbox{for all $i,j\in \N$}
\end{gather}
span $\BI/I_\delta(a,b,c)$. Applying Lemma \ref{lem:cosets}(iii) each of (\ref{XZcoset}) is an $\F$-linear combination of $X^k+I_\delta(a,b,c)$ for all $k\in \N$. The lemma follows.
\end{proof}

\begin{thm}\label{thm:universal_prop}
There exists a unique $\BI$-module homomorphism
$$
\Phi:\BI/I_\delta(a,b,c) 
\to M_\delta(a,b,c)
$$ 
that sends $1+I_\delta(a,b,c)$ to $m_0$. Moreover $\Phi$ is an isomorphism.
\end{thm}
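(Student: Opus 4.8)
The plan is to construct $\Phi$ via the universal property of the quotient module, then prove it is an isomorphism by comparing dimensions at each ``level'' of the natural filtration by powers of $X$.

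First I would construct $\Phi$. Since $M_\delta(a,b,c)$ is a $\BI$-module, there is a unique $\BI$-module homomorphism $\widetilde\Phi:\BI\to M_\delta(a,b,c)$ sending $1$ to $m_0$, namely $u\mapsto u\,m_0$. To see that $\widetilde\Phi$ factors through $\BI/I_\delta(a,b,c)$, it suffices to check that each of the generators \eqref{I1}--\eqref{I3} of $I_\delta(a,b,c)$ annihilates $m_0$. For \eqref{I1} this is immediate from Proposition \ref{prop:Verma}(i), which gives $Ym_0=\theta_0^*m_0$; for \eqref{I2}, Lemma \ref{lem:mi} (or direct inspection of the matrices) gives $(X-\theta_0)m_0=m_1$ and then $(Y-\theta_1^*)m_1=\varphi_1 m_0$; for \eqref{I3} this is Proposition \ref{prop:Verma}(ii). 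Hence $\widetilde\Phi$ kills the left ideal generated by these elements, inducing the desired $\Phi:\BI/I_\delta(a,b,c)\to M_\delta(a,b,c)$ with $\Phi(1+I_\delta(a,b,c))=m_0$; uniqueness is clear since $1+I_\delta(a,b,c)$ generates the quotient as a $\BI$-module.

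Next I would prove surjectivity. By Lemma \ref{lem:mi} with $i=0$, we have $m_{j+1}=\prod_{h=0}^{j}(X-\theta_h)m_0 = \Phi\bigl(\prod_{h=0}^{j}(X-\theta_h)+I_\delta(a,b,c)\bigr)$, and $m_0=\Phi(1+I_\delta(a,b,c))$, so every basis vector $m_i$ lies in the image; thus $\Phi$ is onto. For injectivity, observe that by Lemma \ref{lem:basis_BI/I} the quotient $\BI/I_\delta(a,b,c)$ is spanned by the cosets $X^i+I_\delta(a,b,c)$, $i\in\N$. On the other hand, $\Phi(X^i+I_\delta(a,b,c)) = X^i m_0$, and from the (lower-triangular, subdiagonal-$1$) shape of the matrix of $X$ in Proposition \ref{prop:Verma}(i) one sees that $X^i m_0$ is $m_i$ plus an $\F$-linear combination of $m_0,\dots,m_{i-1}$; in particular the images $\{X^i m_0\}_{i=0}^{\infty}$ are related to the basis $\{m_i\}_{i=0}^{\infty}$ by a unitriangular (hence invertible) change of coordinates, so they are $\F$-linearly independent in $M_\delta(a,b,c)$. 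Therefore the spanning set $\{X^i+I_\delta(a,b,c)\}_{i\in\N}$ maps to a linearly independent set, which forces it to be a basis of $\BI/I_\delta(a,b,c)$ and forces $\Phi$ to be injective. Combining, $\Phi$ is a $\BI$-module isomorphism.

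The only real subtlety — and the step I would be most careful about — is the linear independence of $\{X^i m_0\}$, i.e. making sure that ``$X^i m_0 = m_i + \text{lower terms}$'' is stated and used correctly; this is where Lemma \ref{lem:mi} does the work, since it identifies $\prod_{h=0}^{j}(X-\theta_h)m_0$ exactly with $m_{j+1}$, and expanding the product shows $X^{j+1}m_0$ differs from $m_{j+1}$ only by terms $X^h m_0$ with $h\le j$; an easy induction then upgrades this to the unitriangular relation between $\{X^i m_0\}$ and $\{m_i\}$. Everything else is formal: the factorization of $\widetilde\Phi$ is a routine generator check, and surjectivity follows directly from Lemma \ref{lem:mi}. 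No dimension count or delicate representation-theoretic input is needed here — this theorem is the ``universal property'' packaging that later sections will exploit.
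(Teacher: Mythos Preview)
Your argument is correct and is essentially the paper's own proof: you construct $\Phi$ by factoring the map $u\mapsto u\,m_0$ through the quotient after checking the generators of $I_\delta(a,b,c)$ kill $m_0$, and you prove bijectivity by combining Lemma~\ref{lem:basis_BI/I} (spanning of $\BI/I_\delta(a,b,c)$ by powers of $X$) with the unitriangular relation between $\{X^i m_0\}$ and $\{m_i\}$ coming from Lemma~\ref{lem:mi}. The only cosmetic difference is that the paper works directly with the cosets $\prod_{h=0}^{i-1}(X-\theta_h)+I_\delta(a,b,c)$ (which map exactly to $m_i$) rather than with $X^i+I_\delta(a,b,c)$, but the two spanning sets are related by precisely the unitriangular change you describe.
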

\begin{proof}
Consider the $\BI$-module homomorphism $\Psi:\BI\to M_\delta(a,b,c)$ that sends $1$ to $m_0$. By Proposition \ref{prop:Verma}(i), (\ref{I1}) and (\ref{I2}) are in the kernel of $\Psi$. By Proposition \ref{prop:Verma}(ii) each of (\ref{I3}) is in the kernel of $\Psi$. It follows that $I_\delta(a,b,c)$ is contained in the kernel of $\Psi$. Hence $\Psi$ induces a  $\BI$-module homomorphism $\BI/I_\delta(a,b,c) 
\to M_\delta(a,b,c)$ that maps $1+I_\delta(a,b,c)$ to $m_0$. The existence of $\Phi$ follows. Since the $\BI$-module $\BI/I_\delta(a,b,c)$ is generated by $1+I_\delta(a,b,c)$ the uniqueness follows.

By Proposition \ref{prop:Verma}(i) the homomorphism $\Phi$ maps
\begin{eqnarray}\label{X-theta+I}
\prod_{h=0}^{i-1} (X-\theta_h)+I_\delta(a,b,c)
\end{eqnarray}
to $m_i$ for all $i\in \N$. Since the vectors $\{m_i\}_{i\in \N}$ are linearly independent, it follows that (\ref{X-theta+I}) for all $i\in \N$ are linearly independent. By Lemma \ref{lem:basis_BI/I} the cosets (\ref{X-theta+I}) for all $i\in \N$ span $\BI/I_\delta(a,b,c)$ and hence these cosets form an $\F$-basis for $\BI/I_\delta(a,b,c)$. Since $\Phi$ maps an $\F$-basis for $\BI/I_\delta(a,b,c)$ to an $\F$-basis for $M_\delta(a,b,c)$, it follows that $\Phi$ is an isomorphism.
\end{proof}

As a consequence of Theorem \ref{thm:universal_prop} the $\BI$-module $M_\delta(a,b,c)$ has the following universal property:

\begin{prop}\label{prop:universal_prop}
If $V$ is a $\BI$-module which contains a vector $v$ satisfying
\begin{gather*}
Yv=\theta_0^*v,
\\
(Y-\theta_1^*)(X-\theta_0)v=\varphi_1 v,
\\
\kappa v=\omega v, 
\quad 
\lambda v=\omega^* v,
\quad 
\mu v=\omega^\diamond v,
\end{gather*}
then there exists a unique $\BI$-module homomorphism $M_\delta(a,b,c)\to V$ that sends $m_0$ to $v$.
\end{prop}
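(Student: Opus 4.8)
The plan is to deduce the statement immediately from Theorem~\ref{thm:universal_prop}, which identifies $M_\delta(a,b,c)$ with $\BI/I_\delta(a,b,c)$ via the isomorphism $\Phi$ sending $1+I_\delta(a,b,c)$ to $m_0$. First I would consider the $\BI$-module homomorphism $\Psi_V:\BI\to V$ determined by $\Psi_V(r)=rv$ for all $r\in\BI$; this is well defined because $V$ is a left $\BI$-module, and its kernel $\ker\Psi_V=\{r\in\BI : rv=0\}$ is a left ideal of $\BI$.

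Next I would check that each of the generators (\ref{I1})--(\ref{I3}) of $I_\delta(a,b,c)$ lies in $\ker\Psi_V$. This is exactly what the three displayed hypotheses on $v$ assert, namely $(Y-\theta_0^*)v=0$, $\bigl((Y-\theta_1^*)(X-\theta_0)-\varphi_1\bigr)v=0$, and $(\kappa-\omega)v=(\lambda-\omega^*)v=(\mu-\omega^\diamond)v=0$. Since $\ker\Psi_V$ is a left ideal containing the generators of $I_\delta(a,b,c)$, we get $I_\delta(a,b,c)\subseteq\ker\Psi_V$, so $\Psi_V$ factors through the quotient and yields a $\BI$-module homomorphism $\overline{\Psi_V}:\BI/I_\delta(a,b,c)\to V$ with $\overline{\Psi_V}(1+I_\delta(a,b,c))=v$. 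Composing with $\Phi^{-1}$ from Theorem~\ref{thm:universal_prop} gives the $\BI$-module homomorphism $\overline{\Psi_V}\circ\Phi^{-1}:M_\delta(a,b,c)\to V$, which sends $m_0$ to $v$; this establishes existence.

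For uniqueness, I would observe that $M_\delta(a,b,c)$ is generated as a $\BI$-module by $m_0$: by Lemma~\ref{lem:mi} with $i=0$, every basis vector $m_{j+1}$ equals $\prod_{h=0}^{j}(X-\theta_h)m_0$, so the $\BI$-submodule generated by $m_0$ contains all $m_i$ and hence is all of $M_\delta(a,b,c)$. Therefore any $\BI$-module homomorphism out of $M_\delta(a,b,c)$ is determined by the image of $m_0$, so at most one homomorphism can send $m_0$ to $v$. There is no real obstacle here: the content is entirely packaged in Theorem~\ref{thm:universal_prop}, and the only points requiring care are that the three hypotheses on $v$ correspond precisely to the listed generators of the left ideal $I_\delta(a,b,c)$, and that $m_0$ generates $M_\delta(a,b,c)$.
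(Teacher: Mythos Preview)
Your proposal is correct and is exactly the intended argument: the paper presents this proposition as an immediate consequence of Theorem~\ref{thm:universal_prop} without spelling out a proof, and what you wrote is precisely the standard way to unpack that consequence. The only minor remark is that uniqueness also follows directly from the isomorphism $\Phi$ itself (since $\BI/I_\delta(a,b,c)$ is visibly generated by $1+I_\delta(a,b,c)$), so invoking Lemma~\ref{lem:mi} is not strictly necessary---but your argument via Lemma~\ref{lem:mi} is equally valid.
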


From now on until the end of this paper, we set $\delta=d$. Let
$$
N_d (a,b,c)
$$
denote the $X$-cyclic $\F$-subspace of $M_d(a,b,c)$ generated by $m_{d+1}$.

\begin{lem}\label{lem:N}
$N_d(a,b,c)$ is a $\BI$-submodule of $M_d(a,b,c)$
with the $\F$-basis
$
\{m_i\}_{i=d+1}^\infty.
$
\end{lem}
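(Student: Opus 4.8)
The plan is to first identify the $\F$-basis of $N_d(a,b,c)$ and then verify $\BI$-invariance via Lemma~\ref{lem:XYkappa}. By definition $N_d(a,b,c)={\rm span}_\F\{X^n m_{d+1}\mid n\in\N\}$. On one hand, Proposition~\ref{prop:Verma}(i) gives $X m_i=\theta_i m_i+m_{i+1}$, so a trivial induction on $n$ shows $X^n m_{d+1}\in{\rm span}\{m_{d+1},m_{d+2},\ldots,m_{d+1+n}\}$; hence $N_d(a,b,c)\subseteq{\rm span}\{m_i\mid i\geq d+1\}$. On the other hand, applying Lemma~\ref{lem:mi} with $i=d+1$ and $j=d+n$ writes $m_{d+1+n}=\prod_{h=d+1}^{d+n}(X-\theta_h)m_{d+1}$, which lies in $N_d(a,b,c)$; this gives the reverse containment. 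Since $\{m_i\}_{i=d+1}^\infty$ is a subset of the $\F$-basis $\{m_i\}_{i=0}^\infty$ of $M_d(a,b,c)$, it is linearly independent, hence an $\F$-basis of $N_d(a,b,c)$.

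It remains to show $N_d(a,b,c)$ is a $\BI$-submodule. By Lemma~\ref{lem:XYkappa} the algebra $\BI$ is generated by $X,Y,\kappa$, so it suffices to check that $N_d(a,b,c)$ is invariant under each of these. Invariance under $X$ is immediate from the definition of $N_d(a,b,c)$, and $\kappa$ acts on $M_d(a,b,c)$ as the scalar $\omega$ by Proposition~\ref{prop:Verma}(ii), so that case is trivial. For $Y$, Proposition~\ref{prop:Verma}(i) gives $Y m_i=\varphi_i m_{i-1}+\theta_i^* m_i$ for $i\geq 1$. When $i\geq d+2$ both $m_{i-1}$ and $m_i$ belong to $N_d(a,b,c)$, so $Y m_i\in N_d(a,b,c)$. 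The only borderline case is $i=d+1$, where the coefficient of $m_d\notin N_d(a,b,c)$ must be shown to vanish: here we invoke the standing hypothesis $\delta=d$ together with $d$ odd, so that $d+1$ is even and, by the definition (\ref{vphi}) of $\varphi_i$, $\varphi_{d+1}=(d+1)(\delta-(d+1)+1)=(d+1)(d-d)=0$. Thus $Y m_{d+1}=\theta_{d+1}^* m_{d+1}\in N_d(a,b,c)$, and $Y$ preserves $N_d(a,b,c)$, completing the proof.

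I do not anticipate a real obstacle: the entire argument is bookkeeping with the bidiagonal actions of $X$ and $Y$ from Proposition~\ref{prop:Verma}(i), and the one substantive point—the one place the hypotheses ``$d$ odd'' and ``$\delta=d$'' are actually used—is the vanishing $\varphi_{d+1}=0$, which is an immediate parity computation. The main thing to take care with is simply to record that vanishing explicitly and to invoke Lemma~\ref{lem:XYkappa} so that only $X$, $Y$, and the central element $\kappa$ need be checked.
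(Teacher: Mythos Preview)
Your proof is correct and follows essentially the same approach as the paper: identify $N_d(a,b,c)$ with ${\rm span}_\F\{m_i\}_{i=d+1}^\infty$ via Lemma~\ref{lem:mi} and the bidiagonal action of $X$, then check $\BI$-invariance via Lemma~\ref{lem:XYkappa} using that $\varphi_{d+1}=0$ (since $d+1$ is even and $\delta=d$) for the $Y$-action and Proposition~\ref{prop:Verma}(ii) for $\kappa$. Your write-up is slightly more explicit about the double inclusion and the parity computation, but the argument is the same.
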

\begin{proof}
Let $N$ denote the $\F$-subspace of $M_d(a,b,c)$ spanned by $\{m_i\}_{i=d+1}^\infty$. It follows from Lemma \ref{lem:mi} that 
$$
(X-\theta_i)m_i=m_{i+1}
\qquad 
\hbox{for all $i\geq d+1$}.
$$
Hence $N$ is an $X$-invariant $\F$-subspace of $N_{d}(a,b,c)$. It follows from the construction of $N_d(a,b,c)$ that $N=N_d(a,b,c)$. Therefore $N_d(a,b,c)$ has the $\F$-basis
$
\{m_i\}_{i=d+1}^\infty.
$
From Proposition \ref{prop:Verma}(i) we see that 
$$
(Y-\theta_i^*)m_i=\varphi_i m_{i-1}
\qquad 
\hbox{for all $i\geq d+1$}.
$$
By (\ref{vphi}) the scalar $\varphi_{d+1}=0$ under the setting $\delta=d$. Hence $N_d(a,b,c)$ is $Y$-invariant. By Proposition \ref{prop:Verma}(ii) the element $\kappa$ acts on $N_d(a,b,c)$ as scalar multiplication. 
Therefore $N_d(a,b,c)$ is a $\BI$-module by Lemma \ref{lem:XYkappa}. The lemma follows.
\end{proof}

\begin{lem}\label{lem:E}
There exists a unique $\BI$-module isomorphism 
$$
M_d(a,b,c)/N_d(a,b,c)\to E_d(a,b,c)
$$ 
that sends $m_i+N_d(a,b,c)$ to $v_i$ for all $0\leq i\leq d$.
\end{lem}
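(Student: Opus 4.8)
The plan is to produce the isomorphism by combining the universal property of $M_d(a,b,c)$ (Proposition~\ref{prop:universal_prop}) with the quotient $\BI$-module $M_d(a,b,c)/N_d(a,b,c)$ and a dimension count. First I would observe that $N_d(a,b,c)$ is a $\BI$-submodule by Lemma~\ref{lem:N}, so the quotient $M_d(a,b,c)/N_d(a,b,c)$ is a $\BI$-module with $\F$-basis $\{m_i+N_d(a,b,c)\}_{i=0}^d$; in particular it is $(d+1)$-dimensional, the same dimension as $E_d(a,b,c)$.

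Next I would construct a homomorphism $E_d(a,b,c)\to M_d(a,b,c)/N_d(a,b,c)$, or the other direction, using the universal property. The cleaner route is to check that the vector $v_0$ in $E_d(a,b,c)$ satisfies the hypotheses of Proposition~\ref{prop:universal_prop}: from the matrix forms in Proposition~\ref{prop:Ed}(i) we have $Yv_0=\theta_0^*v_0$ (note $\theta_0^*$ in Proposition~\ref{prop:Ed} agrees with (\ref{vtheta}) once $\delta=d$), while $(X-\theta_0)v_0=v_1$ and $(Y-\theta_1^*)v_1=\varphi_1 v_0$, giving $(Y-\theta_1^*)(X-\theta_0)v_0=\varphi_1 v_0$; and by Proposition~\ref{prop:Ed}(ii) the central elements $\kappa,\lambda,\mu$ act on $v_0$ as $\omega,\omega^*,\omega^\diamond$ respectively (these scalars match (\ref{omega})--(\ref{omaged}) at $\delta=d$). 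Hence there is a unique $\BI$-module homomorphism $\Theta:M_d(a,b,c)\to E_d(a,b,c)$ sending $m_0$ to $v_0$. By Lemma~\ref{lem:mi}, $\Theta$ sends $m_i=\prod_{h=0}^{i-1}(X-\theta_h)m_0$ to $\prod_{h=0}^{i-1}(X-\theta_h)v_0=v_i$ for $0\le i\le d$, so $\Theta$ is surjective. Since $\Theta(m_i)=v_i$ for $0\le i\le d$ and $\{v_i\}_{i=0}^d$ is a basis, $\Theta$ is surjective with kernel of dimension exactly the codimension of $\mathrm{span}\{m_0,\dots,m_d\}$; more precisely I would argue that $m_{d+1}\mapsto \prod_{h=0}^{d}(X-\theta_h)v_0$, and since the matrix of $X$ on $E_d(a,b,c)$ is lower-triangular of size $d+1$ with eigenvalues exactly $\theta_0,\dots,\theta_d$ (each once), the Cayley--Hamilton-type product $\prod_{h=0}^d(X-\theta_h)$ annihilates $E_d(a,b,c)$, so $\Theta(m_{d+1})=0$. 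Then $N_d(a,b,c)$, being the $X$-cyclic subspace generated by $m_{d+1}$, lies in $\ker\Theta$.

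Therefore $\Theta$ factors through a surjective $\BI$-module homomorphism $\overline{\Theta}:M_d(a,b,c)/N_d(a,b,c)\to E_d(a,b,c)$ sending $m_i+N_d(a,b,c)$ to $v_i$ for all $0\le i\le d$. Since both spaces have dimension $d+1$ and $\overline{\Theta}$ is surjective, it is an isomorphism. Uniqueness follows because $M_d(a,b,c)/N_d(a,b,c)$ is generated as a $\BI$-module by $m_0+N_d(a,b,c)$ (indeed by Lemma~\ref{lem:mi} the image of $m_0$ under the $X$-action spans $\{m_i+N\}_{i=0}^d$), so the value on $m_0+N_d(a,b,c)$ determines the map.

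The main obstacle is the verification that $N_d(a,b,c)\subseteq\ker\Theta$, i.e.\ that $\prod_{h=0}^d(X-\theta_h)$ acts as zero on $E_d(a,b,c)$: this needs the observation that $X$ on $E_d(a,b,c)$ is represented by a lower-triangular matrix with diagonal entries $\theta_0,\dots,\theta_d$, so its characteristic polynomial is $\prod_{h=0}^d(x-\theta_h)$, and Cayley--Hamilton gives the claim; equivalently one checks directly that $\Theta(m_{d+1})=\prod_{h=0}^{d}(X-\theta_h)v_0=0$ and that $X$-translates of this vanish. Everything else is a short bookkeeping check that the scalars $\theta_0^*,\varphi_1,\omega,\omega^*,\omega^\diamond$ appearing in Proposition~\ref{prop:universal_prop} coincide with those built into $E_d(a,b,c)$ when $\delta=d$, plus the dimension count.
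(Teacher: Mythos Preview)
Your argument is correct, but the paper takes a shorter and more elementary route. Rather than invoking the universal property of $M_d(a,b,c)$ and a Cayley--Hamilton computation to show $N_d(a,b,c)\subseteq\ker\Theta$, the paper simply observes that the quotient $M_d(a,b,c)/N_d(a,b,c)$ has $\F$-basis $\{m_i+N_d(a,b,c)\}_{i=0}^d$, and that with respect to this basis the matrices of $X$ and $Y$ (read off from Proposition~\ref{prop:Verma}(i)) are literally the same as the matrices of $X$ and $Y$ on $E_d(a,b,c)$ with respect to $\{v_i\}_{i=0}^d$ (Proposition~\ref{prop:Ed}(i)); likewise $\kappa$ acts by the same scalar $\omega$ on both. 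Since $X,Y,\kappa$ generate $\BI$ (Lemma~\ref{lem:XYkappa}), the linear map $m_i+N_d(a,b,c)\mapsto v_i$ is automatically a $\BI$-module isomorphism, and uniqueness is immediate.

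Your approach has the virtue of rehearsing exactly the mechanism behind Proposition~\ref{prop:E}: build a map out of $M_d(a,b,c)$ via the universal property, check that $m_{d+1}$ dies, and pass to the quotient. So you are essentially proving a special case of Proposition~\ref{prop:E} along the way. The paper's approach buys brevity here because both sides are already presented by identical explicit matrices, so no universal property or kernel computation is needed.
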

\begin{proof}
By Lemma \ref{lem:N} the quotient space $M_d(a,b,c)/N_d(a,b,c)$ is a $(d+1)$-dimensional $\BI$-module with the $\F$-basis 
\begin{gather}\label{basis:M/N}
\{m_i+N_d(a,b,c)\}_{i=0}^d.
\end{gather}
Comparing Propositions \ref{prop:Ed}(i) and \ref{prop:Verma}(i) the matrices representing $X$ and $Y$ with respect to the $\F$-basis $\{v_i\}_{i=0}^d$ for $E_d(a,b,c)$ are identical with the matrices representing $X$ and $Y$ with respect the $\F$-basis (\ref{basis:M/N}) for $M_d(a,b,c)/N_d(a,b,c)$, respectively.  By Propositions \ref{prop:Ed}(ii) and \ref{prop:Verma}(ii) the actions of $\kappa$ on $E_d(a,b,c)$ and $M_d(a,b,c)/N_d(a,b,c)$ are scalar multiplication by the same scalar $\omega$.
Therefore the lemma follows by Lemma \ref{lem:XYkappa}.
\end{proof}

\begin{prop}\label{prop:E}
If there is a $\BI$-module homomorphism $M_d(a,b,c)\to V$ that sends $m_0$ to $v$ and 
\begin{gather}\label{md+1_ker}
\prod_{i=0}^d (X-\theta_i) v=0,
\end{gather}
then there exists a $\BI$-module homomorphism $E_d(a,b,c)\to V$ that sends $v_0$ to $v$.
\end{prop}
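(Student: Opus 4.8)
The plan is to observe that the hypothesized homomorphism annihilates the submodule $N_d(a,b,c)$, so that it factors through the quotient $M_d(a,b,c)/N_d(a,b,c)$, and then to transport the induced map along the isomorphism $M_d(a,b,c)/N_d(a,b,c)\cong E_d(a,b,c)$ of Lemma~\ref{lem:E}, which is set up precisely so that the distinguished generators $m_0+N_d(a,b,c)$ and $v_0$ correspond.

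Concretely, write $f\colon M_d(a,b,c)\to V$ for the given $\BI$-module homomorphism, so $f(m_0)=v$. Applying Lemma~\ref{lem:mi} with $i=0$ and $j=d$ gives $m_{d+1}=\prod_{h=0}^{d}(X-\theta_h)m_0$; since $f$ is $\BI$-linear, $f(m_{d+1})=\prod_{i=0}^{d}(X-\theta_i)v$, which vanishes by (\ref{md+1_ker}). Now apply Lemma~\ref{lem:mi} again, this time with $i=d+1$: for every $j\geq d+1$ the vector $m_{j+1}$ is a polynomial in $X$ applied to $m_{d+1}$, so $f(m_{j+1})=0$ as well. By Lemma~\ref{lem:N} the vectors $\{m_i\}_{i=d+1}^{\infty}$ form an $\F$-basis for $N_d(a,b,c)$, and hence $N_d(a,b,c)\subseteq\ker f$.

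Since $N_d(a,b,c)$ is a $\BI$-submodule (Lemma~\ref{lem:N}) contained in $\ker f$, the map $f$ induces a $\BI$-module homomorphism $\overline{f}\colon M_d(a,b,c)/N_d(a,b,c)\to V$ with $\overline{f}(m_0+N_d(a,b,c))=v$. By Lemma~\ref{lem:E} there is a $\BI$-module isomorphism $\iota\colon M_d(a,b,c)/N_d(a,b,c)\to E_d(a,b,c)$ with $\iota(m_i+N_d(a,b,c))=v_i$ for $0\leq i\leq d$; in particular $\iota$ sends $m_0+N_d(a,b,c)$ to $v_0$. Then $\overline{f}\circ\iota^{-1}\colon E_d(a,b,c)\to V$ is the desired $\BI$-module homomorphism sending $v_0$ to $v$.

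I expect no serious obstacle here. The only step requiring an actual argument is the inclusion $N_d(a,b,c)\subseteq\ker f$, and this is immediate once one recalls that $N_d(a,b,c)$ is $X$-cyclically generated by $m_{d+1}$ (Lemma~\ref{lem:N}) and that $m_{d+1}\in\ker f$ by Lemma~\ref{lem:mi} together with the hypothesis (\ref{md+1_ker}); everything else is the first isomorphism theorem for modules combined with Lemma~\ref{lem:E}.
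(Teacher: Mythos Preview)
Your proof is correct and follows essentially the same route as the paper: show $m_{d+1}\in\ker f$ via Lemma~\ref{lem:mi}, deduce $N_d(a,b,c)\subseteq\ker f$, factor through the quotient, and compose with the isomorphism of Lemma~\ref{lem:E}. The only cosmetic difference is that you verify $N_d(a,b,c)\subseteq\ker f$ by checking each basis vector $m_i$ ($i\geq d+1$) individually, whereas the paper appeals directly to the fact that $N_d(a,b,c)$ is $X$-cyclically generated by $m_{d+1}$.
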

\begin{proof}
Denote by $\varrho$ the $\BI$-module homomorphism $M_d(a,b,c)\to V$. It follows from Lemma \ref{lem:mi} that 
$$
m_{d+1}=\prod_{i=0}^d (X-\theta_i) m_0
$$
Combined with (\ref{md+1_ker}) this yields that $m_{d+1}$ lies in the kernel of $\varrho$. Hence $N_d(a,b,c)$ is contained in the kernel of $\varrho$. By Lemma \ref{lem:N} the homomorphism $\varrho$ induces a $\BI$-module homomorphism $M_d(a,b,c)/N_d(a,b,c)\to V$ that sends $m_0+N_d(a,b,c)$ to $v$. Combined with Lemma \ref{lem:E} the proposition follows. 
\end{proof}

\section{The conditions for the $\BI$-module $E_d(a,b,c)$ as irreducible}\label{s:irr}

The goal of this section is to establish the necessary and sufficient conditions for $E_d(a,b,c)$ to be irreducible in terms of the parameters $a,b,c,d$. In this section we set 
\begin{gather}\label{e:wi}
w_i=\prod_{h=0}^{i-1} (X-\theta_{d-h}) v_0 
\qquad 
(0\leq i\leq d).
\end{gather}

\begin{lem}\label{lem:irr1}
If the $\BI$-module $E_d(a,b,c)$ is irreducible then the following conditions hold:
\begin{enumerate}
\item ${\rm char\, }\F\centernot\mid i$ for all $i=2,4,\ldots,d-1$.

\item $a+b+c, a+b-c\not\in \left\{
\displaystyle{\frac{d-1}{2}-i}\,\bigg|\,i=0,2,\ldots,d-1
\right\}$.
\end{enumerate}
\end{lem}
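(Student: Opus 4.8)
The plan is to prove the contrapositive: if either (i) or (ii) fails, then $E_d(a,b,c)$ admits a proper nonzero $\BI$-submodule. The organizing observation is that the negations of (i) and (ii) both reduce to a single assertion, namely that $\varphi_{i_0}=0$ for some integer $i_0$ with $1\le i_0\le d$, where $\varphi_1,\dots,\varphi_d$ are the scalars attached to $E_d(a,b,c)$ in Proposition~\ref{prop:Ed}(i). So the heart of the matter is the implication: \emph{if $\varphi_{i_0}=0$ for some $1\le i_0\le d$, then $E_d(a,b,c)$ is reducible.}

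To prove this implication I would work in the $\F$-basis $\{v_i\}_{i=0}^d$ of Proposition~\ref{prop:Ed}(i) and let $W$ be the $\F$-subspace spanned by $v_{i_0},v_{i_0+1},\dots,v_d$. From the displayed matrix of $X$ one reads off $(X-\theta_i)v_i=v_{i+1}$ for $0\le i<d$ and $Xv_d=\theta_d v_d$, so $W$ is $X$-invariant. From the displayed matrix of $Y$ one reads off $(Y-\theta^*_i)v_i=\varphi_i v_{i-1}$ for $1\le i\le d$; since $\varphi_{i_0}v_{i_0-1}=0$ and $v_{i-1}\in W$ whenever $i>i_0$, the subspace $W$ is also $Y$-invariant. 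Because $\kappa$ acts on $E_d(a,b,c)$ as multiplication by a scalar, Lemma~\ref{lem:XYkappa} shows that $W$ is a $\BI$-submodule. It contains $v_d$, hence is nonzero, and since $i_0\ge1$ it does not contain $v_0$, hence is proper. Thus $E_d(a,b,c)$ is reducible.

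It then remains to translate the hypotheses into a vanishing $\varphi_{i_0}$, which is elementary bookkeeping with the formulas of Proposition~\ref{prop:Ed}(i). If (i) fails, then ${\rm char\,}\F$ divides some even $i_0\in\{2,4,\dots,d-1\}$, and then $\varphi_{i_0}=i_0(d-i_0+1)=0$ in $\F$, with $1\le i_0\le d$. If (ii) fails, then one of $a+b+c$, $a+b-c$ equals $\frac{d-1}{2}-j$ for some $j\in\{0,2,\dots,d-1\}$; set $i_0=j+1$, which is odd and satisfies $1\le i_0\le d$. Doubling that relation gives $2a+2b\pm2c=d+1-2i_0$, whence $(2c)^2=(2a+2b-d+2i_0-1)^2$, i.e.\ $c^2=\frac{1}{4}(2a+2b-d+2i_0-1)^2$, so $\varphi_{i_0}=0$. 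In either case we have produced the required index $i_0$, and combining with the previous paragraph the lemma follows.

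I do not anticipate a serious obstacle; the argument is a one-step submodule construction together with arithmetic. The only places that call for care are the verification of $Y$-invariance of $W$ precisely at the index $i_0$ — this is the sole point where the equality $\varphi_{i_0}=0$ is used — and the index matching $i_0=j+1$ in the last paragraph, together with the remark that as $i_0$ runs over the even integers in $[1,d]$ both factors $i_0$ and $d-i_0+1$ of $\varphi_{i_0}$ run over $\{2,4,\dots,d-1\}$. The vectors $w_i$ introduced in~(\ref{e:wi}) are not needed for this lemma; they will be used later in the section.
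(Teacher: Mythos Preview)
Your proof is correct and follows essentially the same approach as the paper: both argue that the failure of (i) or (ii) forces some $\varphi_{i_0}=0$ with $1\le i_0\le d$, whence the span of $\{v_h\}_{h=i_0}^d$ is a proper nonzero $\BI$-submodule by $X$-, $Y$-, $\kappa$-invariance and Lemma~\ref{lem:XYkappa}. Your write-up is slightly more explicit about the bookkeeping that identifies $\varphi_i\ne 0$ for all $1\le i\le d$ with conditions (i) and (ii), which the paper compresses into a single sentence.
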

\begin{proof}
Suppose that there is an integer $i$ with $1\leq i\leq d$ such that $\varphi_i=0$. By Proposition \ref{prop:Ed} the $\F$-subspace $W$ of $E_d(a,b,c)$
spanned by 
$
\{v_h\}_{h=i}^d
$
is invariant under $X,Y,\kappa$. It follows from Lemma \ref{lem:XYkappa} that $W$ is a $\BI$-submodule of $E_d(a,b,c)$, a contradiction to the irreducibility of $E_d(a,b,c)$. Therefore $\varphi_i\not=0$ for all $1\leq i\leq d$, which is equivalent to (i) and (ii) by (\ref{vphi}). The lemma follows.
\end{proof}

\begin{lem}\label{lem:iso2}
$\{w_i\}_{i=0}^d$ is an $\F$-basis for $E_d(a,b,c)$.
\end{lem}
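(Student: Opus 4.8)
The plan is to show that the vectors $w_0, w_1, \ldots, w_d$ are linearly independent in the $(d+1)$-dimensional space $E_d(a,b,c)$; since there are $d+1$ of them, this forces them to be a basis. The natural strategy is to compute $w_i$ in terms of the given basis $\{v_j\}_{j=0}^d$ and exhibit the transition matrix as triangular with nonzero diagonal. Recall from Proposition \ref{prop:Ed}(i) that $X$ acts on $\{v_j\}$ by $(X-\theta_j)v_j = v_{j+1}$ for $0 \le j \le d-1$ and $(X-\theta_d)v_d = 0$. So applying a single factor $(X-\theta_{d-h})$ shifts indices up by one up to a correction term proportional to $v_j$ itself, namely $(X-\theta_{d-h})v_j = v_{j+1} + (\theta_j - \theta_{d-h})v_j$ (with $v_{d+1}:=0$). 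Iterating this over $h = 0,1,\ldots,i-1$ starting from $v_0$, one sees by a straightforward induction that $w_i$ is a linear combination of $v_0, v_1, \ldots, v_i$ whose coefficient of $v_i$ is $1$ (the "leading" term, obtained by always taking the shift $v_j \mapsto v_{j+1}$). Hence the matrix expressing $(w_0,\ldots,w_d)$ in terms of $(v_0,\ldots,v_d)$ is lower triangular with all diagonal entries equal to $1$, so it is invertible.

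First I would record the one-step formula $(X-\theta_{d-h})v_j = v_{j+1} + (\theta_j-\theta_{d-h})v_j$, valid for $0 \le j \le d$ with the convention $v_{d+1}=0$ (note when $j=d$ the first term vanishes and when $j = d-h$ the second term also vanishes, which is harmless). Then I would prove by induction on $i$ that
$$
w_i = v_i + \sum_{j=0}^{i-1} c_{i,j} v_j
$$
for suitable scalars $c_{i,j} \in \F$: the base case $i=0$ is $w_0 = v_0$ by definition \eqref{e:wi}, and the inductive step applies $(X-\theta_{d-i})$ to the expression for $w_i$ using the one-step formula, which sends $v_i \mapsto v_{i+1}$ plus lower terms and sends each $v_j$ ($j<i$) to a combination of $v_0,\ldots,v_{j+1} \subseteq \operatorname{span}\{v_0,\ldots,v_i\}$, yielding $w_{i+1} = v_{i+1} + (\text{combination of } v_0,\ldots,v_i)$.

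From the triangular form it follows immediately that $\{w_i\}_{i=0}^d$ spans $E_d(a,b,c)$ (the span contains $v_d = w_d$, then $v_{d-1}$, and so on down to $v_0$, by back-substitution), and being $d+1$ vectors spanning a $(d+1)$-dimensional space they are a basis. I do not expect any genuine obstacle here; the only point requiring a little care is bookkeeping the convention $v_{d+1}=0$ and checking that the leading coefficient really stays $1$ through all $i$ steps (it does, since each factor contributes exactly one index-raising term with coefficient $1$). This lemma is purely linear-algebraic and uses only the explicit action of $X$ from Proposition \ref{prop:Ed}(i); the parameters $a,b,c,d$ and the scalars $\varphi_i$ play no role, which is why the statement holds unconditionally.
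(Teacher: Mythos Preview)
Your proof is correct and follows essentially the same approach as the paper. The paper's proof is simply the one-line observation that $v_i=\prod_{h=0}^{i-1}(X-\theta_h)v_0$, so both $v_i$ and $w_i$ are monic degree-$i$ polynomials in $X$ applied to $v_0$; your argument spells out this unitriangular transition explicitly via the one-step formula and induction, but the underlying idea is identical.
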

\begin{proof}
 It follows from Proposition \ref{prop:Ed}(i) that 
$$
v_i=\prod_{h=0}^{i-1} (X-\theta_{h}) v_0
\qquad 
(0\leq i\leq d). 
$$
Comparing with (\ref{e:wi}) the lemma follows.
\end{proof}

\begin{prop}\label{prop:iso2}
The $\BI$-module $E_d(a,b,c)$ is isomorphic to $E_d(-a,b,c)$. 
Moreover the matrices representing $X$ and $Y$ with respect to the $\F$-basis $\{w_i\}_{i=0}^d$ for  $E_d(a,b,c)$ are 
\begin{gather}\label{XY_Ed(-a,b,c)}
\begin{pmatrix}
\theta_d & & &  &{\bf 0}
\\ 
1 &\theta_{d-1} 
\\
&1 &\theta_{d-2}
 \\
& &\ddots &\ddots
 \\
{\bf 0} & & &1 &\theta_0
\end{pmatrix},
\qquad 
\begin{pmatrix}
\theta_0^* &\phi_1 &  & &{\bf 0}
\\ 
 &\theta_1^* &\phi_2
\\
 &  &\theta_2^* &\ddots
 \\
 & & &\ddots &\phi_d
 \\
{\bf 0}  & & & &\theta_d^*
\end{pmatrix}
\end{gather}
respectively.
\end{prop}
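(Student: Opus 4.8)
The plan is to realize the desired isomorphism through the universal property of $M_d(a,b,c)$, in the form of Proposition \ref{prop:E}, applied with the parameter $a$ replaced by $-a$ and with the vector $v_0$ of $E_d(a,b,c)$ in the role of the image vector. To prepare, I will record how the data in Proposition \ref{prop:Ed} transform under $a\mapsto -a$. Because $d$ is odd one has $\frac{(-1)^i(-2a-d+2i)}{2}=\theta_{d-i}$ for $0\le i\le d$, so the diagonal entries of the matrix of $X$ for $E_d(-a,b,c)$ are $\theta_d,\theta_{d-1},\dots,\theta_0$; the diagonal entries of the matrix of $Y$ are the unchanged $\theta_0^*,\dots,\theta_d^*$; and the off-diagonal entries of the matrix of $Y$, which for odd $i$ equal $c^2-\frac{(-2a+2b-d+2i-1)^2}{4}=\phi_i$ by (\ref{phi}) and for even $i$ are unchanged, are precisely $\phi_1,\dots,\phi_d$. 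Moreover $\kappa,\lambda,\mu$ act on $E_d(-a,b,c)$ by the same scalars $\omega,\omega^*,\omega^\diamond$ (each even in $a$) as on $E_d(a,b,c)$. These are all one-line verifications. Let $\{v_i^\circ\}_{i=0}^d$ denote the $\F$-basis for $E_d(-a,b,c)$ supplied by Proposition \ref{prop:Ed}(i).

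Next I will verify that $v_0\in E_d(a,b,c)$ satisfies the hypotheses needed to apply Proposition \ref{prop:universal_prop}, and then Proposition \ref{prop:E}, with parameters $(-a,b,c)$. From Proposition \ref{prop:Ed} one reads off immediately $Yv_0=\theta_0^*v_0$, $\kappa v_0=\omega v_0$, $\lambda v_0=\omega^*v_0$, $\mu v_0=\omega^\diamond v_0$; and since $v_d=\prod_{i=0}^{d-1}(X-\theta_i)v_0$ while the last column of the matrix of $X$ in Proposition \ref{prop:Ed}(i) gives $Xv_d=\theta_d v_d$, one obtains $\prod_{i=0}^{d}(X-\theta_i)v_0=0$, which is precisely condition (\ref{md+1_ker}) for the parameter $-a$ because $a\mapsto -a$ merely permutes $\theta_0,\dots,\theta_d$. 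The one substantive point is the relation $(Y-\theta_1^*)(X-\theta_d)v_0=\phi_1 v_0$ required by Proposition \ref{prop:universal_prop} at $(-a,b,c)$: using $Xv_0=\theta_0 v_0+v_1$, $Yv_0=\theta_0^*v_0$, $Yv_1=\varphi_1 v_0+\theta_1^*v_1$ from Proposition \ref{prop:Ed}(i), it reduces to the scalar identity $(\theta_0-\theta_d)(\theta_0^*-\theta_1^*)+\varphi_1=\phi_1$, which holds since $\theta_0-\theta_d=2a$, $\theta_0^*-\theta_1^*=2b-d+1$, and $\varphi_1-\phi_1=-2a(2b-d+1)$ by (\ref{vphi}) and (\ref{phi}). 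Reconciling $\varphi_1$ (built from $2a+2b$) with $\phi_1$ (built from $2b-2a$) is the only place a genuine, if short, computation enters, and I expect it to be the main obstacle.

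With these checks in place, Proposition \ref{prop:universal_prop} at $(-a,b,c)$ yields a $\BI$-module homomorphism $M_d(-a,b,c)\to E_d(a,b,c)$ sending the generator to $v_0$, and Proposition \ref{prop:E} at $(-a,b,c)$ then yields a $\BI$-module homomorphism $g:E_d(-a,b,c)\to E_d(a,b,c)$ sending $v_0^\circ$ to $v_0$. Since $v_i^\circ=\prod_{h=0}^{i-1}(X-\theta_{d-h})v_0^\circ$ for $0\le i\le d$ (Proposition \ref{prop:Ed}(i) applied with $-a$, whose $X$-matrix has diagonal entries $\theta_{d-h}$) and $g$ is $\BI$-linear, $g$ sends $v_i^\circ$ to $\prod_{h=0}^{i-1}(X-\theta_{d-h})v_0=w_i$. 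By Lemma \ref{lem:iso2} the vectors $\{w_i\}_{i=0}^d$ form an $\F$-basis of $E_d(a,b,c)$, so $g$ is an isomorphism, which establishes the first assertion. Finally, as $g$ carries the basis $\{v_i^\circ\}_{i=0}^d$ onto $\{w_i\}_{i=0}^d$, the matrices of $X$ and $Y$ with respect to $\{w_i\}_{i=0}^d$ equal those of $X$ and $Y$ with respect to $\{v_i^\circ\}_{i=0}^d$; by Proposition \ref{prop:Ed}(i) applied with $-a$ together with the transformation rules recorded in the first paragraph, these are exactly the matrices displayed in (\ref{XY_Ed(-a,b,c)}).
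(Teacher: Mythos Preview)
Your proof is correct and follows essentially the same route as the paper: both use the universal property (Proposition~\ref{prop:universal_prop}) together with Proposition~\ref{prop:E} to produce the desired $\BI$-module homomorphism, and both invoke Lemma~\ref{lem:iso2} to conclude it is an isomorphism. The only cosmetic difference is the direction of the map: the paper builds $E_d(a,b,c)\to E_d(-a,b,c)$ by applying Proposition~\ref{prop:E} at the parameters $(a,b,c)$ with target vector $u_0=v_0^\circ\in E_d(-a,b,c)$, whereas you build $E_d(-a,b,c)\to E_d(a,b,c)$ by applying it at $(-a,b,c)$ with target vector $v_0\in E_d(a,b,c)$; the key scalar identity you verify, $(\theta_0-\theta_d)(\theta_0^*-\theta_1^*)+\varphi_1=\phi_1$, is exactly the $a\leftrightarrow -a$ mirror of the paper's check $(Y-\theta_1^*)(X-\theta_0)u_0=\varphi_1 u_0$.
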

\begin{proof}
By Proposition \ref{prop:Ed}(i) there exists an $\F$-basis $\{u_i\}_{i=0}^d$ for $E_d(-a,b,c)$ with respect to which the matrices representing $X$ and $Y$ are equal to the matrices (\ref{XY_Ed(-a,b,c)}). By Lemma \ref{lem:iso2} it suffices to show that there is a $\BI$-module homomorphism $E_d(a,b,c)\to E_d(-a,b,c)$ that sends $w_i$ to $u_i$ for all $0\leq i\leq d$.

Observe that $Y u_0=\theta_0^* u_0$ and a direct calculation yields that 
\begin{gather*}
(Y-\theta_1^*)(X-\theta_0) u_0
=\varphi_1 u_0.
\end{gather*}
By Proposition \ref{prop:Ed}(ii) the elements $\kappa,\lambda,\mu$ act on $E_d(-a,b,c)$ as scalar multiplication by $\omega,\omega^*,\omega^\diamond$ respectively. According to Proposition \ref{prop:universal_prop} there exists a unique $\BI$-module homomorphism $M_d(a,b,c)\to E_d(-a,b,c)$ that sends $m_0$ to $u_0$. 
Using (\ref{XY_Ed(-a,b,c)}) yields that
\begin{gather*}
\prod_{i=0}^d (X-\theta_i)u_0=0.
\end{gather*}
Hence there exists a $\BI$-module homomorphism 
\begin{gather}\label{E(a)->E(-a)}
E_d(a,b,c)\to E_d(-a,b,c)
\end{gather}
that maps $v_0$ to $u_0$ by Proposition \ref{prop:E}. Using (\ref{e:wi}) yields that (\ref{E(a)->E(-a)}) sends $w_i$ to $u_i$ for all $0\leq i\leq d$. The proposition follows.
\end{proof}

\begin{lem}\label{lem:irr2}
If the $\BI$-module $E_d(a,b,c)$ is irreducible then the following conditions hold:
\begin{enumerate}
\item ${\rm char\, } \F\centernot\mid i$ for all $i=2,4,\ldots,d-1$.

\item $a+b+c, -a+b+c, a-b+c, a+b-c\not\in \left\{
\displaystyle{\frac{d-1}{2}-i}\,\bigg|\,i=0,2,\ldots,d-1
\right\}$.
\end{enumerate}
\end{lem}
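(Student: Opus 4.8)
The plan is to reduce the symmetric statement of Lemma~\ref{lem:irr2} to the asymmetric one already obtained in Lemma~\ref{lem:irr1} by invoking the automorphisms of $\BI$ recorded in Table~\ref{pm1-action} together with the isomorphism $E_d(a,b,c)\cong E_d(-a,b,c)$ of Proposition~\ref{prop:iso2}. The key observation is that irreducibility is preserved both under twisting by a $\{\pm1\}^2$-automorphism and under the sign change $a\mapsto -a$, so from a single irreducible $E_d(a,b,c)$ we obtain a whole orbit of irreducible modules, and applying Lemma~\ref{lem:irr1} to each member of the orbit yields the four non-vanishing conditions in part~(ii).

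Concretely, I would first note that part~(i) is literally part~(i) of Lemma~\ref{lem:irr1}, so only part~(ii) needs attention. Assume $E_d(a,b,c)$ is irreducible. By Lemma~\ref{lem:irr1}(ii) applied directly we get
\begin{gather*}
a+b+c,\ a+b-c\not\in\left\{\tfrac{d-1}{2}-i\ \big|\ i=0,2,\ldots,d-1\right\}.
\end{gather*}
Next, by Proposition~\ref{prop:iso2} the module $E_d(-a,b,c)$ is isomorphic to $E_d(a,b,c)$, hence also irreducible; applying Lemma~\ref{lem:irr1}(ii) to $E_d(-a,b,c)$ replaces $a$ by $-a$ and gives
\begin{gather*}
-a+b+c,\ -a+b-c\not\in\left\{\tfrac{d-1}{2}-i\ \big|\ i=0,2,\ldots,d-1\right\}.
\end{gather*}
Together these four conditions are exactly the four exclusions $a+b+c,\ -a+b+c,\ a-b+c,\ a+b-c\notin\{\frac{d-1}{2}-i\}$, once one checks that $\{a+b-c,-a+b-c\}=\{a+b-c,\ a-b+c\}$ up to the sign symmetry — more precisely, $-a+b-c=-(a-b+c)$, and since the exclusion set $\{\frac{d-1}{2}-i\mid i=0,2,\ldots,d-1\}$ is stable under negation (its elements are $\tfrac{d-1}{2},\tfrac{d-5}{2},\ldots,-\tfrac{d-1}{2}$, symmetric about $0$ because $d$ is odd), the condition ``$-a+b-c$ avoids the set'' is equivalent to ``$a-b+c$ avoids the set''. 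This symmetry of the exclusion set under $x\mapsto -x$ is the one small fact that must be verified explicitly.

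I expect no real obstacle here: the proof is essentially bookkeeping, combining Lemma~\ref{lem:irr1} with the isomorphism $E_d(a,b,c)\cong E_d(-a,b,c)$. The only mild subtlety — and the point I would be careful about — is the reindexing in the last step: one must confirm that negating a parameter permutes the four linear forms $a+b+c,-a+b+c,a-b+c,a+b-c$ among themselves up to an overall sign, and that the target set is invariant under that sign, so that the two pairs of conditions coming from $E_d(a,b,c)$ and $E_d(-a,b,c)$ really do combine to give all four listed conditions rather than just three of them. If one wished to avoid relying on that symmetry of the exclusion set, an alternative is to also invoke the twists $E_d(a,b,c)^{(\e,\e')}$ (whose irreducibility is immediate) and the analogue of Proposition~\ref{prop:iso2} for $b$ and $c$, but the cleaner route is the one sketched above using only Proposition~\ref{prop:iso2} and the built-in negation symmetry of $\{\frac{d-1}{2}-i\mid i=0,2,\ldots,d-1\}$.
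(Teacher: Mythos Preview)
Your approach is essentially the paper's: apply Lemma~\ref{lem:irr1} to both $E_d(a,b,c)$ and $E_d(-a,b,c)$, the latter being irreducible by Proposition~\ref{prop:iso2}. The paper states this in two sentences and leaves implicit the very point you flag --- that applying Lemma~\ref{lem:irr1}(ii) to $E_d(-a,b,c)$ yields $-a+b-c\notin S$ rather than $a-b+c\notin S$, so one must use that the exclusion set $S=\{\tfrac{d-1}{2}-i\mid i=0,2,\ldots,d-1\}$ is closed under negation; your explicit verification of this (via $\tfrac{d-1}{2}-i\mapsto \tfrac{d-1}{2}-(d-1-i)$ with $d-1-i$ even) fills a small gap the paper glosses over.
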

\begin{proof}
By Proposition \ref{prop:iso2} the $\BI$-modules $E_d(a,b,c)$ and $E_d(-a,b,c)$ are isomorphic. Hence the lemma follows by applying Lemma \ref{lem:irr1} to $E_d(a,b,c)$ and $E_d(-a,b,c)$.
\end{proof}

Consider the operators 
\begin{align*}
R&=\prod_{h=1}^{d} (Y-\theta^*_h),
\\
S_i&=\prod_{h=1}^{d-i}(X-\theta_{d-h+1})
\qquad (0\leq i\leq d).
\end{align*}
It follows from Proposition \ref{prop:Ed}(i) that $Rv$ is a scalar multiple of $v_0$ for all $v\in E_d(a,b,c)$. Thus, for any integers $i,j$ with $0\leq i,j\leq d$ there exists a unique $L_{ij}\in \F$ such that 
\begin{gather}\label{defn:Lij}
RS_i v_j=L_{ij} v_0.
\end{gather}
Using Proposition \ref{prop:Ed}(i) yields that 
\begin{align}
L_{ij}&=0
\qquad 
(0\leq i<j\leq d),
\label{lowertriangular}
\\
L_{ij}&=(\theta_{i}-\theta_{j-1}) L_{i,j-1}+L_{i-1,j-1}
\qquad 
(1\leq j\leq i \leq d).
\label{L:rr}
\end{align}
It follows from Proposition \ref{prop:iso2} that 
\begin{gather}\label{Li0}
L_{i0}=\prod_{h=1}^{i} (\theta_0^*-\theta_{d-h+1}^*) \prod_{h=1}^{d-i} \phi_h 
\qquad 
(0\leq i\leq d).
\end{gather}
Solving the recurrence relation (\ref{L:rr}) with the initial condition (\ref{Li0}) yields that 
\begin{gather}\label{Lij}
L_{ij}=
\left\{
\begin{array}{ll}
\displaystyle{\prod_{h=1}^{i-j} (\theta_0^*-\theta_{d-h+1}^*)
\prod_{h=1}^{d-i} \phi_h 
\prod_{h=1}^{\lceil\frac{j}{2}\rceil}
\varphi_{2h-1}
\prod_{h=1}^{\lfloor\frac{j}{2}\rfloor}
\varphi_{2(\lfloor\frac{i}{2}\rfloor-h+1)}
}
\quad 
&\hbox{if $i$ is odd or $j$ is even},
\\
0 \quad 
&\hbox{otherwise}
\end{array}
\right.
\end{gather}
for all $0\leq j\leq i\leq d$.

\begin{thm}\label{thm:irrE}
The $\BI$-module $E_d(a,b,c)$ is irreducible if and only if the following conditions hold:
\begin{enumerate}
\item ${\rm char\, } \F\centernot\mid i$ for all $i=2,4,\ldots,d-1$.

\item $a+b+c, -a+b+c, a-b+c, a+b-c\not\in \left\{
\displaystyle{\frac{d-1}{2}-i}\,\bigg|\,i=0,2,\ldots,d-1
\right\}$.
\end{enumerate}
\end{thm}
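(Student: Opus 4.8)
The plan is to prove both directions separately. The forward direction (irreducibility $\Rightarrow$ (i),(ii)) is already essentially done: it is precisely the content of Lemma \ref{lem:irr2}. So the real work is the converse: assuming conditions (i) and (ii), I must show $E_d(a,b,c)$ has no proper nonzero $\BI$-submodule. Let $W$ be a nonzero $\BI$-submodule; the goal is $W=E_d(a,b,c)$.

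The strategy is to exploit the two compatible ``Jordan-like'' bases for $E_d(a,b,c)$, namely $\{v_i\}_{i=0}^d$ from Proposition \ref{prop:Ed}(i) and $\{w_i\}_{i=0}^d$ from \eqref{e:wi}, together with the operators $R$ and $S_i$. First I would observe that since $W\neq 0$ and $X$ acts in lower-triangular form on $\{v_i\}$, there is a largest index $j$ with $v_j$ appearing (with nonzero coefficient) in some element of $W$; concretely one can take $0\leq j\leq d$ minimal such that $W$ contains a vector of the form $v_j + (\text{higher-index terms})$ — actually it is cleaner to pick a nonzero $v\in W$ and let $j$ be the smallest index with nonzero $v_j$-component. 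Applying $S_i$ (which is a product of factors $X-\theta_h$) and then $R$ sends $v$ to $L_{ij}v_0$ plus contributions from the higher components; the point of the operators is that $R$ annihilates everything except the bottom, so $RS_i v = L_{ij}v_0\cdot(\text{coeff})$. Hence if I can choose $i$ with $L_{ij}\neq 0$, I conclude $v_0\in W$, and since $E_d(a,b,c)$ is generated by $v_0$ (it is $X$-cyclic on $v_0$ by Proposition \ref{prop:Ed}(i)), this forces $W=E_d(a,b,c)$.

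So the crux reduces to: for every $j$ with $0\leq j\leq d$, there exists $i$ with $j\leq i\leq d$ and $L_{ij}\neq 0$. Here the explicit formula \eqref{Lij} is decisive. Take $i=j$ if $j$ is even and $i=d$ (or the nearest admissible index) if $j$ is odd — in the case $i=j$ with $j$ even, $L_{jj}$ is a product of $\varphi_{2h-1}$'s, $\varphi_{2(\cdots)}$'s, and $\phi_h$'s, and one checks via conditions (i),(ii) and Lemma \ref{lem:irr2}(i),(ii) (which give $\varphi_h\neq 0$ for all $1\leq h\leq d$ and, after the $a\mapsto -a$ symmetry of Proposition \ref{prop:iso2}, also $\phi_h\neq 0$) that none of these factors vanish; the $\theta_0^*-\theta_{d-h+1}^*$ factors are nonzero because $\theta^*_h$ as defined in \eqref{vtheta} alternates sign with $h$, so $\theta^*_0$ differs from $\theta^*_{d-h+1}$ for $h$ in the relevant range (using $d$ odd and ${\rm char\,}\F=0$). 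For odd $j$ one similarly selects an odd $i\geq j$ and reads off nonvanishing from \eqref{Lij}. The main obstacle I anticipate is the bookkeeping in matching which index $i$ to use for a given parity of $j$ so that the product in \eqref{Lij} lands on factors that conditions (i),(ii) actually control — in particular verifying that the $\theta^*$-difference factors and the $\phi_h,\varphi_h$ factors are simultaneously nonzero, which is exactly where both parts of Lemma \ref{lem:irr2} (equivalently, the hypotheses on $a+b+c,-a+b+c,a-b+c,a+b-c$ and on ${\rm char\,}\F$) get used. Once that case analysis is organized, the argument closes immediately.

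I would present it as: ``($\Rightarrow$) is Lemma \ref{lem:irr2}. ($\Leftarrow$) Assume (i),(ii). By Lemma \ref{lem:irr2}(i),(ii) applied in reverse — that is, by \eqref{vphi} and \eqref{phi} — we have $\varphi_i\neq 0$ and $\phi_i\neq 0$ for all $1\leq i\leq d$. Let $W$ be a nonzero submodule, pick $0\neq v\in W$, let $j$ be minimal with the $v_j$-coefficient of $v$ nonzero, choose $i$ as above, and compute $RS_i v = (\text{nonzero scalar})\, v_0$ using \eqref{defn:Lij}, \eqref{lowertriangular}, \eqref{Lij}; hence $v_0\in W$ and $W=E_d(a,b,c)$ since $E_d(a,b,c)$ is $X$-cyclic on $v_0$.''
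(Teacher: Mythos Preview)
Your proposal is correct and follows essentially the same route as the paper. The paper packages the endgame a bit more cleanly: it observes that the matrix $L=(L_{ij})_{0\le i,j\le d}$ is lower triangular by \eqref{lowertriangular} with diagonal entries $L_{ii}=\prod_{h=1}^{d-i}\phi_h\prod_{h=1}^{i}\varphi_h\neq 0$ under (i),(ii), hence nonsingular; so for any nonzero coefficient vector $(a_j)$ of $w=\sum a_j v_j\in W$ there is some $i$ with $\sum_j L_{ij}a_j\neq 0$, giving $v_0\in W$. Your ``take $j$ minimal with $a_j\neq 0$ and set $i=j$'' is exactly the hands-on version of this triangularity argument.

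One simplification: your parity case-split is unnecessary. In \eqref{Lij} the side condition ``$i$ odd or $j$ even'' is automatically met when $i=j$, so $L_{jj}\neq 0$ for every $0\le j\le d$ and you may always take $i=j$. In particular the factors $\theta_0^*-\theta_{d-h+1}^*$ never enter (the product over $1\le h\le i-j$ is empty), which is just as well since for odd $h$ that difference equals $2b-h+1$ and can vanish for particular $b$ not excluded by (i),(ii).
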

\begin{proof} 
$(\Rightarrow)$: Immediate from Lemma \ref{lem:irr2}.

$(\Leftarrow)$: 
Let $W$ denote any nonzero $\BI$-submodule of $E_d(a,b,c)$.
It suffices to show that $W=E_d(a,b,c)$. Pick a nonzero vector $w\in W$. Since $W$ is invariant under $X$ and $Y$ it follows that 
\begin{gather}\label{W}
R S_i w\in W
\qquad 
(0\leq i\leq d).
\end{gather}
Since $\{v_i\}_{i=0}^d$ is an $\F$-basis for $E_d(a,b,c)$, there are $a_i\in \F$ for $0\leq i\leq d$ such that 
$$
w=\sum_{i=0}^d a_i v_i.
$$ 
Using (\ref{defn:Lij}) yields that 
\begin{gather}\label{RSw}
RS_i w=\left(\sum_{j=0}^d L_{ij} a_j\right) v_0
\qquad (0\leq i\leq d).
\end{gather}

Recall the parameters $\{\phi_i\}_{i\in \Z}$ and $\{\varphi_i\}_{i\in \Z}$ from (\ref{phi}) and (\ref{vphi}). 
By the conditions (i) and (ii) the parameters $\varphi_i\not=0$ and $\phi_i\not=0$ for all $1\leq i\leq d$. 
Let $L$ denote the square matrix indexed by $0,1,\ldots,d$ with $(i,j)$-entry as $L_{ij}$ for all $0\leq i,j\leq d$. 
By (\ref{lowertriangular}) the $(d+1)\times (d+1)$ matrix $L$ is lower triangular. By (\ref{Lij}) the diagonal entries of $L$ are 
$$
L_{ii}=
\prod_{h=1}^{d-i} \phi_h 
\prod_{h=1}^{i}
\varphi_i\not=0
\qquad 
(0\leq i\leq d).
$$
Therefore the matrix $L$ is nonsingular. Since $w$ is nonzero, at least one of $\{a_i\}_{i=0}^d$ is nonzero. Hence there exists an integer $i$ with $0\leq i\leq d$ such that 
$$
\sum_{j=0}^d L_{ij} a_j\not=0.
$$
Combined with (\ref{W}) and (\ref{RSw}) this yields that $v_0\in W$. Since the $\BI$-module $E_d(a,b,c)$ is generated by $v_0$ it follows that $W=E_d(a,b,c)$. The result follows. 
\end{proof}

\section{The isomorphism class of the $\BI$-module $E_d(a,b,c)$}\label{s:iso_class}

In Proposition \ref{prop:iso2} we saw that the $\BI$-module $E_d(a,b,c)$ is isomorphic to $E_d(-a,b,c)$. 
In this section we study the isomorphism class of the $\BI$-module $E_d(a,b,c)$ in further detail.

\begin{prop}\label{prop:iso1}
The $\BI$-module $E_d(a,b,c)$ is isomorphic to $E_d(a,b,-c)$.
\end{prop}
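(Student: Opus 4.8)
The plan is to mimic the proof of Proposition \ref{prop:iso2}, using the universal property of $M_d(a,b,c)$ from Proposition \ref{prop:universal_prop} together with the descent criterion of Proposition \ref{prop:E}. By Proposition \ref{prop:Ed}(i) there exists an $\F$-basis $\{u_i\}_{i=0}^d$ for $E_d(a,b,-c)$ with respect to which $X$ acts as the lower-bidiagonal matrix with diagonal entries $\theta_0,\theta_1,\ldots,\theta_d$ and subdiagonal entries $1$, while $Y$ acts as the upper-bidiagonal matrix with diagonal entries $\theta_0^*,\ldots,\theta_d^*$ and superdiagonal entries $\varphi_1,\ldots,\varphi_d$; note that the $\theta_i$ and $\theta_i^*$ do not involve $c$, and the $\varphi_i$ depend on $c$ only through $c^2$, so in fact the matrices for $X$ and $Y$ on $E_d(a,b,-c)$ coincide with those on $E_d(a,b,c)$. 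The task is therefore to exhibit a $\BI$-module homomorphism $E_d(a,b,c)\to E_d(a,b,-c)$ sending $v_0\mapsto u_0$; since it will carry the spanning vector $v_0$ to the cyclic generator $u_0$ and the source is generated by $v_0$, showing it is well defined already forces it to be an isomorphism once we observe it sends the basis $\{v_i\}$ to $\{u_i\}$.

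First I would check that $u_0\in E_d(a,b,-c)$ satisfies the hypotheses of Proposition \ref{prop:universal_prop} for the parameters $(a,b,c)$: namely $Yu_0=\theta_0^*u_0$ (clear from the upper-triangular shape of $Y$), $(Y-\theta_1^*)(X-\theta_0)u_0=\varphi_1 u_0$ (a one-line computation from the bidiagonal forms, giving $\varphi_1$ exactly because $\varphi_1$ is the same scalar for $E_d(a,b,c)$ and $E_d(a,b,-c)$), and $\kappa u_0=\omega u_0$, $\lambda u_0=\omega^* u_0$, $\mu u_0=\omega^\diamond u_0$. Here lies the one point that needs a sentence of care: by Proposition \ref{prop:Ed}(ii) the elements $\kappa,\lambda,\mu$ act on $E_d(a,b,-c)$ as scalar multiplication by $c^2-a^2-b^2+\frac{(d+1)^2}{4}$, $a^2-b^2-c^2+\frac{(d+1)^2}{4}$, $b^2-c^2-a^2+\frac{(d+1)^2}{4}$ respectively, and these are precisely $\omega,\omega^*,\omega^\diamond$ for the parameters $(a,b,c)$ because every occurrence of $c$ is squared. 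Hence Proposition \ref{prop:universal_prop} yields a unique $\BI$-module homomorphism $M_d(a,b,c)\to E_d(a,b,-c)$ sending $m_0\mapsto u_0$.

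Next I would verify that $\prod_{i=0}^d(X-\theta_i)u_0=0$ in $E_d(a,b,-c)$: by Lemma \ref{lem:mi} applied to the bidiagonal $X$-action, $\prod_{i=0}^d(X-\theta_i)u_0$ would be a would-be vector ``$u_{d+1}$'', but the space is only $(d+1)$-dimensional with basis $\{u_i\}_{i=0}^d$, and the explicit bidiagonal form shows $(X-\theta_d)u_d=0$, so the product vanishes. Then Proposition \ref{prop:E} promotes the homomorphism $M_d(a,b,c)\to E_d(a,b,-c)$ to a $\BI$-module homomorphism $E_d(a,b,c)\to E_d(a,b,-c)$ sending $v_0\mapsto u_0$. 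Since $v_i=\prod_{h=0}^{i-1}(X-\theta_h)v_0$ and $u_i=\prod_{h=0}^{i-1}(X-\theta_h)u_0$ by Proposition \ref{prop:Ed}(i), this homomorphism sends $v_i\mapsto u_i$ for all $0\le i\le d$, hence is a bijection, hence an isomorphism.

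I do not anticipate a real obstacle here; the proof is essentially bookkeeping. The only thing to get right is the observation that $X$, $Y$, $\kappa$, $\lambda$, $\mu$ all depend on $c$ only through $c^2$ (equivalently, $E_d(a,b,c)$ and $E_d(a,b,-c)$ literally have the same defining matrices), which is what makes the universal-property/descent machinery apply verbatim with unchanged scalars $\omega,\omega^*,\omega^\diamond$; indeed one could even phrase the proof more cheaply as: the matrices in Proposition \ref{prop:Ed}(i) for $E_d(a,b,c)$ and $E_d(a,b,-c)$ are identical, so the identity map on the underlying basis is a $\BI$-module isomorphism by Lemma \ref{lem:XYkappa}. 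I would present the argument in the style of Proposition \ref{prop:iso2} for uniformity with the surrounding text, but flag the cheaper route in a remark.
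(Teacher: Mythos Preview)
Your proposal is correct, but the paper takes exactly the ``cheaper route'' you relegate to a remark at the end. The paper's entire proof is: by Proposition~\ref{prop:Ed}(i) the matrices of $X$ and $Y$ on $E_d(a,b,c)$ and $E_d(a,b,-c)$ coincide (since $c$ enters only via $c^2$), and by Proposition~\ref{prop:Ed}(ii) so does the scalar for $\kappa$; then Lemma~\ref{lem:XYkappa} finishes. Your universal-property argument via Propositions~\ref{prop:universal_prop} and~\ref{prop:E} works fine and is in the spirit of Proposition~\ref{prop:iso2}, but here it is overkill: unlike the $a\mapsto -a$ case, where the matrix of $X$ genuinely changes and one must produce a nontrivial intertwiner, the $c\mapsto -c$ case leaves all defining data literally unchanged, so the identity on the basis already is the isomorphism. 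The paper reserves the heavier machinery for the cases (Propositions~\ref{prop:iso2} and~\ref{prop:iso3}) where it is actually needed.
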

\begin{proof}
By Proposition \ref{prop:Ed}(i) there are $\F$-bases for $E_d(a,b,c)$ and $E_d(a,b,-c)$ with respect to which the matrices representing $X$ and $Y$ are the same. By Proposition \ref{prop:Ed}(ii) the actions of $\kappa$ on $E_d(a,b,c)$ and $E_d(a,b,-c)$ are multiplication by the same scalar $\omega$. It follows from Lemma \ref{lem:XYkappa} that $E_d(a,b,c)$ is isomorphic to $E_d(a,b,-c)$. 
\end{proof}

\begin{prop}\label{prop:iso3}
If the $\BI$-module $E_d(a,b,c)$ is irreducible then the $\BI$-module $E_d(a,b,c)$ is isomorphic to $E_d(a,-b,c)$.
\end{prop}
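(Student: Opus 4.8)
The plan is to mimic the proof of Proposition \ref{prop:iso2}, but playing off the other triangular side of the matrix for $Y$. In Proposition \ref{prop:iso2} we used the $X$-lowering action to reach $E_d(-a,b,c)$; here we want to flip the sign of $b$, and since the roles of $X$ and $Y$ in $E_d(a,b,c)$ are (up to the shape of the off-diagonal entries) interchanged by the $\{\pm 1\}^2$-twist, the natural move is to set up a ``$Y$-cyclic'' version of the universal module $M_d(a,b,c)$ and deduce the analogue of Proposition \ref{prop:E} with the roles of $X$ and $Y$ swapped. Concretely, I would first record that there exists a $\BI$-module $M_d'(b,a,c)$ built from a basis on which $Y$ is lower triangular with diagonal $\{\theta_i^*\}$ and subdiagonal $1$'s, and $X$ is upper triangular with diagonal $\{\theta_i\}$ and superdiagonal $\{\psi_i\}$ for the appropriate parameters $\psi_i$; this is the same Poincar\'e--Birkhoff--Witt argument as Proposition \ref{prop:Verma}, using the automorphism of $\BI$ swapping $X\leftrightarrow Y$, $\kappa\leftrightarrow\kappa$, $\lambda\leftrightarrow\mu$ from Definition \ref{defn:BI}. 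Equivalently — and this is cleaner — one notes $E_d(a,b,c)$ carries, in the basis $\{v_i\}$, an involution swapping the $X$- and $Y$-pictures, so the ``$Y$-version'' of Propositions \ref{prop:universal_prop} and \ref{prop:E} is obtained for free by applying that automorphism.

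Granting the $Y$-version of Proposition \ref{prop:E}, the argument runs as follows. In $E_d(a,-b,c)$ pick the basis $\{u_i\}_{i=0}^d$ from Proposition \ref{prop:Ed}(i) (with $b$ replaced by $-b$). I want to produce a vector $u\in E_d(a,b,c)$ that is a ``$Y$-highest-weight'' vector of the right weight: i.e., $Xu=\theta_d'u$ where $\theta_d'=\frac{(-1)^d(2a-d+2d)}{2}$ adjusted to the $E_d(a,-b,c)$-normalization, satisfies the analogue of the relation $(X-\cdot)(Y-\theta_d)u=(\text{const})u$, and on which $\kappa,\lambda,\mu$ act by the scalars attached to $E_d(a,-b,c)$. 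But by Proposition \ref{prop:Ed}(ii) the scalars by which $\kappa,\lambda,\mu$ act depend on $b$ only through $b^2$, so they agree for $E_d(a,b,c)$ and $E_d(a,-b,c)$; this is the key compatibility that makes the sign flip possible. The candidate vector is the ``other end'' vector $v_d$ (or rather the image of the $Y$-lowering string applied to $v_0$, analogous to $w_i$ in \eqref{e:wi} but with $Y$ in place of $X$): on $v_d$ the generator $Y$ acts by $\theta_d^*$, and by the irreducibility hypothesis the $Y$-string $\prod(Y-\theta_h^*)$ applied to $v_0$ spans a basis, mirroring Lemma \ref{lem:iso2}. Running the $Y$-analogue of Proposition \ref{prop:E} then yields a nonzero $\BI$-module homomorphism $E_d(a,b,c)\to E_d(a,-b,c)$; since $E_d(a,b,c)$ is irreducible the map is injective, and since both modules have dimension $d+1$ it is an isomorphism.

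The main obstacle I anticipate is bookkeeping rather than conceptual: one must check that the off-diagonal parameters that appear when $Y$ is put in lower-triangular form on $E_d(a,-b,c)$ really do match the $Y$-lowering coefficients coming out of $E_d(a,b,c)$, i.e. the analogue of the identity \eqref{Li0}--\eqref{Lij} computing the transition. Here the subtlety is parity: the $\varphi_i$ in \eqref{vphi} have an even branch (independent of $a,b,c$) and an odd branch $c^2-\frac{(2a+2b-d+2i-1)^2}{4}$, and under $b\mapsto -b$ the odd branch becomes $c^2-\frac{(2a-2b-d+2i-1)^2}{4}$ — which is exactly the $\phi_i$ of \eqref{phi}. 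So the $Y$-reversal naturally introduces $\phi_i$ in place of $\varphi_i$, just as the $X$-reversal in Proposition \ref{prop:iso2} did; the irreducibility hypothesis guarantees none of these vanish, so the transition matrix is triangular with nonzero diagonal and the constructed map is nonzero. The only place irreducibility is genuinely needed is to upgrade ``nonzero homomorphism'' to ``isomorphism,'' exactly as in the parallel statements; absent it, one still gets a map but the source could fail to embed. I would therefore structure the proof as: (1) state the $Y$-dual of Propositions \ref{prop:universal_prop} and \ref{prop:E} via the $X\leftrightarrow Y$ automorphism; (2) identify $v_d$ (equivalently the top of the $Y$-lowering string on $v_0$) as a vector satisfying the dual hypotheses with the $E_d(a,-b,c)$-scalars, using that $\kappa,\lambda,\mu$ act by $b$-even scalars; (3) invoke the $Y$-dual of Proposition \ref{prop:E} to get $E_d(a,b,c)\to E_d(a,-b,c)$, check via the $\phi_i$-triangularity that it is nonzero, and conclude by irreducibility and dimension count.
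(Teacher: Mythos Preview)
Your plan has the right instincts---the $\kappa,\lambda,\mu$ actions depend only on $b^2$, and the swap $b\mapsto -b$ does convert $\varphi_i$ into $\phi_i$ on the odd branch---but the concrete candidate vector is wrong on both counts. First, $v_d$ is \emph{not} a $Y$-eigenvector: from the upper-triangular form in Proposition~\ref{prop:Ed}(i) one has $Yv_d=\theta_d^* v_d+\varphi_d v_{d-1}$, and $\varphi_d\neq 0$ precisely because the module is irreducible (Theorem~\ref{thm:irrE}). Second, the ``$Y$-lowering string applied to $v_0$'' collapses: since $v_0$ is already a $Y$-eigenvector, each factor $(Y-\theta_{d-h}^*)$ multiplies $v_0$ by the scalar $\theta_0^*-\theta_{d-h}^*$, so the whole string stays in $\F v_0$ and cannot produce a basis. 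Thus neither version of your candidate satisfies the hypotheses of any $Y$-dual universal property, and the map you hope to build is never set up.

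The paper avoids the dual-Verma machinery entirely. It works with the \emph{same} universal module $M_d(a,b,c)$ and instead looks for a vector in the \emph{target} $E_d(a,-b,c)$. The point is that in $E_d(a,-b,c)$ the $X$-matrix is unchanged (the $\theta_i$ depend only on $a$), while the $Y$-diagonal becomes $\theta_d^*,\theta_{d-1}^*,\ldots,\theta_0^*$ with superdiagonal $\phi_d,\ldots,\phi_1$. One then writes down an explicit $Y$-eigenvector $v=\sum_{i=0}^d\prod_{h=1}^i\frac{\theta_0^*-\theta_{d-h+1}^*}{\phi_{d-h+1}}\,u_i$ in $E_d(a,-b,c)$ with eigenvalue $\theta_0^*$; the denominators $\phi_i$ are nonzero by irreducibility (this is where the hypothesis enters), and the relation $(Y-\theta_1^*)(X-\theta_0)v=\varphi_1 v$ is then a direct check. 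Proposition~\ref{prop:universal_prop} gives $M_d(a,b,c)\to E_d(a,-b,c)$, the Cayley--Hamilton relation $\prod_i(X-\theta_i)v=0$ holds automatically since the $X$-matrix is unchanged, and Proposition~\ref{prop:E} factors the map through $E_d(a,b,c)$. So the fix to your outline is simply: look for the special vector on the $E_d(a,-b,c)$ side rather than in $E_d(a,b,c)$, and no dual construction is needed.
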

\begin{proof}
By Proposition \ref{prop:Ed}(i) there exists an $\F$-basis $\{u_i\}_{i=0}^d$ for $E_d(a,-b,c)$ with respect to which the matrices representing $X$ and $Y$ are 
\begin{gather}\label{XYEd(a,-b,c)}
\begin{pmatrix}
\theta_0 & & &  &{\bf 0}
\\ 
1 &\theta_1 
\\
&1 &\theta_2
 \\
& &\ddots &\ddots
 \\
{\bf 0} & & &1 &\theta_d
\end{pmatrix},
\qquad 
\begin{pmatrix}
\theta_d^* &\phi_d &  & &{\bf 0}
\\ 
 &\theta_{d-1}^* &\phi_{d-1}
\\
 &  &\theta_{d-2}^* &\ddots
 \\
 & & &\ddots &\phi_1
 \\
{\bf 0}  & & & &\theta_0^*
\end{pmatrix}
\end{gather}
respectively. Since the $\BI$-module $E_d(a,b,c)$ is irreducible, it follows from Theorem \ref{thm:irrE} that $\phi_i\not=0$ for all $1\leq i\leq d$. Hence we may set 
$$
v=\sum_{i=0}^d 
\prod_{h=1}^i
\frac{\theta_0^*-\theta_{d-h+1}^*}{\phi_{d-h+1}} u_i.
$$
A direct calculation yields that $Yv=\theta_0^* v$ and 
\begin{gather*}
(Y-\theta_1^*)(X-\theta_0) v
=\varphi_1 v.
\end{gather*}
By Proposition \ref{prop:Ed}(ii) the elements $\kappa,\lambda,\mu$ act on $E_d(a,-b,c)$ as scalar multiplication by $\omega,\omega^*,\omega^\diamond$ 
respectively. According to Proposition \ref{prop:universal_prop} there exists a unique $\BI$-module homomorphism $M_d(a,b,c)\to E_d(a,-b,c)$ that sends $m_0$ to $v$.  
Using (\ref{XYEd(a,-b,c)}) yields that 
\begin{gather*}
\prod_{i=0}^d (X-\theta_i)v=0.
\end{gather*}
By Proposition \ref{prop:E} there exists a $\BI$-module homomorphism 
\begin{gather}\label{E(b)->E(-b)}
E_d(a,b,c)\to E_d(a,-b,c)
\end{gather}
that sends $v_0$ to $v$. 
Since the $\BI$-module $E_d(a,b,c)$ is irreducible it follows from Theorem \ref{thm:irrE} that the $\BI$-module $E_d(a,-b,c)$ is irreducible. Hence the homomorphism (\ref{E(b)->E(-b)}) is onto. Since both of $E_d(a,b,c)$ and $E_d(a,-b,c)$ are of dimension $d+1$, it follows that (\ref{E(b)->E(-b)}) is an isomorphism. The proposition follows.
\end{proof}

We end this section with a brief summary of Propositions \ref{prop:iso2}, \ref{prop:iso1} and \ref{prop:iso3}.

\begin{thm}\label{thm:iso}
If the $\BI$-module $E_d(a,b,c)$ is irreducible then the $\BI$-module $E_d(a,b,c)$ is isomorphic to $E_d(-a,b,c)$, $E_d(a,-b,c)$ and $E_d(a,b,-c)$.
\end{thm}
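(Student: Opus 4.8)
The statement is nothing more than the conjunction of the three isomorphisms established separately in Propositions \ref{prop:iso2}, \ref{prop:iso1}, and \ref{prop:iso3}, so the plan is simply to invoke them in turn. Proposition \ref{prop:iso2} gives $E_d(a,b,c)\cong E_d(-a,b,c)$ without any irreducibility hypothesis; Proposition \ref{prop:iso1} gives $E_d(a,b,c)\cong E_d(a,b,-c)$, again unconditionally; and Proposition \ref{prop:iso3} supplies $E_d(a,b,c)\cong E_d(a,-b,c)$ under the standing assumption that $E_d(a,b,c)$ is irreducible. Concatenating these three statements yields the theorem verbatim, so the proof is a single sentence of citation.

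If one prefers a self-contained argument rather than a pointer, the uniform mechanism underlying all three cases is the universal property of $M_d(a,b,c)$ (Proposition \ref{prop:universal_prop}) together with Proposition \ref{prop:E}: in each case one exhibits in the target module $E_d(-a,b,c)$, $E_d(a,-b,c)$, or $E_d(a,b,-c)$ an explicit vector $v$ annihilated by $\prod_{i=0}^d (X-\theta_i)$ and satisfying the defining relations $Yv=\theta_0^*v$, $(Y-\theta_1^*)(X-\theta_0)v=\varphi_1 v$, $\kappa v=\omega v$, $\lambda v=\omega^* v$, $\mu v=\omega^\diamond v$; this produces a $\BI$-module homomorphism from $E_d(a,b,c)$, which is then an isomorphism because both modules have dimension $d+1$ and irreducibility of $E_d(a,b,c)$ (via Theorem \ref{thm:irrE}, which is invariant under each of the sign flips $a\mapsto -a$, $b\mapsto -b$, $c\mapsto -c$) forces the map to be onto. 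The only genuine computational content — choosing the right vector $v$ in each target basis and checking it is a highest-weight-type vector — is exactly what was carried out in the proofs of Propositions \ref{prop:iso2} and \ref{prop:iso3}; the sign flip in $c$ is even easier since by Proposition \ref{prop:Ed}(i) it does not change the matrices of $X$ and $Y$ at all.

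The main (and essentially only) subtlety worth flagging is the asymmetry in hypotheses: the isomorphisms $E_d(a,b,c)\cong E_d(-a,b,c)$ and $E_d(a,b,c)\cong E_d(a,b,-c)$ hold for all parameters, whereas $E_d(a,b,c)\cong E_d(a,-b,c)$ is proved only when $E_d(a,b,c)$ is irreducible, because that argument needs $\phi_i\neq 0$ for $1\le i\le d$ (Theorem \ref{thm:irrE}) both to define the intertwining vector $v$ and to conclude surjectivity. Since the theorem as stated already assumes irreducibility, there is no obstacle, and the proof reduces to the one-line citation of the three propositions.

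\begin{proof}
Immediate from Propositions \ref{prop:iso2}, \ref{prop:iso1} and \ref{prop:iso3}.
\end{proof}
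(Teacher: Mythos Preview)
Your proposal is correct and matches the paper's own treatment exactly: the paper presents Theorem \ref{thm:iso} as a one-line summary of Propositions \ref{prop:iso2}, \ref{prop:iso1}, and \ref{prop:iso3}, with no additional argument. Your surrounding discussion of the underlying mechanism and the asymmetry in hypotheses is accurate commentary but not needed for the proof itself.
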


\section{Proof of Theorem \ref{thm:even}}\label{s:classification_even}

In this section we are devoted to the proof of Theorem \ref{thm:even}.

\begin{lem}\label{lem:Schur}
Assume that $\F$ is algebraically closed. If $V$ is a finite-dimensional irreducible $\BI$-module, then each central element of $\BI$ acts on $V$ as scalar multiplication.
\end{lem}
\begin{proof}
Immediate from Schur's lemma.
\end{proof}

\begin{lem}\label{lem:theta}
For any $i\in \Z$ the following hold:
\begin{enumerate}
\item $\theta_{i+1}+\theta_{i-1}=-2\theta_i$.

\item $\theta_{i+1}\theta_{i-1}=(\theta_i-1)(\theta_i+1)$.
\end{enumerate}
\end{lem}
\begin{proof}
It is routine to verify the lemma using (\ref{theta}).
\end{proof}

\begin{thm}\label{thm:surjective}
Assume that $\F$ is algebraically closed with ${\rm char\,}\F=0$. If $V$ is a $(d + 1)$-dimensional irreducible $\BI$-module, then there exist $a, b, c \in \F$ and
$(\e, \e') \in \{\pm 1\}^2$ such that the $\BI$-module $E_d(a, b, c)^{(\e,\e')}$ is isomorphic to $V$.
\end{thm}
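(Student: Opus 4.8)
The plan is to produce, inside an arbitrary $(d+1)$-dimensional irreducible $\BI$-module $V$, a vector $v$ that satisfies the hypotheses of Proposition \ref{prop:universal_prop} for a suitable choice of parameters, and then use Proposition \ref{prop:E} to get a nonzero homomorphism $E_d(a,b,c)\to V$, which must be an isomorphism by irreducibility and dimension count. The first step is to pin down the eigenvalue data. By Lemma \ref{lem:Schur} the central elements $\kappa,\lambda,\mu$ act as scalars on $V$, say $\omega,\omega^*,\omega^\diamond$; since $\F$ is algebraically closed we may solve $\omega=c^2-a^2-b^2+\frac{(d+1)^2}{4}$, $\omega^*=a^2-b^2-c^2+\frac{(d+1)^2}{4}$, $\omega^\diamond=b^2-c^2-a^2+\frac{(d+1)^2}{4}$ for $a,b,c\in\F$ (these three equations determine $a^2,b^2,c^2$ and hence $a,b,c$ up to sign, which is exactly the freedom we will later absorb using Theorem \ref{thm:iso} and the $\{\pm1\}^2$-twists). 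Next I would analyze the action of $X$ on $V$. The key structural input is Lemma \ref{lem:theta}: relations (\ref{YYX})--(\ref{XXY}) force the spectrum of $X$ to be closed, in an appropriate sense, under the map $\theta\mapsto -2\theta-\theta'$ coming from $\theta_{i+1}+\theta_{i-1}=-2\theta_i$, so the eigenvalues of $X$ must form (part of) a string $\theta_0,\theta_1,\ldots,\theta_d$ of the form (\ref{theta}). One extracts an eigenvector $v$ of $X$ for the "extreme" eigenvalue $\theta_0$ — concretely, after normalizing, we can arrange via the $(\e,\e')$-twist that $X$ has eigenvalue $\theta_0=\frac{2a-d}{2}$ on $v$, $Y$ has eigenvalue $\theta_0^*=\frac{2b-d}{2}$, and one checks the mixed relation $(Y-\theta_1^*)(X-\theta_0)v=\varphi_1 v$ holds for the matching $c$; this is where relation (\ref{XXY}) together with the central-element values is used to convert a generic weight vector into one satisfying all of Proposition \ref{prop:universal_prop}'s defining equations.

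Once such a $v$ is found, Proposition \ref{prop:universal_prop} gives a homomorphism $M_d(a,b,c)\to V$ sending $m_0\mapsto v$. Because $V$ is $(d+1)$-dimensional and irreducible, and $\{m_i\}$ maps to an $X$-string in $V$, the image of $m_{d+1}=\prod_{i=0}^d(X-\theta_i)m_0$ must vanish (the string of $X$-weights has length at most $d+1$, so $\prod_{i=0}^d(X-\theta_i)$ annihilates $v$); hence by Proposition \ref{prop:E} we obtain a homomorphism $E_d(a,b,c)\to V$ with $v_0\mapsto v$. This map is nonzero, its image is a nonzero submodule of the irreducible $V$, hence all of $V$; since $\dim E_d(a,b,c)=d+1=\dim V$, it is an isomorphism. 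Finally, undoing the twist we used at the start gives $E_d(a,b,c)^{(\e,\e')}\cong V$, as required.

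The main obstacle is the middle step: showing that $V$ genuinely contains a vector $v$ satisfying \emph{all} of the normalizing equations $Yv=\theta_0^*v$, $(Y-\theta_1^*)(X-\theta_0)v=\varphi_1 v$, and the central ones simultaneously — equivalently, that after a suitable $\{\pm1\}^2$-twist the pair $(X,Y)$ acting on $V$ has exactly the "lowest-weight" shape built into $E_d(a,b,c)$. Controlling this requires a careful eigenvalue/eigenvector argument driven by Lemma \ref{lem:theta} and the defining relations (\ref{YYX})--(\ref{XXY}): one must argue that $X$ is "almost" a single Jordan string (its generalized eigenspace structure is forced), identify which endpoint of the $\theta$-string occurs, and rule out degenerate configurations by invoking irreducibility. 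I expect this to take the form of: diagonalize or Jordan-decompose $X$; use the relations to show the $Y$-action shifts $X$-eigenspaces along the string; pick a generator at the end of the string; then verify the two non-central normalization identities by a direct computation using the scalar values of $\kappa,\lambda,\mu$. The twist parameter $(\e,\e')$ and the sign ambiguity in $(a,b,c)$ are precisely the bookkeeping needed so that the endpoint we land on matches the one hard-coded in Proposition \ref{prop:Ed}.
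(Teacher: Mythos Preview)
Your overall architecture---find a distinguished vector, invoke Proposition \ref{prop:universal_prop}, then Proposition \ref{prop:E}, then use irreducibility and dimension count---matches the paper, but the middle step as you have written it has a genuine gap.

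You propose to find a single vector $v$ that is simultaneously an $X$-eigenvector for $\theta_0$ and a $Y$-eigenvector for $\theta_0^*$. Such a $v$ need not exist: the point of Examples \ref{exam:E} and \ref{exam:O} is precisely that $X$ and $Y$ may fail to be diagonalizable on $V$, let alone share a common eigenvector with the prescribed eigenvalues. Note also that Proposition \ref{prop:universal_prop} does \emph{not} ask for $Xv=\theta_0 v$; it asks for $Yv=\theta_0^* v$ together with $(Y-\theta_1^*)(X-\theta_0)v=\varphi_1 v$. The paper therefore works with \emph{two} vectors: a vector $u\in V^{(\e,\e')}_X(\theta_0)$ chosen to be an eigenvector of $(X-\theta_1)Y$ restricted to that eigenspace (one first shows this operator preserves $V^{(\e,\e')}_X(\theta_0)$ using (\ref{XXY}) and Lemma \ref{lem:theta}), and a vector $w\in V^{(\e,\e')}_Y(\theta_0^*)$ chosen analogously. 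The $Y$-raised string $\{u_i\}$ is used to identify the characteristic polynomial of $X$ as $\prod_{i=0}^d(x-\theta_i)$, which is what gives $\prod_{i=0}^d(X-\theta_i)w=0$; the $X$-raised string $\{w_i\}$ is a basis with respect to which $X$ is lower bidiagonal and $Y$ is upper triangular. The crucial identity $(Y-\theta_1^*)(X-\theta_0)w=\varphi_1 w$ is \emph{not} obtained by ``matching $c$'': the superdiagonal entries $\varphi_i'$ of $Y$ in the $\{w_i\}$-basis satisfy a three-term recurrence (obtained by applying (\ref{XXY}) to $w_{i-1}$ and comparing coefficients of $w_i$), and since ${\rm char\,}\F=0$ this recurrence with boundary values $\varphi_0'=\varphi_{d+1}'=0$ has the unique solution $\varphi_i'=\varphi_i$.

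A related issue is the order of determining the parameters. You solve for $a,b,c$ from the scalars $\omega,\omega^*,\omega^\diamond$ at the outset; but then you must still \emph{prove} that the $X$-spectrum is $\{\theta_i\}_{i=0}^d$ for this $a$, and that is the hard part. The paper proceeds in the opposite order: it extracts $a$ and $\e$ from the $X$-spectrum (by locating an eigenvalue whose predecessor in the $\vartheta$-string is not an eigenvalue), extracts $b$ and $\e'$ from the $Y$-spectrum likewise, defines $c$ so that $\kappa$ acts by $\omega$, and only afterward \emph{verifies} that $\lambda$ and $\mu$ act by $\omega^*$ and $\omega^\diamond$ using (\ref{YYX}) and the analogous computation for $u$. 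Your final paragraph correctly identifies this as the main obstacle, but the sketch there (``diagonalize or Jordan-decompose $X$; show $Y$ shifts eigenspaces; pick a generator at the end'') does not yet supply the missing mechanism.
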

\begin{proof}
Given any scalar $\alpha \in \F$ we define 
$$
\vartheta_i(\alpha)
=(-1)^i(\alpha+i)
\qquad
\hbox{for all $i\in \Z$}.
$$
Since $\F$ is algebraically closed and $V$ is finite-dimensional, there exists an eigenvalue $\alpha$ of $X$ in $V$. Since ${\rm char\,}\F=0$, for any distinct integers $i,j$ the scalars $\vartheta_i(\alpha)$ and $\vartheta_j(\alpha)$ are equal if and only if $i+j=-2\alpha$. Since there are at most $d+1$ distinct eigenvalues of $X$ in $V$, there exists an integer $j$ such that $\vartheta_j(\alpha)$ is an eigenvalue of $X$ but $\vartheta_{j-1}(\alpha)$ is not  an eigenvalue of $X$ in $V$. Set 
$$
\varepsilon=(-1)^j,
\qquad 
a=\alpha+j+\frac{d}{2}.
$$
Similarly, there are a scalar $\beta\in \F$ and an integer $k$ such that $\vartheta_k(\beta)$ is an eigenvalue of $Y$ but $\vartheta_{k-1}(\beta)$ is not an eigenvalue of $Y$ in $V$. We set 
$$
\varepsilon'=(-1)^k,
\qquad 
b=\beta+k+\frac{d}{2}.
$$
Under the settings we have 
\begin{eqnarray}
\varepsilon \theta_i &=& \vartheta_{i+j}(\alpha) 
\qquad 
\hbox{for all $i\in \Z$},
\label{setting:a&e}
\\
\varepsilon' \theta_i^* &=& \vartheta_{i+k}(\beta)
\qquad 
\hbox{for all $i\in \Z$}.
\label{setting:b&e'}
\end{eqnarray}
By Lemma \ref{lem:Schur} the element $\kappa$ acts on $V^{(\e,\e')}$ as scalar multiplication. Since $\F$ is algebraically closed there exists a scalar $c\in \F$ such that the action of $\kappa$ on $V^{(\e,\e')}$ is the scalar multiplication by 
$$
\omega=c^2-a^2-b^2+\frac{(d+1)^2}{4}.
$$
To prove the theorem, it suffices to show that there exists a $\BI$-module isomorphism from $E_d(a,b,c)$ into $V^{(\e,\e')}$.

 Given any element $S\in \BI$ and any $\theta\in \F$ we let 
$$
V^{(\e,\e')}_S(\theta)=\{v\in V^{(\e,\e')}\,|\, Sv=\theta v\}.
$$
Pick any $v\in V^{(\e,\e')}_X(\theta_0)$. 
Applying $v$ to (\ref{XXY}) yields that 
\begin{gather}\label{XXY_V(theta)}
(X^2+2\theta_0 X+\theta_0^2-1) Y v=(2\theta_0\omega+\mu)v.
\end{gather}
By Lemma \ref{lem:theta} the left-hand side of (\ref{XXY_V(theta)}) is equal to 
$$
(X-\theta_{-1})(X-\theta_1) Y v.
$$
Left multiplying either side of (\ref{XXY_V(theta)}) by $(X-\theta_0)$ we obtain that 
$$
(X-\theta_{-1})(X-\theta_0)(X-\theta_1) Y v=0.
$$
By Table \ref{pm1-action} and (\ref{setting:a&e}) the scalar $\theta_{-1}$ is not an eigenvalue of $X$ in $V^{(\e,\e')}$. It follows that 
$$
(X-\theta_0)(X-\theta_1) Y v=0.
$$ 
In other words 
$
(X-\theta_1) Y v\in V^{(\e,\e')}_X(\theta_0)$. 
This shows that $V^{(\e,\e')}_X(\theta_0)$ is invariant under $(X-\theta_1) Y$. Since $\F$ is algebraically closed there exists an eigenvector $u$ of $(X-\theta_1) Y$ in $V^{(\e,\e')}_X(\theta_0)$. Similarly, there exists an eigenvector $w$ of $(Y-\theta_1^*)X$ in $\in V^{(\e,\e')}_Y(\theta_0^*)$. Define 
\begin{eqnarray}
u_i&=&\prod_{h=0}^{i-1} (Y-\theta_h^*) u
\qquad 
\hbox{for all $i\in \N$},
\label{vi}
\\
w_i&=&\prod_{h=0}^{i-1} (X-\theta_h) w
\qquad 
\hbox{for all $i\in \N$}.
\label{wi}
\end{eqnarray}

We proceed by induction to show that 
\begin{gather}\label{claim}
(X-\theta_i)u_i\in {\rm span}_\F\{u_0,u_1,\ldots,u_{i-1}\}
\qquad 
\hbox{for all $i\in \N$}.
\end{gather}
Since $u$ is an eigenvector of $(X-\theta_1) Y$ in $V^{(\e,\e')}_X(\theta_0)$, the claim is true for $i=0,1$. Now suppose that $i\geq 2$. Applying $u_{i-2}$ to (\ref{YYX}) we obtain that 
\begin{gather}\label{YYXvi-2}
(Y^2 X +2 YX Y +X Y^2 - X 
-2\omega Y )u_{i-2}
=
\lambda u_{i-2}.
\end{gather}
By Lemma \ref{lem:Schur} the right-hand side of (\ref{YYXvi-2}) is a scalar multiple of $u_{i-2}$. Applying induction hypothesis and (\ref{vi}) the left-hand side of (\ref{YYXvi-2}) is equal to 
\begin{gather}\label{YYXvi-2:LH'}
(\theta_{i-2} +2\theta_{i-1}+X)u_i
\end{gather}
plus an $\F$-linear combination of $u_0,u_1,\ldots,u_{i-1}$. By Lemma  \ref{lem:theta}(i) the term (\ref{YYXvi-2:LH'}) is equal to $(X-\theta_i) u_i$. Combining the above comments, the result (\ref{claim}) follows. 
Next, we show that $\{u_i\}_{i=0}^d$ is an $\F$-basis for $V^{(\e,\e')}$. Suppose on  the contrary that there is an integer $j$ with $0\leq j\leq d-1$ such that $u_{j+1}$ is an $\F$-linear combination of $u_0,u_1,\ldots,u_j$.
Let $W$ denote the $\F$-subspace of $V$ spanned by $u_0,u_1,\ldots,u_j$. 
Observe that $W$ is $Y$-invariant by (\ref{vi}) and $X$-invariant by (\ref{claim}).  It follows from Lemma \ref{lem:XYkappa} that $W$ is a nonzero $\BI$-submodule of $V^{(\e,\e')}$. Since the $\BI$-module $V^{(\e,\e')}$ is irreducible this implies that $W=V^{(\e,\e')}$. However $W$ is of dimension less than or equal to $d$, which contradicts that the dimension of $V^{(\e,\e')}$ is $d+1$. Therefore $\{u_i\}_{i=0}^d$ is an $\F$-basis for $V^{(\e,\e')}$. 
By similar arguments we have  
\begin{gather}\label{claim'}
(Y-\theta_i^*) w_i\in {\rm span}_\F\{w_0,w_1,\ldots,w_{i-1}\}
\qquad 
\hbox{for all $i\in \N$}
\end{gather}
and 
$\{w_i\}_{i=0}^d$ is an $\F$-basis for $V^{(\e,\e')}$.

By (\ref{claim}) the matrix representing $X$ with respect to the $\F$-basis $\{u_i\}_{i=0}^d$ for $V^{(\e,\e')}$ is an upper triangular matrix in which the diagonal entries are $\{\theta_i\}_{i=0}^d$. 
Applying the Cayley-Hamilton theorem yields that 
\begin{gather}\label{CH}
\prod\limits_{i=0}^d (X-\theta_i) w_0=0.
\end{gather}
Hence $Xw_d=\theta_dw_d$ by (\ref{wi}) and the matrix representing $X$ with respect to the $\F$-basis $\{w_i\}_{i=0}^d$ for $V^{(\e,\e')}$ is  
\begin{gather*}
\begin{pmatrix}
\theta_0 & & &  &{\bf 0}
\\ 
1 &\theta_1 
\\
&1 &\theta_2
 \\
& &\ddots &\ddots
 \\
{\bf 0} & & &1 &\theta_d
\end{pmatrix}.
\end{gather*}
By (\ref{claim'}) the matrix representing $Y$ with respect to the $\F$-basis $\{w_i\}_{i=0}^d$ for $V^{(\e,\e')}$ is upper triangular with diagonal entries $\{\theta_i^*\}_{i=0}^d$ and let $\{\varphi_i'\}_{i=1}^d$ denote its superdiagonal entries as follows:
\begin{gather*}
\begin{pmatrix}
\theta_0^* &\varphi_1' &  & &{\bf *}
\\ 
 &\theta_1^* &\varphi_2'
\\
 &  &\theta_2^* &\ddots
 \\
 & & &\ddots &\varphi_d'
 \\
{\bf 0}  & & & &\theta_d^*
\end{pmatrix}.
\end{gather*}
Applying $w_{i-1}$ to either side of (\ref{XXY}) and comparing the coefficients of $w_i$ we obtain that 
\begin{gather}\label{varphi'}
\varphi_{i+1}'+2\varphi_i'+\varphi_{i-1}'
=2\omega-(3\theta_i+\theta_{i-1})\theta_i^*-(\theta_i+3\theta_{i-1})\theta_{i-1}^*
\qquad 
(1\leq i\leq d),
\end{gather}
where $\varphi_0'$ and $\varphi_{d+1}'$ are interpreted as zero. It is straightforward to verify that $\varphi_i'=\varphi_i$ for all $1\leq i\leq d$ satisfy the recurrence relation (\ref{varphi'}). Since ${\rm char\,}\F=0$ the corresponding homogeneous recurrence relation 
$$
\sigma_{i+1}+2\sigma_i+\sigma_{i-1}=0
\qquad 
(1\leq i\leq d)
$$
with the initial values $\sigma_0=0$ and $\sigma_{d+1}=0$ has the unique solution $\sigma_i=0$ for all $0\leq i\leq d+1$. Thus $\varphi_i'=\varphi_i$ for all $1\leq i\leq d$. So far we have seen that  
\begin{gather}
Yw_0=\theta_0^* w_0,
\label{even:U1} 
\\
(Y-\theta_1^*)(X-\theta_0)w_0=\varphi_1 w_0,
\label{even:U2}
\\
\kappa w_0=\omega w_0.
\label{even:U3}
\end{gather}

Applying $w_0$ to either side of (\ref{YYX}) and simplifying the resulting equation by (\ref{even:U2}), it yields that 
\begin{gather}\label{even:U4}
\lambda w_0=\omega^* w_0.
\end{gather}
Similarly we have $\mu u_0=\omega^\diamond u_0$. It follows from Lemma \ref{lem:Schur} that 
\begin{gather}\label{even:U5}
\mu w_0=\omega^\diamond w_0.
\end{gather}
In light of (\ref{even:U1})--(\ref{even:U5}), it follows from Proposition \ref{prop:universal_prop} that there exists a unique $\BI$-module homomorphism $M_d(a,b,c)\to V^{(\e,\e')}$ that sends $m_0$ to $w_0$. Combined with (\ref{CH}) there exists a $\BI$-module homomorphism 
\begin{gather}\label{E->V}
E_d(a,b,c)\to V^{(\e,\e')}
\end{gather}
that sends $v_0$ to $w_0$ by Proposition \ref{prop:E}. 
Since the $\BI$-module $V^{(\e,\e')}$ is irreducible the homomorphism (\ref{E->V}) is onto. Since both of $E_d(a,b,c)$ and $V^{(\e,\e')}$ are of dimension $d+1$ it follows that (\ref{E->V}) is an isomorphism. The result follows.
\end{proof}

\begin{lem}\label{lem:trace}
For any $(\e,\e')\in \{\pm 1\}^2$ the traces of $X$ and $Y$
on the $\BI$-module $E_d(a,b,c)^{(\e,\e')}$ are 
$$
-\e\cdot \frac{d+1}{2}, 
\qquad 
-\e'\cdot \frac{d+1}{2},
$$ 
respectively.
\end{lem}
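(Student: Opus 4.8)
The plan is to compute the traces directly from the explicit matrix description of $X$ and $Y$ on $E_d(a,b,c)$ given in Proposition~\ref{prop:Ed}(i), and then track how the twist by $(\e,\e')$ rescales them. On $E_d(a,b,c)$ itself the matrix of $X$ is lower triangular with diagonal entries $\theta_0,\theta_1,\ldots,\theta_d$, so ${\rm tr}\,X=\sum_{i=0}^d\theta_i$; likewise the matrix of $Y$ is upper triangular with diagonal entries $\theta_0^*,\ldots,\theta_d^*$, so ${\rm tr}\,Y=\sum_{i=0}^d\theta_i^*$. By Table~\ref{pm1-action}, the automorphism $(\e,\e')$ sends $X\mapsto \e X$ and $Y\mapsto \e' Y$ (reading off the first two columns: $X$ picks up the sign in the first coordinate, $Y$ the sign in the second), so on the twisted module $E_d(a,b,c)^{(\e,\e')}$ the element $X$ acts as $X$ acted times $\e$ and $Y$ times $\e'$; hence the traces are $\e\sum_{i=0}^d\theta_i$ and $\e'\sum_{i=0}^d\theta_i^*$.

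It then remains to evaluate the sums. Using $\theta_i=\tfrac{(-1)^i(2a-d+2i)}{2}$ (with $\delta=d$), the terms pair up: for any $i$ with $0\le i\le d$, since $d$ is odd, the index $d-i$ has opposite parity to $i$, and $\theta_i+\theta_{d-i}=\tfrac{(-1)^i(2a-d+2i)}{2}+\tfrac{(-1)^{d-i}(2a-d+2(d-i))}{2} = \tfrac{(-1)^i}{2}\big((2a-d+2i)-(2a+d-2i)\big)=(-1)^i(2i-d)$. Summing $\theta_i$ over $i=0,\ldots,d$ and using this pairing (the $d+1$ indices split into $(d+1)/2$ pairs $\{i,d-i\}$), one gets $\sum_{i=0}^d\theta_i=-\tfrac{d+1}{2}$; the $a$-dependence cancels, as it must. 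The same computation with $b$ in place of $a$ gives $\sum_{i=0}^d\theta_i^*=-\tfrac{d+1}{2}$. Combining with the previous paragraph yields ${\rm tr}\,X=-\e\cdot\tfrac{d+1}{2}$ and ${\rm tr}\,Y=-\e'\cdot\tfrac{d+1}{2}$ on $E_d(a,b,c)^{(\e,\e')}$, which is the claim.

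There is no real obstacle here; the only point requiring a moment's care is the sign bookkeeping for the twist (confirming from Table~\ref{pm1-action} that $X^{(\e,\e')}=\e X$ and $Y^{(\e,\e')}=\e' Y$) and the telescoping/pairing evaluation of $\sum_{i=0}^d\theta_i$, which relies crucially on $d$ being odd so that the $d+1$ summands pair off perfectly. Alternatively one can evaluate $\sum_{i=0}^d(-1)^i(2a-d+2i)$ as a finite alternating sum term by term, which is equally routine; I would present the pairing argument as it makes the cancellation of $a$ (resp.\ $b$) transparent.
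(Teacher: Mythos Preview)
Your proposal is correct and follows exactly the route the paper intends: the paper's own proof is simply ``It is straightforward to verify the lemma using Proposition~\ref{prop:Ed}(i) and Table~\ref{pm1-action},'' and your computation of $\sum_{i=0}^d\theta_i=\sum_{i=0}^d\theta_i^*=-\tfrac{d+1}{2}$ together with the observation $X^{(\e,\e')}=\e X$, $Y^{(\e,\e')}=\e' Y$ is precisely that verification spelled out.
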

\begin{proof}
It is straightforward to verify the lemma using Proposition \ref{prop:Ed}(i) and Table \ref{pm1-action}. 
\end{proof}

\begin{lem}\label{lem:central}
The elements $\kappa+\mu,\lambda+\kappa,\mu+\lambda$ act on the $\BI$-module $E_d(a,b,c)$ as scalar multiplication by 
\begin{gather*}
-2
\left(
a+\frac{d+1}{2}
\right)
\left(
a-\frac{d+1}{2}
\right),
\\
-2
\left(
b+\frac{d+1}{2}
\right)
\left(
b-\frac{d+1}{2}
\right),
\\
-2
\left(
c+\frac{d+1}{2}
\right)
\left(
c-\frac{d+1}{2}
\right),
\end{gather*}
respectively. 
\end{lem}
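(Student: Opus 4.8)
The plan is simply to read off the scalar actions of the central elements $\kappa,\lambda,\mu$ on $E_d(a,b,c)$ from Proposition~\ref{prop:Ed}(ii) and combine them in pairs. Since $\kappa,\lambda,\mu$ act as scalar multiplications on $E_d(a,b,c)$, so do $\kappa+\mu$, $\lambda+\kappa$, $\mu+\lambda$, and the corresponding scalars are obtained by adding the relevant pairs of eigenvalues. No representation-theoretic input is needed here; in particular one need not invoke irreducibility or Lemma~\ref{lem:XYkappa}.

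Concretely, Proposition~\ref{prop:Ed}(ii) states that $\kappa$, $\lambda$, $\mu$ act on $E_d(a,b,c)$ as multiplication by $c^2-a^2-b^2+\frac{(d+1)^2}{4}$, $a^2-b^2-c^2+\frac{(d+1)^2}{4}$, and $b^2-c^2-a^2+\frac{(d+1)^2}{4}$, respectively. Adding the first and third yields $-2a^2+\frac{(d+1)^2}{2}$; adding the second and first yields $-2b^2+\frac{(d+1)^2}{2}$; adding the third and second yields $-2c^2+\frac{(d+1)^2}{2}$. Each of these I would then rewrite via the elementary factorization $-2t^2+\frac{(d+1)^2}{2}=-2\bigl(t+\frac{d+1}{2}\bigr)\bigl(t-\frac{d+1}{2}\bigr)$, applied with $t=a$, $t=b$, $t=c$ in turn, which gives exactly the three claimed expressions.

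The main (and essentially only) point to watch is the bookkeeping: matching the pairings $\kappa+\mu$, $\lambda+\kappa$, $\mu+\lambda$ with the correct variables $a$, $b$, $c$, which is forced by the cyclic pattern of the three eigenvalues in Proposition~\ref{prop:Ed}(ii). Alternatively the same three identities could be verified directly from the presentation in Lemma~\ref{lem:BIpresentation}, but that route is strictly more work than the one above.
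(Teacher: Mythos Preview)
Your proposal is correct and follows exactly the paper's approach: the paper's own proof simply says it is straightforward to verify the lemma using Proposition~\ref{prop:Ed}(ii), and you have spelled out that verification in full. Your arithmetic and the matching of the pairings $\kappa+\mu,\lambda+\kappa,\mu+\lambda$ to $a,b,c$ are accurate.
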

\begin{proof}
It is straightforward to verify the lemma using Proposition \ref{prop:Ed}(ii). 
\end{proof}

\noindent {\it Proof of Theorem \ref{thm:even}.}
By Theorems \ref{thm:irrE} and \ref{thm:iso} the function $\mathcal E$ is well-defined.
By Theorem \ref{thm:surjective} the function $\mathcal E$ is surjective.
Since any element of $\BI$ has the same trace and any central element of $\BI$ acts as the same scalar on the isomorphic finite-dimensional irreducible $\BI$-modules, it follows from Lemmas \ref{lem:trace} and \ref{lem:central} that $\mathcal E$ is injective. The result follows.
\hfill $\square$

\subsection*{Acknowledgements}

The research is supported by the Ministry of Science and Technology of Taiwan under the project MOST 106-2628-M-008-001-MY4.

\bibliographystyle{amsplain}
\bibliography{MP}
\end{document}